\documentclass[11pt]{amsart}

\usepackage{latexsym}
\usepackage{amsmath}
\usepackage{amsfonts}
\usepackage{amssymb}
\usepackage{amsthm}
\voffset0cm \hoffset-1.9cm \topskip=0pt \textwidth=1.25 \textwidth
\textheight=1.36\textwidth \topmargin=.2in

 \pagestyle{plain}

  \newcommand{\LE}{\mathcal{LE}}
   \renewcommand{\H}{\mathcal{H}}

\newcommand{\gG}{\Gamma}
\newcommand{\C}{\mathbb{C}}
\newcommand{\T}{\mathbb{T}}
\newcommand{\Q}{\mathbb{Q}}
\newcommand{\R}{\mathbb{R}}
\newcommand{\E}{\mathbb{E}}
\newcommand{\N}{\mathbb{N}}

\newcommand{\Z}{\mathbb{Z}}

\newcommand{\norm}[1]{\left\Vert #1\right\Vert}

\theoremstyle{plain}
\newtheorem{theorem}{Theorem}[section]
\newtheorem{lemma}[theorem]{Lemma}
\newtheorem{proposition}[theorem]{Proposition}

\newtheorem*{theoremA'}{Theorem A'}
\newtheorem*{theoremB'}{Theorem B'}
\newtheorem*{theoremC'}{Theorem C'}

\newtheorem*{theorem*}{Theorem}

\newtheorem{corollary}[theorem]{Corollary}
\newtheorem*{conjecture}{Conjecture}

\theoremstyle{definition}

\newtheorem{example}[theorem]{Example}

\theoremstyle{remark}

\newtheorem*{remark}{Remark}
\newtheorem*{remarks}{Remarks}

\begin{document}

\title{Equidistribution of sparse sequences on nilmanifolds}
\author{Nikos
  Frantzikinakis}
\address[Nikos  Frantzikinakis]{Department of Mathematics\\
  University of Memphis\\
  Memphis, TN \\ 38152 \\ USA } \email{frantzikinakis@gmail.com}

\begin{abstract}
We study equidistribution properties of nil-orbits $(b^nx)_{n\in\N}$
when the parameter $n$ is restricted to the range of some sparse sequence
 that is not necessarily polynomial. For example, we show that if $X=G/\Gamma$ is a
  nilmanifold, $b\in G$
 is an ergodic nilrotation, and $c\in \R\setminus \Z$ is positive,  then the sequence $(b^{[n^c]}x)_{n\in\N}$
 is equidistributed in  $X$ for every $x\in X$. This is also the case when  $n^c$ is replaced with $a(n)$, where $a(t)$
 is a function that belongs to some Hardy field, has
 polynomial growth,  and stays logarithmically away from polynomials,
  and when it  is replaced with a random  sequence of  integers with sub-exponential growth.
  Similar results have been established by  Boshernitzan when  $X$ is the circle.
\end{abstract}

\thanks{The  author was partially supported by NSF grant
  DMS-0701027.}

\subjclass[2000]{Primary: 22F30; Secondary: 37A17}

\keywords{Homogeneous space, nil{\bf man}if{\bf ol}d,  equid{\bf is}tribution, Hardy field. }


\maketitle

\setcounter{tocdepth}{1}
\tableofcontents

\section{Introduction and main results}

\subsection{Motivation}
A nilmanifold  is a homogeneous space $X=G/\Gamma$ where  $G$ is a nilpotent Lie group,
and $\Gamma$ is a discrete cocompact subgroup of $G$.
For $b\in G$  and  $x=g\Gamma\in X$ we  define $bx=(bg)\Gamma$.
In recent years it has become clear that studying equidistribution properties of
nil-orbits $(b^nx)_{n\in\N}$, and their subsequences,   is a central problem, with applications
to various areas
of mathematics
 that include  combinatorics (\cite{BHK}, \cite{Z2},
\cite{FK2}, \cite{Fr2},  \cite{BLL}, \cite{FLW}, \cite{FW2}),
  ergodic theory
 (\cite{CL88},  \cite{HK1}, \cite{Z1},  \cite{HK2}, \cite{L3},
 \cite{FK}, \cite{FK1}, \cite{HK3}), number theory
  (\cite{AGH}, \cite{Haa92}, \cite{BL}, \cite{GT1}),  and probability theory (\cite{Fr1}).

It is well known that for every $b\in G$ and $x\in X$ the sequence  $(b^nx)_{n\in\N}$
is equidistributed in some nice algebraic set  (\cite{Ra}, \cite{Les}, \cite{L2}), and
this  is also the case when  the  parameter $n$ is restricted to the range of some polynomial
with integer coefficients (\cite{S}, \cite{L2}), or the set of prime numbers (\cite{GT1}).
 Furthermore, very recently,  quantitative equidistribution results for  polynomial nil-orbits
  have been established (\cite{GT}) and used as part of an ongoing project
  to find
  asymptotics for the number of arithmetic progressions in the set of prime numbers
  (\cite{GT-1}).
  These quantitative estimates  will also play a crucial  role in the present article.

The main objective of this article is to  study equidistribution properties of nil-orbits
$(b^nx)_{n\in\N}$ when the  parameter $n$ is restricted to some sparse sequence of integers
 that is not necessarily polynomial.
For example,  we shall show  that if $b\in G$ has a dense orbit  in $X$, meaning $\overline{(b^n\Gamma)}_{n\in\N}=X$,    then for
every $x\in X$ the sequences
\begin{equation}\label{E:examples}
(b^{[n^{\sqrt{3}}]}x)_{n\in\N}, \quad (b^{[n \log n]}x)_{n\in\N}, \quad (b^{[n^{2}\sqrt{2}+n\sqrt{3}]}x)_{n\in\N},
\quad (b^{[n^{3}+(\log{n})^2]}x)_{n\in\N}, \quad (b^{[(\log(n!))^k]}x)_{n\in\N},
\end{equation}
are  all equidistributed in $X$. Furthermore, using a probabilistic construction we shall exhibit
examples of sequences with super-polynomial growth for which analogous equidistribution results hold
(explicit such examples are  not known). Let us remark at this point, that since we shall work with sparse sequences
of times taken along sequences whose range has typically negligible intersection with the range of polynomial sequences,  our results cannot be immediately deduced from known equidistribution results along polynomial sequences.

We shall also study equidistribution properties involving several nil-orbits. For example, suppose that  $c_1,c_2,\ldots,c_k$ are distinct non-integer real numbers,
all greater than $1$,  and $b\in G$ has a dense orbit in $X$. We shall show that the sequence
$$
(b^{[n^{c_1}]}x_1,b^{[n^{c_2}]}x_2,\ldots, b^{[n^{c_k}]}x_k)_{n\in\N}
$$
is equidistributed in $X^k$ for every $x_1,x_2,\ldots,x_k\in X$.

In a nutshell, our approach is to use the Taylor expansion of a function $a(t)$ to partition the range of
the sequence $([a(n)])_{n\in\N}$
into approximate polynomial blocks of fixed degree, in such a way that
 one can give useful quantitative estimates for the corresponding ``Weyl type'' sums. In order to carry out this plan,
  we found it very  helpful to deal with the sequence $(a(n))_{n\in\N}$ first, thus leading us to study   equidistribution
  properties of the
  sequence $(b^{a(n)}x)_{n\in\N}$, where $b^s$ for $s\in\R$  can be defined appropriately.

Before giving the exact results, let us also mention that an additional motivation
for our study
 is  the various potential applications in ergodic theory and combinatorics.
 This direction of research  has already proven fruitful; very recently  in \cite{FW2}  equidistribution results on
  nilmanifolds played a  key role in establishing a Hardy field refinement of Szemer\'edi's theorem
  on arithmetic progressions
  and related multiple recurrence results in ergodic theory.
 However, in  that article, equidistribution properties involving only  conveniently chosen subsequences
  of the sequences in question  were studied. The  problem of studying
  equidistribution properties  of the full range of non-polynomial sequences, like those in  \eqref{E:examples},
  is more delicate, and is addressed for the first time in the present article. This turns out to be a
   crucial step   towards
 an in depth  study of  the limiting behavior of multiple ergodic averages of
 the form
 $$
 \frac{1}{N}\sum_{n=1}^N T^{[a_1(n)]}f_1\cdot \ldots\cdot T^{[a_\ell(n)]}f_\ell,
 $$
 where $(a_i(n))_{n\in\N}$ are real valued sequences that satisfy some
 regularity conditions. The remaining steps of this project  will be completed
 in a forthcoming paper (\cite{Fr3}).

 \subsection{Equidistribution results}\label{SS:equidistribution}
Throughout the article we
are going to  work with the class of real valued functions $\H$ that
belong to some Hardy field (see Section~\ref{SS:Hardy} for details).
 Working within the class  $\H$  eliminates several
 technicalities  that would otherwise obscure the transparency of  our results and the main ideas of their proofs.
  Furthermore,  $\H$
 is a rich enough class to enable one to  deal, for example, with  all the sequences considered in \eqref{E:examples}.

In various places we
  evaluate an element $b$ of a connected and simply connected nilpotent Lie group $G$
on some real power $s$. In Section~\ref{SS:nil} we explain why this operation is legitimate.


 When writing $a(t)\prec b(t)$ we mean $a(t)/b(t)\to 0$ as $t\to +\infty$. When writing $a(t)\ll b(t)$ we mean
  that $|a(t)|\leq C |b(t)|$ for some constant $C$ for every large $t$. We also say that a function $a(t)$
 has \emph{polynomial growth} if $a(t)\prec t^k$ for some $k\in \N$.

\subsubsection{A single nil-orbit}
If $(a(n))_{n\in\N}$ is a sequence of real numbers, and $X=G/\Gamma$ is a nilmanifold, with $G$  connected
 and simply connected, we say that the sequence $(b^{a(n)}x)_{n\in\N}$ is \emph{equidistributed} in a sub-nilmanifold $X_b$ of $X$,
 if for every $F\in C(X)$ we have
$$
\lim_{N\to\infty}\frac{1}{N}\sum_{n=1}^N F(b^{a(n)}x)=\int F\ dm_{X_b}
$$
where $m_{X_b}$ denotes the normalized Haar measure on $X_b$.
Similarly, if the sequence $(a(n))_{n\in\N}$ has integer values,  we can  define
a notion of equidistribution on every nilmanifold $X$, without imposing any connectedness assumption on $G$ or $X$.

A sequence $(a(n))_{n\in\N}$ of real numbers is \emph{pointwise good for nilsystems}
if for every  nilmanifold $X=G/\Gamma$, where $G$ is connected and simply connected,
and  every $b\in G$, $x\in X$, the sequence
$(b^{a(n)}x)_{n\in\N}$ \emph{has a limiting distribution}, meaning,
for every $F\in C(X)$ the limit
$
\lim_{N\to\infty}\frac{1}{N}\sum_{n=1}^N F(b^{a(n)}x)
$
exists.

We remark  that for sequences $(a(n))_{n\in\N}$ with integer values,
the connectedness assumptions
 of the previous definition are superficial.
   Using the lifting argument of  Section~\ref{SS:nil}, one  sees that  if a sequence of integers $(a(n))_{n\in\N}$ is pointwise good for nilsystems,   then for every  nilmanifold $X=G/\Gamma$, $b\in G$, and $x\in X$, the sequence
$(b^{a(n)}x)_{n\in\N}$ has a limiting distribution.

Our first result
 gives necessary and sufficient conditions for Hardy sequences of polynomial growth to be pointwise good
 for nilsystems.
\begin{theorem}\label{T:A}
Let $a\in \H$ have polynomial growth.

 Then the  sequence $(a(n))_{n\in\N}$ (or the sequence  $([a(n)])_{n\in\N}$)
is pointwise good for nilsystems if and only if one of the following conditions holds:
\begin{itemize}
 \item   $|a(t)-cp(t)|\succ \log t$ for every $c\in\R$ and  every $p\in \Z[t]$; \text{ or }

 \item $a(t)-cp(t)\to d$ for some $c,d\in\R$ and some $p\in\Z[t]$; \text{ or }

 \item $|a(t)-t/m|\ll \log{t}$ for some $m\in\Z$.
\end{itemize}
\end{theorem}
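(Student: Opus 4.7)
\medskip

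The plan is to handle the two implications separately, with sufficiency treated by splitting into the three listed cases, since they represent genuinely different types of behavior.

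For sufficiency under condition (1), which is the main case and yields full equidistribution, I would proceed as follows. On a block $[N, N+H]$, Taylor expansion gives $a(N+n) = \sum_{j=0}^{k}\frac{a^{(j)}(N)}{j!}n^j + R(N,n)$ where $k$ is chosen so that $a^{(k+1)}(t)\to 0$ fast enough (since $a\in\H$ has polynomial growth, this is possible), and the remainder satisfies $R(N,n)\ll H^{k+1} a^{(k+1)}(N)$. Choosing $H = H(N)$ slowly growing relative to $N$, the remainder is $o(1)$, so $b^{a(N+n)}x$ is uniformly close on the block to the polynomial nil-orbit $b^{p_N(n)}x$ with real coefficients $\alpha_j(N) = a^{(j)}(N)/j!$. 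Then I would invoke the Green-Tao quantitative equidistribution theorem for polynomial orbits on nilmanifolds: either the block average is $\varepsilon$-close to $\int F\,dm_X$, or there is a horizontal character $\chi$ and an integer polynomial $q$ such that $\|\chi\circ\alpha_j(N) - q_j\|$ is small in an appropriate sense. Condition (1) is engineered so that no such arithmetic resonance can persist on a positive density of blocks: indeed, the $\alpha_j(N)$ are essentially the derivatives of $a$, and approximating them by rational multiples of integer polynomials would force $a(t)$ to be close to some $cp(t)$ on a large scale, violating the logarithmic separation hypothesis. Summing over blocks and averaging then gives equidistribution of $(b^{a(n)}x)_{n\in\N}$ in $X_b$; the integer-part version $([a(n)])$ follows by a separate but analogous argument handling the fractional parts, using that $b^s$ is continuous in $s$ on connected simply connected $G$.

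For sufficiency under condition (2), $a(t) = cp(t) + d + o(1)$ with $p\in\Z[t]$, so $b^{a(n)}x = b^{d+o(1)}(b^c)^{p(n)}x$; since $p(n)$ is an integer-valued polynomial, Leibman's equidistribution theorem for polynomial nil-orbits gives a limiting distribution, and the $o(1)$ factor is absorbed by continuity of $F$. For sufficiency under condition (3), write $a(t) = t/m + e(t)$ with $|e(t)|\ll\log t$; then $b^{a(n)} = (b^{1/m})^n\cdot b^{e(n)}$ is the orbit of the element $b^{1/m}$ perturbed by a slowly varying factor, and the Cesaro averages converge because the orbit of $b^{1/m}$ has a limiting distribution (it is a standard integer-translate action) and the perturbation $b^{e(n)}$ changes by $o(1)$ on most consecutive indices so its effect on the Cesaro average can be absorbed via a standard van der Corput/uniform continuity argument.

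For the necessity direction, assuming none of (1)--(3) holds, I would construct a failure of convergence already on the torus $X = \T$. Testing against the character $F(x) = e(\alpha x)$ reduces convergence of $\frac{1}{N}\sum F(b^{a(n)}x)$ to convergence of the exponential sum $\frac{1}{N}\sum e(\alpha a(n))$, for which Boshernitzan has characterized the convergent Hardy functions. The three listed conditions then correspond exactly to Boshernitzan's classification: (1) gives Weyl equidistribution mod $1$ of $\alpha a(n)$ for every irrational $\alpha$; (2) reduces to $\alpha a(n)$ being asymptotically a polynomial sequence and appeals to Weyl's polynomial theorem; (3) gives the borderline rational-slope linear case. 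The negation of all three produces an $\alpha$ for which the exponential sum oscillates and fails to converge.

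The main obstacle is the sufficiency of condition (1): carrying out the Taylor-plus-quantitative-equidistribution argument requires choosing the block length $H(N)$ and the Taylor degree $k$ simultaneously to make both the remainder negligible and the Green-Tao quantitative bounds nontrivial, and then converting the logarithmic separation hypothesis into the precise non-resonance statement about the coefficients $a^{(j)}(N)/j!$ that the Green-Tao theorem consumes. This calibration, together with verifying that the arithmetic obstruction provided by the Green-Tao dichotomy actually forces $a(t)$ to approximate some $cp(t)$ within $O(\log t)$, is the technical heart of the argument and where I expect the majority of the work to lie.
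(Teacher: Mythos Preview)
Your proposal is essentially correct and follows the same overall strategy as the paper: Taylor expansion plus the Green--Tao quantitative equidistribution theorem for condition (1), Leibman's theorem for condition (2), and reduction to the circle for necessity. Two points of execution are worth flagging, however.

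First, your passage from $(a(n))$ to $([a(n)])$ is underspecified. The paper does not run a ``separate but analogous argument''; instead it proves the real-valued case on connected simply connected $G$ and then deduces the integer-part case on arbitrary nilmanifolds via a lifting trick (Lemma~\ref{L:reduction}): one embeds $X$ into $\tilde X=\T\times X$ with $\tilde b=(1,b)$, so that $F(b^{[a(n)]}\Gamma)=\tilde F(\tilde b^{a(n)}\tilde\Gamma)$ for a function $\tilde F$ which is discontinuous only on a null set, and the equidistribution of $(a(n))$ in the $\T$-factor controls the fractional parts. This device is what actually makes the integer-part deduction work, and it is not a routine continuity argument.

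Second, for necessity you invoke irrational circle rotations and attribute the convergence classification to Boshernitzan. Boshernitzan's theorem (Theorem~\ref{T:Bos} here) classifies \emph{equidistribution} of $(a(n))\bmod 1$, not convergence of Ces\`aro averages; the convergence characterization you need is due to Boshernitzan--Kolesnik--Quas--Wierdl, and the paper's proof in fact uses \emph{rational} circle rotations, where failure of (1)--(3) is detected by the residue distribution of $[a(n)]\bmod q$ failing to stabilize. Your sketch for condition (3) is also loose: the perturbation $b^{e(n)}$ with $e(n)\ll\log n$ need not be absorbed by a bare uniform-continuity argument (it can wander unboundedly), and the paper simply reduces this case to $a(t)=t$ via the argument in \cite{BKQW}.
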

\begin{remarks}
$\bullet$ The necessity of these conditions can be  seen using
rotations on the circle (see \cite{BKQW}).
In the  case were $X=\T$ their sufficiency was established in \cite{BKQW}.


$\bullet$  Unlike the case of integer polynomial sequences, if $p\in \R[t]$, then
the sequence  $(b^{[p(n)]}x)_{n\in\N}$ may not be equidistributed in a finite union of sub-nilmanifolds  of $X$. For example, when $X=\T (=\R/\Z)$,
the sequence $(-[n\sqrt{2}]/\sqrt{2}\, \Z)_{n\in\N}$
is equidistributed
in  the set $\big\{ t\Z\colon \{t\}\in [0,1/\sqrt{2}]\big\}$.
\end{remarks}
It seems sensible to assert that the first condition in Theorem~\ref{T:A} is satisfied by the ``typical'' function in $\H$ with polynomial growth. It turns out that in this ``typical'' case,
restricting the  parameter $n$ of a  nil-orbit $(b^n\Gamma)_{n\in\N}$  to the range of  the sequence
$([a(n)])_{n\in\N}$, does not change its limiting distribution:
\begin{theorem}\label{T:B}
Let $a\in\H$ have  polynomial growth and satisfy   $|a(t)-cp(t)|\succ \log t$ for every $c\in \R$ and
$p\in \Z[t]$.

$(i)$ If     $X=G/\Gamma$ is a nilmanifold, with $G$ connected and simply connected, then   for every
$b\in G$ and $x\in X$
the sequence $(b^{a(n)}x)_{n\in\N}$ is equidistributed in the nilmanifold
 $ \overline{(b^sx)}_{s\in \R}$.

$(ii)$ If   $X=G/\Gamma$ is a nilmanifold, then for every  $b\in G$ and   $x\in X$ the
  sequence $(b^{[a(n)]}x)_{n\in\N}$ is equidistributed in the nilmanifold $ \overline{(b^nx)}_{n\in \N}$.
 \end{theorem}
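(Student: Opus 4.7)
I would treat part $(i)$ first and deduce $(ii)$ from it. Replacing $X$ by $X_b=\overline{(b^sx)}_{s\in\R}$, I may assume the one-parameter orbit of $b$ is dense in $X$, so the target becomes the Haar measure on $X$. By Weyl's criterion in the Leibman form for nilmanifolds, it suffices to show that for every non-trivial horizontal character $\eta\colon X\to\T$,
\[
\frac{1}{N}\sum_{n=1}^{N} e\bigl(\eta(b^{a(n)}x)\bigr)\xrightarrow[N\to\infty]{}0,
\]
and, inductively along the central series, to verify the analogous cancellation for vertical characters of sub-nilfactors.

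The driving mechanism is a \emph{Taylor block decomposition}. Fix $k\in\N$ with $a(t)\prec t^k$ and partition $\{1,\dots,N\}$ into consecutive blocks $I_j=(N_j,N_j+H]$ of length $H=H(N)\to\infty$, chosen slowly enough that
\[
e_j(h):=a(N_j+h)-q_j(h),\qquad q_j(h):=\sum_{i=0}^{k}\frac{a^{(i)}(N_j)}{i!}\,h^i,
\]
satisfies $\sup_{j,\,h\leq H}|e_j(h)|\to 0$; since $a\in\H$ has polynomial growth, asymptotic differentiation in the Hardy field forces $a^{(k+1)}(t)$ to decay at a rate sufficient for this. Continuity of $s\mapsto b^sy$ then yields $F(b^{a(N_j+h)}x)=F(b^{q_j(h)}x)+o(1)$ uniformly, so up to negligible error the average reduces to
\[
\frac{H}{N}\sum_{j}\frac{1}{H}\sum_{h=1}^{H}F\bigl(b^{q_j(h)}x\bigr).
\]

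I would then apply the quantitative equidistribution theorem of Green--Tao to each inner polynomial nil-average. Their dichotomy says: either the inner sum is $o(1)$, or some horizontal character applied to $q_j$ has all non-constant coefficients close, in the smoothness-norm sense, to rationals with small denominator. The \emph{main obstacle} is to rule out the second alternative on all but a negligible proportion of blocks. The strategy is that if the bad alternative held on a positive-density set of $j$'s in $[1,N]$, then, by piecing together the Diophantine information supplied by the Taylor coefficients $a^{(i)}(N_j)/i!$ (using closure of $\H$ under differentiation and the asymptotic comparability of Hardy functions), one could approximate $a(t)$ to within $o(\log t)$ by some $c\,p(t)$ with $c\in\R$, $p\in\Z[t]$, contradicting the hypothesis $|a(t)-cp(t)|\succ\log t$. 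This is where the precise logarithmic threshold enters and is the most delicate step.

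For $(ii)$, the lifting of Section~\ref{SS:nil} lets me assume $b^s$ is defined for $s\in\R$ inside a connected, simply connected cover. Writing $b^{[a(n)]}=b^{a(n)}\cdot b^{-\{a(n)\}}$ and expanding the $1$-periodic function $s\mapsto F(b^{-\{s\}}\,\cdot\,)$ as an absolutely convergent Fourier series $\sum_m c_m(\cdot)\,e(ms)$, the average becomes a sum over $m\in\Z$ of Birkhoff-type averages of $e(m\,a(n))\,c_m(b^{a(n)}x)$. Each such average is a nilsequence along $a(n)$ on the product nilmanifold $X\times\T$ equipped with the one-parameter flow $(b^s,ms)$, and part $(i)$ applied there, together with Sobolev decay of $c_m$ for smooth $F$, identifies the limit as the Haar integral of $F$ over $\overline{(b^nx)}_{n\in\N}$.
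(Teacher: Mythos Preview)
Your overall architecture---Taylor-block approximation followed by the Green--Tao quantitative equidistribution theorem for part $(i)$, and a passage through the product $X\times\T$ for part $(ii)$---is exactly the paper's. But there are real gaps in both parts.

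\textbf{Part $(i)$.} Two missing ingredients make your ``main obstacle'' step unworkable as stated. First, you never reduce to the case where $b$ acts \emph{ergodically} on $X$. The paper does this explicitly (Lemma~\ref{L:ergodic}): one finds $s_0\in\R$ with $b^{s_0}$ ergodic on $\overline{(b^s\Gamma)}_{s\in\R}$ and replaces $a$ by $a/s_0$. Ergodicity is what guarantees that every nontrivial horizontal character $\chi$ satisfies $\chi(b^t)=e(t\beta)$ with $\beta\neq 0$; without it the Green--Tao obstruction could be vacuous. Second, and more seriously, your plan to ``piece together the Diophantine information'' from many bad blocks to conclude $|a(t)-cp(t)|\not\succ\log t$ is not how the argument goes and is not substantiated. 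The paper's route is far more direct: one first uses the Hardy-field hypothesis to extract a specific index $m$ with $1/t^m\prec a^{(m)}(t)\prec 1$ and $(a^{(m+1)})^m\prec (a^{(m)})^{m+1}$ (Lemma~\ref{L:cvb}); then one chooses the block length $l(N)$ so that simultaneously $(l(N))^{m+1}|a^{(m+1)}(N)|\to 0$ (Taylor error vanishes) and $(l(N))^m|a^{(m)}(N)|\to\infty$. Since $a^{(m)}(N)\to 0$, the Green--Tao bad alternative forces $(l(N))^m|a^{(m)}(N)\beta|\leq M$ for some nonzero $\beta$ in a fixed finite set, which immediately contradicts the second growth condition for large $N$. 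No reconstruction of $a$ from block data is needed, and this is where the logarithmic threshold actually enters (through Lemma~\ref{L:cvb}). Your proposal also does not separate out the residual case $a(t)=p(t)+e(t)$ with $p\in\R[t]$ not of the form $cq+d$, $q\in\Z[t]$, and $e(t)\ll\log t$, which the paper handles by a slightly different argument (Lemma~\ref{L:uio2}). Finally, the ``induction along the central series via vertical characters'' is unnecessary: the whole point of the Green--Tao theorem is that failure of equidistribution is detected by \emph{horizontal} characters alone.

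\textbf{Part $(ii)$.} Your Fourier-expansion idea is morally the paper's Lemma~\ref{L:reduction}, but the claim that $s\mapsto F(b^{-\{s\}}y)$ has an absolutely convergent Fourier series is false: this function has a jump at $s\in\Z$ (its one-sided limits are $F(y)$ and $F(b^{-1}y)$), so its Fourier coefficients decay only like $1/m$ regardless of how smooth $F$ is on $X$. The paper circumvents this by working on $\tilde X=(\R\times G)/(\Z\times\Gamma)$ with $\tilde F(t\Z,g\Gamma)=F(b^{-\{t\}}g\Gamma)$, approximating $\tilde F$ by continuous $\tilde F_\delta$ that agree with it off a $\delta$-neighborhood of $\{0\}\times X$, and using the equidistribution of $(a(n)\Z)$ in $\T$ (a consequence of part $(i)$) to bound the resulting error by $O(\delta)$.
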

\begin{remark}
Suppose that  we  want the  conclusion $(i)$ (or $(ii)$)  to be true only for some
fixed  $b\in G$. Then our proof shows that the assumption can be relaxed to the following:
$|a(t)-cp(t)|\succ \log t$ for every $p\in \Z[t]$,  and
every $c\in \R$ of the form $q/\beta$ where $q\in \Q$ and $\beta$ is some non-zero eigenvalue for
the nilrotation by $b$ (this means $f(bx)=e(\beta)f(x)$ \text{ for some } non-constant $f\in L^2(m_X)$).
 A special case of this stronger result  (take $G=\R$, $\Gamma=\Z$,  and $b=1$)
  gives one of the main  results in \cite{Bo2}, stating that if $a\in \H$ has polynomial
  growth and satisfies $|a(t)-p(t)|\succ \log{t}$  for every $p\in\Q[t]$, then the sequence $(a(n)\Z)_{n\in\N}$ is equidistributed in $\T$.
\end{remark}

 \subsubsection{Several nil-orbits}
 We give an equidistribution result involving nil-orbits of several Hardy sequences.
 We say that the functions $a_1(t),\ldots,a_\ell(t)$  have \emph{different growth rates} if the quotient of any two of these functions  converges  to $\pm \infty$ or to $0$.

\begin{theorem}\label{T:several}
Suppose that the functions  $a_1(t),\ldots,a_{\ell}(t)$ belong to the same Hardy field,  have different growth rates, and satisfy
 $t^{k_i}\log{t}\prec a_i(t)\prec t^{k_i+1} $ for some  $ak_i\in \N$.

$(i)$ If   $X_i=G_i/\Gamma_i$ are  nilmanifolds, with $G_i$ connected and simply connected,
then for every  $b_i\in G_i$ and $x_i\in X_i$,
the sequence $$
({b}^{a_1(n)}_1x_1,\ldots,{b}^{a_{\ell}(n)}_{\ell}x_{\ell})_{n\in\N}
$$ is equidistributed in the nilmanifold
 $
 \overline{(b^s_1x_1)}_{s\in \R}\times \cdots\times \overline{(b^s_{\ell}x_\ell)}_{s\in \R}.
 $

$(ii)$ If  $X_i=G_i/\Gamma_i$ are  nilmanifolds, then   for every  $b_i\in G_i$, and
 $x_i\in X_i$,
the sequence $$
({b}^{[a_1(n)]}_1x_1,\ldots,{b}^{[a_{\ell}(n)]}_{\ell}x_{\ell})_{n\in\N}
$$ is equidistributed in the nilmanifold
$
 \overline{(b^n_1x_1)}_{n\in \N}\times \cdots\times \overline{({b}^n_{\ell}x_{\ell})}_{n\in \N}.
 $
\end{theorem}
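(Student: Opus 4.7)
The plan is to reduce part (ii) to part (i) via the lifting construction of Section~\ref{SS:nil}, which allows the integer-part orbit $b_i^{[a_i(n)]}x_i$ to be accessed through the real-parameter orbit $b_i^{a_i(n)}$ on a larger nilmanifold. For (i), replace each $X_i$ by the sub-nilmanifold $Y_i := \overline{(b^s_i x_i)}_{s\in\R}$; the one-parameter flow $s \mapsto b^s_i$ then has dense orbit through $x_i$ on $Y_i$. By Stone--Weierstrass and Weyl's criterion it suffices to show that for any tensor product $F = F_1 \otimes \cdots \otimes F_\ell$ of continuous functions with $\int F_{i_0}\,dm_{Y_{i_0}} = 0$ for some $i_0$, the Ces\`aro average $\tfrac{1}{N}\sum_{n=1}^N\prod_i F_i(b_i^{a_i(n)} x_i)$ tends to zero.

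Next I would follow the Taylor expansion strategy advertised in the introduction, adapted to the product setting. Partition $\{1,\dots,N\}$ into consecutive windows $W_j = [jH, (j+1)H)$ of length $H = H_N$, chosen as a slowly growing power of $N$. For $n = jH + h$ with $h \in [0, H)$, Taylor-expand $a_i(n) = p_i^{(j)}(h) + E_i^{(j)}(h)$, where $p_i^{(j)}$ is the degree-$k_i$ Taylor polynomial of $a_i$ at $jH$ and $|E_i^{(j)}(h)| \ll H^{k_i+1}\sup_{t\in W_j}|a_i^{(k_i+1)}(t)|$. The Hardy-field bound $a_i(t) \prec t^{k_i+1}$ forces $a_i^{(k_i+1)}(t) \to 0$ at a quantifiable rate, so $H_N$ can be chosen so that every $E_i^{(j)}$ is uniformly $o(1)$. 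Continuity of the $F_i$ then allows one to replace $a_i(n)$ by $p_i^{(j)}(h)$ on each window up to $o(1)$ loss, reducing the problem to understanding a polynomial nil-orbit on the product nilmanifold $Y_1\times\cdots\times Y_\ell$ window by window.

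Finally I would apply the quantitative Green--Tao equidistribution theorem for polynomial orbits on the product nilmanifold: on each window, either the orbit $(b_i^{p_i^{(j)}(h)} x_i)_i$ is $o(1)$-equidistributed on $Y_1\times\cdots\times Y_\ell$, or some non-trivial integer character of its projection to the horizontal torus is close to rational, with the leading-order content governed by expressions of the form $\sum_i c_i\beta_i\, a_i^{(k_i)}(jH)/k_i!$ for non-zero integer vectors $(c_i)$ and eigenvalues $\beta_i$ of the respective nilrotations. The main obstacle is showing that the density of such ``bad'' windows tends to zero. This is where the distinct growth rates hypothesis becomes decisive: since the $a_i$ lie in the same Hardy field with distinct growth rates and each satisfies $t^{k_i}\log t \prec a_i \prec t^{k_i+1}$, the derivatives $a_i^{(k_i)}$ also lie in the Hardy field and one summand asymptotically dominates, yielding the lower bound
$$
\Big| \sum_i c_i\beta_i\, a_i^{(k_i)}(t) - p(t) \Big| \succ \log t
$$
for every non-zero integer vector $(c_i)$ and every $p \in \Z[t]$. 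Feeding this bound into Boshernitzan's circle equidistribution theorem (as in the remark after Theorem~\ref{T:B}) shows that the bad windows occupy $o(1)$ density as $N \to \infty$, which combined with the equidistribution on good windows yields the required vanishing.
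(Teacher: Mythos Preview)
Your overall plan---reduce (ii) to (i), pass to the flow-orbit closures, Taylor-expand on short windows, and invoke the quantitative Green--Tao theorem---mirrors the paper's, but the analysis of the ``bad'' windows contains a genuine error.  The quantity you call the ``leading-order content'', namely $\sum_i c_i\beta_i\, a_i^{(k_i)}(jH)/k_i!$, does not correspond to any single coefficient of the polynomial $h\mapsto \sum_i c_i\beta_i\, p_i^{(j)}(h)$: each $a_i^{(k_i)}(jH)/k_i!$ is the coefficient of $h^{k_i}$, and when the $k_i$'s differ these land in \emph{different} powers of $h$.  Your subsequent claim that ``one summand asymptotically dominates'' in $\sum_i c_i\beta_i\, a_i^{(k_i)}(t)$ is therefore unfounded, and in fact false.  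Take $a_1(t)=t^{3/2}$ and $a_2(t)=t^{5/2}$, so $k_1=1$, $k_2=2$; then $a_1'(t)=\tfrac32 t^{1/2}$ and $a_2''(t)=\tfrac{15}{4}t^{1/2}$ have \emph{identical} growth, and for suitable $c_i,\beta_i$ the sum vanishes identically.  Distinct growth of the $a_i$ does not force distinct growth of the $a_i^{(k_i)}$ once the $k_i$'s differ.

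The paper avoids this by extracting from the $C^\infty[R]$-norm the coefficient of $r^k$ for one well-chosen degree $k$ (the largest $k_i$ among those factors on which the horizontal character is nontrivial); that coefficient is $\sum_{i:\,k_i=k}\beta_i\, a_i^{(k)}(Rn)/k!$, a sum ranging only over functions of the \emph{same} type $k^+$, whose $k$-th derivatives then genuinely have pairwise distinct growth in the window $\log t\prec\cdot\prec t$, so Boshernitzan's theorem applies.  There is also a structural difference worth noting: the paper does not let the window length grow with $N$.  It fixes $R$, writes $\E_{1\le n\le RN}=\E_{1\le n\le N}\E_{1\le r\le R}$, Taylor-expands in $r$, and sends $N\to\infty$ \emph{before} $R\to\infty$ (and then $\varepsilon,\delta\to 0$).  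This decoupling lets ordinary equidistribution of $n\mapsto\sum_{i\in I}\beta_i a_i^{(k)}(Rn)$ bound the bad-set density by $2\varepsilon$ for each fixed $R$; your growing-$H_N$ scheme would instead require equidistribution of $(b(jH_N))_{j\le N/H_N}$ at the shrinking scale $M/H_N^k$, a uniformity that Boshernitzan's theorem does not supply.
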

\begin{remark}

The preceding  result contrasts the case of polynomial sequences, where different growth does not imply
 simultaneous equidistribution for the corresponding nil-orbits. For example,
  there exists a connected nilmanifold $X=G/\Gamma$ and an  ergodic element $b\in G$,
 such that the sequence $(b^n\Gamma,b^{n^2}\Gamma)_{n\in\N}$ is not even dense in $X\times X$
(see \cite{FK}). On the other hand, our result shows that if for instance $a(t)=t^{\sqrt{2}}$, then for every nilmanifold $X$ and ergodic element $b\in G$ the sequence   $(b^{[a(n)]}\Gamma,b^{[(a(n))^2]}\Gamma)_{n\in\N}$
 is equidistributed in $X\times X$.
\end{remark}

It may very well be  the case that the hypothesis of Theorem~\ref{T:several} can be relaxed to give a much stronger result.
The following is a closely related conjecture:
\begin{conjecture}\label{C:B'}
Let  $a_1(t),\ldots,a_{\ell}(t)$ be functions that belong to the same Hardy field and have polynomial growth.
Suppose further that every non-trivial linear combination
$a(t)$ of these functions satisfies $|a(t)-cp(t)|\succ \log{t}$ for every $c\in \R$ and $p\in \Z[t]$.

Then for every nilmanifold $X=G/\Gamma$, $b_i\in G$, and
 $x_i\in X$,
the sequence $$
({b}^{[a_1(n)]}_1x_1,\ldots,{b}^{[a_{\ell}(n)]}_{\ell}x_{\ell})_{n\in\N}
$$ is equidistributed in the nilmanifold
$
 \overline{(b^n_1x_1)}_{n\in \N}\times \cdots\times \overline{({b}^n_{\ell}x_{\ell})}_{n\in \N}.
 $
\end{conjecture}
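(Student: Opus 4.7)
The plan is to reduce the problem to a quantitative equidistribution question on the product nilmanifold $Y := X_1 \times \cdots \times X_\ell$, and then exploit the Taylor-expansion strategy emphasized in the introduction. First, by the standard lifting device discussed in Section~\ref{SS:nil}, I would pass from the integer-part statement to its real-exponent analogue: work inside a connected and simply connected cover so that each $b_i^s$ is defined for $s\in\R$, and reduce to showing that $g(n)\cdot(x_1,\ldots,x_\ell)$ is equidistributed in the product of closed $\R$-orbits, where $g(t) := (b_1^{a_1(t)}, \ldots, b_\ell^{a_\ell(t)})$ is viewed as a smooth map $\R \to G_1\times\cdots\times G_\ell$. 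The target orbit closure coincides with $\prod_i \overline{(b_i^s x_i)_{s\in\R}}$, which in the integer-parameter case collapses onto $\prod_i\overline{(b_i^n x_i)_{n\in\N}}$.

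Next, I would partition $\{1,\ldots,N\}$ into consecutive blocks of length $H=H(M)$ chosen so that each $a_i$ is uniformly approximated on $[M,M+H)$ by its Taylor polynomial $p_{i,M}(h)$ of the appropriate degree $k_i$, with error $o(1)$ as $M\to\infty$; the Hardy-field and polynomial-growth assumptions ensure such an $H\to\infty$ exists. On each block, $g(M+h)$ is uniformly close to the honest polynomial sequence $P_M(h) := (b_1^{p_{1,M}(h)},\ldots,b_\ell^{p_{\ell,M}(h)})$ in $G_1\times\cdots\times G_\ell$. I would then invoke the quantitative Green--Tao equidistribution theorem for polynomial sequences on nilmanifolds, as in the analysis underlying Theorem~\ref{T:B}. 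If the time averages of a fixed $F\in C(Y)$ fail to converge to the Haar integral, then for a non-negligible fraction of blocks $M$ there must exist a horizontal character $\eta$ of $Y$ of bounded complexity such that every non-constant Taylor coefficient of $\eta\circ P_M$ lies within $o(1)$ of a rational with small denominator. Pigeonholing over the finitely many low-complexity characters produces a single $\eta$ that persists across many blocks.

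The third and crucial step is to derive a contradiction from the standing hypothesis. Any horizontal character of $Y$ decomposes as $\eta(g_1,\ldots,g_\ell) = \sum_i \eta_i(g_i)$ with $\eta_i$ a horizontal character of $X_i$; setting $\alpha_i := \eta_i(b_i)\in\R$, the degree-$d$ Taylor coefficient of $\eta\circ P_M$ is, up to lower-order corrections from the nilpotent commutation, $\sum_i \alpha_i a_i^{(d)}(M)/d!$. The Green--Tao smallness condition therefore forces every such weighted derivative combination to be close to rationals of bounded height on a large set of $M$. An iterated integration argument, in the spirit of Boshernitzan's theorem cited in the remark after Theorem~\ref{T:B}, then shows that the Hardy function $a(t) := \sum_i \alpha_i a_i(t)$ itself satisfies $|a(t) - cp(t)| = O(\log t)$ for some $c\in\R$ and $p\in\Z[t]$; since $\eta$ is non-trivial, the tuple $(\alpha_i)$ is non-zero and $a$ is a non-trivial linear combination, contradicting the hypothesis. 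The main obstacle I anticipate is precisely this third step: converting the quantitative, character-by-character smallness information about derivatives back into the sharp qualitative Hardy-field statement about $a$ itself, uniformly across all low-complexity $\eta$. A secondary technical difficulty arises when several $a_i$ share a leading growth rate, so that the Taylor polynomials $p_{i,M}$ have matching top degrees: one must carefully track which coefficient of $\eta\circ P_M$ carries the dominant information and rule out spurious cancellations among the $\alpha_i a_i^{(d)}(M)$, which is exactly the scenario that the linear-combination hypothesis of the conjecture is designed to preclude.
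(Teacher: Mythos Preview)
The statement you are attempting to prove is labelled \emph{Conjecture} in the paper, not a theorem: the paper does not supply a proof, and explicitly leaves it open. What the paper does prove is the special case (Theorem~\ref{T:several}) in which the $a_i$ have pairwise different growth rates and each satisfies $t^{k_i}\log t\prec a_i(t)\prec t^{k_i+1}$; your outline broadly mirrors the strategy used there (reduction to connected simply connected groups, Taylor expansion on short blocks, Green--Tao, then a density argument on the bad set).

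The gap is exactly where you flag it. From Green--Tao you obtain, for a positive proportion of block-starting points $M$, an inequality of the shape
\[
\Big\|\sum_{i\in I}\alpha_i\,a_i^{(k)}(M)\Big\|\ \leq\ \frac{C}{H(M)^k}
\]
for some $k$ in a bounded range, some finite set of possible $\alpha_i$, and $\|\cdot\|$ the distance to the nearest integer. Your proposed ``iterated integration'' to recover $|a(t)-cp(t)|\ll\log t$ for $a=\sum_i\alpha_i a_i$ does not follow: the smallness is modulo~$1$, not in absolute value, and holds only on a set of $M$ of positive density, not for all large $M$. In the paper's proof of the special case this obstacle is bypassed, not overcome: the different-growth hypothesis guarantees that $b(t)=\sum_{i\in I}\alpha_i a_i^{(k)}(t)$ itself lies in a Hardy field with $\log t\prec b(t)\prec t$, so Theorem~\ref{T:Bos} gives equidistribution of $(b(n))$ in $\T$ and hence the bad set has density $O(\varepsilon)$. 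Under the weaker hypothesis of the conjecture there is no such control on the $k$-th derivative of the linear combination for the particular $k$ that Green--Tao hands you, and the passage from ``$\|a^{(k)}(M)\|$ is small for many $M$'' to ``$a(t)$ is $O(\log t)$ away from some $cp(t)$'' is precisely the missing idea. The secondary difficulty you mention (matching top degrees, cancellations among the $\alpha_i a_i^{(d)}(M)$) is a manifestation of the same issue. In short, your plan reproduces the paper's method for Theorem~\ref{T:several} but does not supply the new ingredient needed to remove the growth-rate hypothesis; that ingredient is what makes the statement a conjecture.
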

\subsubsection{More general classes of functions}
We make some remarks about the extend of the functions our methods cover that do not necessarily belong to some Hardy field.

 The conclusions of Theorem~\ref{T:B} hold if for some $k\in \N$ the function
  $a(t)$ is $(k+1)$-times differentiable for large $t\in \R$  and satisfies:
\begin{equation}\label{E:tempered}
    a^{(k+1)}(t)\to 0 \text{  monotonically} ,\  \text{ and } \  \ t|a^{(k+1)}(t)|\to \infty.
 \end{equation}
 (If $a\in \H$, then \eqref{E:tempered} is equivalent to  ``$t^k\log{t}\prec a(t)\prec t^{k+1}$''.)
 To see this, one can repeat the proof  of Theorem~\ref{T:several}  in this particular setup.
 More generally, the conclusion of Theorem~\ref{T:B} holds for
functions $a(t)$  that satisfy the following less restrictive
conditions: For some
$k\in \N$ the function $a\in C^{k+1}(\R_+)$ satisfies
\begin{equation}\label{E:complicated}
|a^{(k+1)}(t)| \text{ decreases to zero}, \quad 1/t^k\prec a^{(k)}(t)\prec 1,
\quad  \text{and } \quad  (a^{(k+1)}(t))^k\prec (a^{(k)}(t))^{k+1}.
\end{equation}
(If $a\in \H$ , then \eqref{E:complicated} is equivalent to
``$a(t)$ has polynomial growth and $|a(t)-p(t)|\succ \log{t}$ for every $p\in \R[t]$''.)
One can see this by  repeating verbatim part of the proof of Theorem~\ref{T:B}. The reader is advised to think
of the second condition in \eqref{E:complicated}
as the most important one and the other two as  technical necessities (for functions in $\H$ the second condition implies the other two).

Theorem~\ref{T:several}   can be proved  for functions $a_i(t)$ that satisfy condition \eqref{E:tempered}
 for some $k_i\in \N$ (call this integer the type of $a_i(t)$), and also, for every $k\in\N$ every non-trivial
 linear combination  of those functions $a_i(t)$ that have type $k$ also satisfies \eqref{E:tempered}.

 As for  Theorem~\ref{T:A}, unless one works within
 a ``regular" class of functions  like $\H$, it seems hopeless to state a result
 with explicit necessary and sufficient conditions.




\subsubsection{Random  sequences of sub-exponential growth}
So far, we have given examples of   sequences that are pointwise good for nilsystems and
have polynomial growth.
For Hardy sequences of super-polynomial growth,  it is indicated
in \cite{Bo2} that no growth condition should suffice to guarantee equidistribution on $\T$.
On the other hand, explicit  sequences of super-polynomial growth
like $(e^{\sqrt{n}})_{n\in\N}$ or  $(e^{(\log{n})^2})_{n\in\N}$ are expected
to be pointwise good for nilsystems, but proving this seems to be out of reach at the moment, even for rotations
on $\T$.

Nevertheless, using a probabilistic argument, we shall show that there exist
 very sparsely distributed sequences that are pointwise good for nilsystems.
In fact, loosely speaking,  we shall see that the only growth condition prohibiting the existence of such examples is exponential growth.

 Our probabilistic setup is as follows. Let
   $(\sigma_n)_{n\in\N}$ be a decreasing sequence of reals in $[0,1]$.
  We shall construct random sets of integers by including each integer $n$ in the set with probability $\sigma_n\in [0,1]$.
   More formally, let  $(\Omega, \Sigma, P)$ be a probability space, and
 $(X_n)_{n\in\N}$ be a sequence of $0-1$ valued independent random variables with
$P(\{\omega\in \Omega\colon X_n(\omega)=1\})=\sigma_n$. Given $\omega\in \Omega$ we construct the set of positive
integers  $A^\omega$  by taking $n\in A^\omega$ if and only if  $X_n(\omega)=1$. By writing the elements of $A^\omega$
in increasing order we get a sequence $(a_n(\omega))_{n\in\N}$.

 If $\sigma_n=1/n^c$
  where $c\in [0,1)$, then the resulting random sequence has almost surely polynomial growth (in fact it is  asymptotic to $n^{1/(1-c)}$).
    If $\sigma_n=1/n$, then almost surely the  resulting random sequence is bad for pointwise convergence results
      even for circle rotations (see \cite{JLW}).
  Therefore,  it makes sense to restrict our attention to the case where $\sigma_n\succ 1/n$.
  By choosing $\sigma_n$ appropriately, we can get
  examples of random sequences with any prescribed sub-exponential growth.



In  \cite{Bo1} (and subsequently in \cite{Bo})  it was shown that if $\lim_{n\to\infty}  n\sigma_n=\infty$, then
almost surely, the random sequence $(a_n(\omega))_{n\in\N}$ is pointwise good for convergence of
 rotations on the circle.
 We extend this result to rotations on nilmanifolds by showing the following:
\begin{theorem}\label{T:random}
Let $(\sigma_n)_{n\in\N}$ be a decreasing sequence  of reals  satisfying
$\lim_{n\to\infty}  n\sigma_n=\infty$.

Then almost surely, the random sequence $(a_n(\omega))_{n\in\N}$ is pointwise good
 for nilsystems.
\end{theorem}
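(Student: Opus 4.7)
The strategy is to transport the problem onto the deterministic orbit $(b^n x)_{n\in\N}$ by reindexing, dispose of the random fluctuation via a Bernstein--Borel--Cantelli argument, and finally promote the pointwise statement to a single null exceptional set valid for every nilsystem. Set $S_M=\sum_{n\le M}\sigma_n$ and, for $\omega$ fixed, let $M=a_N(\omega)$, so that $N=\sum_{n\le M}X_n(\omega)$ and
$$
\frac{1}{N}\sum_{k=1}^N F(b^{a_k(\omega)}x)=\frac{1}{N}\sum_{n=1}^M X_n(\omega)\,F(b^n x).
$$
Since the hypothesis $n\sigma_n\to\infty$ forces $S_M\succ\log M$ with an arbitrarily large implicit constant, a Bernstein estimate applied to the independent centered summands $X_n-\sigma_n$ yields $N/S_M\to 1$ almost surely. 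The theorem therefore reduces to the almost sure statement that, for every nilmanifold $X=G/\Gamma$, every $b\in G$, $x\in X$ and $F\in C(X)$,
$$
\frac{1}{S_M}\sum_{n=1}^M X_n(\omega)\,F(b^n x)\ \longrightarrow\ \int F\,dm_{X_b},
$$
where $X_b=\overline{(b^n x)_{n\in\N}}$.

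Decomposing $X_n=\sigma_n+(X_n-\sigma_n)$ splits the average into a deterministic piece and a noise piece. The deterministic piece $S_M^{-1}\sum\sigma_n F(b^n x)$ converges to $\int F\,dm_{X_b}$ by Abel summation, since $(\sigma_n)$ is positive, decreasing, $S_M\to\infty$, and the Cesàro averages of the bounded sequence $F(b^n x)$ converge to $\int F\,dm_{X_b}$ by the classical equidistribution of nil-orbits. It remains to show that the noise term
$$
T_M(\omega):=\frac{1}{S_M}\sum_{n=1}^M(X_n(\omega)-\sigma_n)\,F(b^n x)
$$
vanishes almost surely. The summands are independent, mean zero, bounded by $\|F\|_\infty$, with total variance at most $\|F\|_\infty^2 S_M$; Bernstein's inequality gives $P(|T_M|>\varepsilon)\le 2\exp(-c(\varepsilon,\|F\|_\infty)\,S_M)$. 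Since $S_{2^k}$ can be made to grow at least as $ck$ for any prescribed $c$, Borel--Cantelli along $M_k=2^k$ yields $T_{M_k}\to 0$ a.s., and interpolation to all $M$ follows from the doubling $S_{2M}\le 2S_M$ (a consequence of $\sigma_n\searrow$) combined with a Lévy-type maximal inequality applied to the independent-increment partial sums.

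The genuinely delicate step is to pass from these one-nilsystem-at-a-time statements to a single almost sure event valid for every choice of $(X,b,x,F)$ simultaneously. For fixed $(X,b,x)$, the map $F\mapsto T_M(\omega;F)$ has operator norm $S_M^{-1}\sum_{n\le M}(X_n+\sigma_n)=O(1)$ almost surely, so a countable dense subset of $C(X)$ suffices; a similar uniform continuity argument reduces $x$ to a countable dense set in $X$. To discretize the continuous parameter $b\in G$, the plan is to invoke the quantitative Leibman theorem of Green and Tao flagged in the introduction, which reduces equidistribution of $(b^{a_n(\omega)}x)_n$ in $X_b$ to equidistribution modulo $1$ of the sequences $(\alpha\,a_n(\omega))_n$ for the countable family of eigenvalues $\alpha$ associated with the non-trivial horizontal characters of $X_b$. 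The Bourgain--Boshernitzan theorem \cite{Bo1}, \cite{Bo} cited in the paper then supplies, under the very hypothesis $n\sigma_n\to\infty$, a single null set outside which $(\alpha\,a_n(\omega))_n$ is equidistributed mod $1$ for every real $\alpha$, closing the argument. The main obstacle is making this final reduction clean: the Green--Tao theorem is phrased for genuinely polynomial orbits, whereas here we have the linear orbit $n\mapsto b^n$ sampled along the random set $A^\omega$, and the work is in verifying that the horizontal character analysis required is indeed accessible from the Bernstein--Borel--Cantelli scheme above when applied to the test sequences $c_n=\chi(b^n x)$.
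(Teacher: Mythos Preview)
Your reindexing, Abel summation for the deterministic piece, and Bernstein--Borel--Cantelli control of $T_M$ for \emph{fixed} $(X,F,b,x)$ mirror the paper's reductions and are fine. The gap is in how you secure uniformity over the continuous parameter $b\in G$ (and, relatedly, over $x$). You propose to invoke the Green--Tao/Leibman reduction to horizontal characters and then Bourgain's circle theorem. But Theorem~\ref{T:L} and Theorem~\ref{T:GT} are statements about \emph{polynomial sequences} $(g(n)\Gamma)_{n\in\N}$ in $G$; they give no implication of the form ``if $(\chi(b^{c_n}))_n$ is equidistributed for every horizontal character $\chi$, then $(b^{c_n}\Gamma)_n$ is equidistributed in $X_b$'' for an arbitrary integer sequence $(c_n)$. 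Concretely, on the Heisenberg nilmanifold a continuous $F$ can restrict to the orbit $(b^n\Gamma)$ as $n\mapsto e(\gamma n^2+\cdots)$, so controlling $\frac{1}{N}\sum_n F(b^{a_n(\omega)}\Gamma)$ requires equidistribution of $(\gamma\,a_n(\omega)^2)$ mod~$1$ uniformly in~$\gamma$---a genuinely quadratic statement that Bourgain's linear result does not supply. Testing the noise $T_M$ only against $c_n=\chi(b^n x)$ controls nothing beyond those particular $F=\chi$, which do not span $C(X)$. (Your reduction of $x$ to a countable dense set has the same defect: the maps $x\mapsto b^n x$ are not equicontinuous on a non-abelian nilmanifold.)

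The paper obtains the uniformity in $b$ by a completely different, metric device. The key fact (Lemma~\ref{L:g^n}) is that multiplication in $G$ is polynomial, whence $d_G(g^n,h^n)\ll_{G,M} n^{k}\,d_G(g,h)$ for some $k=k(G)$ and all $g,h\in B_M$. Consequently (Proposition~\ref{P:random}) a $1/N^{k+2}$-net $B_{N,M}\subset B_M$ of cardinality $N^{O_{G,M}(1)}$ suffices to replace $\max_{b\in B_M}$ by $\max_{b\in B_{N,M}}$ up to $o(1)$ error. Since the net has only polynomially many points, the union bound over $B_{N,M}$ costs a bounded factor in the $L^{\log N}(\Omega)$ norm; combined with the moment bound $\|T_N\|_{L^{\log N}}\ll\sqrt{\log N/w(N)}$ from \cite{RW}, this yields $\max_{b\in B_M}|T_N|\to 0$ almost surely. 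This is precisely Bourgain's net argument for $G=\R$ (where the net is just $\{j/N^2:|j|\le MN^2\}$), transported to the nilpotent setting via the polynomial nature of the group law.
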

\begin{remark} As it will become clear from the proof,
 the condition $\lim_{n\to\infty}  n\sigma_n=\infty$ can be replaced with the condition
 $\lim_{N\to \infty}
\frac{\sum_{1\leq n\leq N}\sigma_n}{\log{N}}=\infty.
$
Furthermore, our method of proof will show that almost surely, the limits
$\lim_{N\to\infty}\frac{1}{N}\sum_{n=1}^NF(b^{a_n(\omega)}x)$ and $\lim_{N\to\infty}\frac{1}{N}\sum_{n=1}^NF(b^nx)$
are equal for every nilmanifold $X=G/\Gamma$, $F\in C(X)$,  $b\in G$, and $x\in X$.
\end{remark}
\subsection{Applications}
We give some rather straightforward applications of the preceding equidistribution results.
We only sketch their proofs  leaving some routine details to the reader. For aesthetic reasons,
we represent elements $t\Z$ of $\T=\R/\Z$ by  $t$.

The first is  an equidistribution result on  $\T$, which we do
not see how to handle using conventional exponential sum techniques for   $k\geq 2$.
\begin{theorem}\label{T:Application1}
Let $a\in\H$ have   polynomial  growth
and satisfy   $|a(t)-cp(t)|\succ \log t$ for every $c\in \R$ and $p\in \Z[t]$.

Then for  every  $k\in\N$ and irrational $\beta$, the sequence $([a(n)]^k\beta)_{n\in\N}$ is equidistributed in $\T$.
\end{theorem}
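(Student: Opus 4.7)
The plan is to realize the sequence $([a(n)]^k\beta\bmod 1)_{n\in\N}$ as the projection of a single nil-orbit, and then invoke Theorem~\ref{T:B}(ii). Specifically, consider the unipotent affine transformation $T\colon \T^k\to\T^k$ given by
$$
T(x_1,x_2,\ldots,x_k)=(x_1+\beta,\, x_2+x_1,\, x_3+x_2,\,\ldots,\,x_k+x_{k-1}).
$$
It is a classical fact (going back to Furstenberg) that $T$ is a nilrotation on $X=\T^k$ equipped with a $k$-step nilmanifold structure $X=G/\Gamma$, with $G$ connected and simply connected and $\Gamma=\Z^k$. Letting $b\in G$ be the generator of this rotation and $o\in X$ the origin, a straightforward induction on $n$ shows
$$
b^{n}o=\bigl(n\beta,\,\tbinom{n}{2}\beta,\,\ldots,\,\tbinom{n}{k}\beta\bigr)\pmod{\Z^k}.
$$
Writing $n^k=\sum_{j=1}^{k}d_{j}\binom{n}{j}$ with integer (Stirling) coefficients $d_j$, the continuous map $\pi\colon X\to\T$ defined by $\pi(x_1,\ldots,x_k)=\sum_{j=1}^{k}d_{j}x_{j}\pmod 1$ therefore satisfies $\pi(b^{n}o)=n^k\beta\pmod 1$ for every $n\in\N$.

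Since $\beta$ is irrational, Weyl's theorem guarantees that $(n^k\beta)_{n\in\N}$ is equidistributed in $\T$. Let $Y:=\overline{(b^{n}o)_{n\in\N}}$ be the orbit closure; by the standard theory of nil-orbits this is a sub-nilmanifold on which $(b^{n}o)_{n\in\N}$ is equidistributed with respect to Haar measure $m_Y$, and the identity $\pi(b^{n}o)=n^k\beta\pmod 1$ together with Weyl therefore forces $\pi_{*}m_{Y}=m_{\T}$. Now Theorem~\ref{T:B}(ii), applied to $X$, $b$, and $o$, asserts that $(b^{[a(n)]}o)_{n\in\N}$ is equidistributed in $Y$. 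Testing this equidistribution against test functions of the form $f\circ\pi$ with $f\in C(\T)$ yields
$$
\lim_{N\to\infty}\frac{1}{N}\sum_{n=1}^{N}f\bigl([a(n)]^{k}\beta\bigr)=\int_{Y}(f\circ\pi)\,dm_{Y}=\int_{\T}f\,dm_{\T},
$$
which is the desired equidistribution of $([a(n)]^k\beta)_{n\in\N}$ in $\T$.

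The only non-trivial input beyond Theorem~\ref{T:B}(ii) is the algebraic step in the first paragraph: the realization of the polynomial sequence $(n^k\beta\bmod 1)$ as a continuous projection of a single nil-orbit on a connected, simply connected $k$-step nilmanifold. This is what makes the nilsystem viewpoint essential for $k\geq 2$, since conventional van der Corput and Weyl-sum techniques do not readily handle $([a(n)]^k)_{n\in\N}$: its increments are themselves Hardy-field sequences whose oscillations are difficult to control directly.
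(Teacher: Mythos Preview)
Your proof is correct and follows essentially the same route as the paper's: realize $n^k\beta$ as a continuous image of the orbit of a unipotent affine nilrotation on $\T^k$ and invoke Theorem~\ref{T:B}(ii). The paper chooses the affine map so that $T^n(0,\ldots,0)=(n\beta,n^2\beta,\ldots,n^k\beta)$ and uses that $T$ is ergodic for irrational $\beta$; this makes the projection a single coordinate and gives $Y=\T^k$ outright, so your Stirling combination and the Weyl/pushforward step become unnecessary, but the idea is the same. One inessential slip: for $k\ge 2$ the nilpotent group $G$ realizing $T$ as a nilrotation is non-abelian, so the assertion ``$\Gamma=\Z^k$'' is not accurate---but all your argument actually uses is that $(\T^k,T)$ is isomorphic to a nilsystem, which is the classical fact you cite.
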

\begin{remark}
A standard modification of our argument gives the following more general conclusion: for every $q\in \Z[t]$ non-constant and irrational $\beta$, the sequence $\big(q([a(n)])\beta\big)_{n\in\N}$ is equidistributed in $\T$. A similar extension holds
for Theorems~\ref{T:Application3}, \ref{T:Application4}, and Theorem~\ref{T:Application2} (with $\ell$ non-constant polynomials).
\end{remark}
\begin{proof}[Proof (Sketch)]
Suppose for convenience that $k=2$.
We define the transformation
$T\colon\mathbb{T}^2\to\mathbb{T}^2$ by
$ T(x,y)=\big(x+\beta,y+2x+\beta\big).$
It is well known that the resulting system is isomorphic to a nilsystem (and the conjugacy map is continuous),
and that this system is ergodic if $\beta$ is irrational.
Applying  Theorem~\ref{T:B} we  get that the  sequence $(T^{[a(n)]}(0,0))_{n\in\N}$ is equidistributed in $\T^2$.
An easy computation shows that $T^n(0,0)=(n\beta,n^2\beta)$, therefore for every $F\in C(\T^2)$ we have
$$
\lim_{N\to \infty} \frac{1}{N}\sum_{n=1}^N F([a(n)]\beta,[a(n)]^2\beta)=\int F \ dm_{\T^2},
$$
where $m_{\T^2}$ denotes the normalized Haar measure on $\T^2$.
Using this identity for $F(x,y)=e(ky)$, where $k$ is a non-zero integer, we get
$$
\lim_{N\to \infty}  \frac{1}{N} \sum_{n=1}^N e(k[a(n)]^2\beta )=0.
$$
This shows that the sequence $([a(n)]^2\beta)_{n\in\N}$ is equidistributed in $\T$.
\end{proof}
 Similarly, we can deduce from Theorem~\ref{T:several} the following result:
\begin{theorem}\label{T:Application2}
Let   $a_1(t),\ldots,a_{\ell}(t)$ be functions that belong to the same Hardy field, have different growth rates, and satisfy
 $t^k\log{t}\prec a_i(t)\prec t^{k+1} $ for some  $k=k_i\in \N$.

Then for every  $l_i\in\N$ and irrationals $\beta_i$, the sequence
$$([a_1(n)]^{l_1}\beta_1,\ldots,[a_{\ell}(n)]^{l_{\ell}}\beta_{\ell} )_{n\in\N}$$ is equidistributed in $\T^{\ell}$.
\end{theorem}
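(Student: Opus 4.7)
The plan is to mimic the proof of Theorem~\ref{T:Application1} but with a product of several skew-product nilsystems, one for each $i=1,\ldots,\ell$, and then to invoke Theorem~\ref{T:several} in place of Theorem~\ref{T:B}.

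For each $i$ I will define a transformation $T_i\colon \T^{l_i}\to \T^{l_i}$ by iterated skew products so that $T_i^n(\mathbf 0)=(n\beta_i,n^2\beta_i,\ldots,n^{l_i}\beta_i)$. This is the standard Anzai-type skew product (the map used in the sketch of Theorem~\ref{T:Application1} is the case $l_i=2$): successive coordinates are obtained by adding an expression linear in the previous coordinate and $\beta_i$, with coefficients chosen so that iteration produces the monomials $n^j\beta_i$. Such a system is well known to be topologically conjugate (via a continuous map) to a nilrotation $(X_i,b_i)$ on a connected and simply connected nilmanifold $X_i=G_i/\Gamma_i$, and it is uniquely ergodic whenever $\beta_i$ is irrational, so that $\overline{(b_i^n x_i)}_{n\in\N}=X_i$.

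Next I will apply Theorem~\ref{T:several}(ii) to the tuple $(b_1,\ldots,b_\ell)$ with the functions $a_1,\ldots,a_\ell$ (whose growth hypotheses are exactly those of Theorem~\ref{T:several}). This gives that the sequence
\[
\bigl(T_1^{[a_1(n)]}(\mathbf 0),\,\ldots,\,T_\ell^{[a_\ell(n)]}(\mathbf 0)\bigr)_{n\in\N}
\]
is equidistributed in $X_1\times\cdots\times X_\ell$, hence, after transporting through the conjugacies, in $\T^{l_1}\times\cdots\times \T^{l_\ell}$. Testing against the function
\[
F(x_1,\ldots,x_\ell)=\prod_{i=1}^\ell e\bigl(k_i\, y_i^{(l_i)}\bigr),
\]
where $y_i^{(l_i)}$ denotes the last coordinate of $x_i\in \T^{l_i}$ and $(k_1,\ldots,k_\ell)\in\Z^\ell\setminus\{0\}$, and evaluating along $x_i=\mathbf 0$ gives
\[
\lim_{N\to\infty}\frac{1}{N}\sum_{n=1}^N e\Bigl(\sum_{i=1}^\ell k_i\,[a_i(n)]^{l_i}\beta_i\Bigr)=\prod_{i=1}^\ell \int_{\T^{l_i}} e\bigl(k_i\,y_i^{(l_i)}\bigr)\,dm_{\T^{l_i}}=0,
\]
since at least one $k_i\neq 0$. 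By the Weyl equidistribution criterion this is exactly the desired conclusion.

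Nothing in this outline is subtle: the construction of the $T_i$ is standard, ergodicity on each factor is immediate from the irrationality of $\beta_i$, and the product equidistribution is exactly what Theorem~\ref{T:several} provides. The only point that deserves attention is checking that the growth hypothesis on the $a_i$ stated in Theorem~\ref{T:Application2} matches that of Theorem~\ref{T:several}, which it does verbatim; no further hypotheses on the $\beta_i$ (beyond irrationality) are needed because the product nilmanifold does not entangle the different coordinates.
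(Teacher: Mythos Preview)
Your proposal is correct and follows essentially the same approach as the paper: the paper simply remarks that Theorem~\ref{T:Application2} is deduced from Theorem~\ref{T:several} in the same way that Theorem~\ref{T:Application1} is deduced from Theorem~\ref{T:B}, which is precisely what you carry out by replacing the single skew-product system by a product of $\ell$ such systems and invoking part~$(ii)$ of Theorem~\ref{T:several}.
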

 Next  we give an application to
  ergodic theory. We say that a sequence of integers $(a(n))_{n\in\N}$  is \emph{good for mean convergence}, if
for every invertible measure preserving system  $(X,\mathcal{B},\mu,T)$ and $f\in L^2(\mu)$ the averages
$
 \frac{1}{N}\sum_{n=1}^N f(T^{a(n)}x)
$
converge  in $L^2(\mu)$ as $N\to \infty$. Using the spectral theorem for unitary operators,
one can see that a sequence  $(a(n))_{n\in\N}$  is \emph{good for mean convergence}
if and only if for every $t\in \R$ the sequence $(a(n)t)_{n\in\N}$ has a limiting distribution.

\begin{theorem}\label{T:Application3}
Let $a\in \H$ have polynomial growth and  $k\in\N$.

Then the sequence  $([a(n)]^k)_{n\in\N}$
is good for mean convergence  if and only if one of the three conditions in Theorem~\ref{T:A} is satisfied.
\end{theorem}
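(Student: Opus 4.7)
The plan is to reduce to Theorem~\ref{T:A} via the spectral criterion noted just before the statement. By the spectral theorem, $([a(n)]^k)_{n\in\N}$ is good for mean convergence if and only if, for every $m\in\Z$ and $t\in\R$, the Cesaro mean $\frac{1}{N}\sum_{n=1}^N e(m[a(n)]^k t)$ converges, so the task is to show that this property is equivalent to $a$ satisfying one of the three conditions in Theorem~\ref{T:A}.

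For sufficiency, assume one of the three conditions holds, so by Theorem~\ref{T:A} the sequence $([a(n)])_{n\in\N}$ is pointwise good for nilsystems. Following the sketch of Theorem~\ref{T:Application1}, consider the $k$-step unipotent rotation
$$
T(x_1,\ldots,x_k) = (x_1+t,\ x_2+x_1,\ \ldots,\ x_k+x_{k-1})
$$
on $\T^k$, which is continuously isomorphic to a nilsystem and whose orbit of the origin satisfies $T^n(0,\ldots,0) = \bigl(nt,\binom{n}{2}t,\ldots,\binom{n}{k}t\bigr)$. Writing $m[a(n)]^k$ as a $\Z$-linear combination of the binomials $\binom{[a(n)]}{j}$, $0\le j\le k$ (via Stirling numbers of the second kind), the exponential $e(m[a(n)]^k t)$ becomes the value of a character of $\T^k$ at $T^{[a(n)]}(0,\ldots,0)$, and its Cesaro mean converges by pointwise goodness.

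For necessity, I would argue the contrapositive. Assume all three conditions fail; the Hardy-field hypothesis then forces $a(t)=cp(t)+R(t)$ with $c\in\R\setminus\{0\}$, $p\in\Z[t]$ of degree $\ge 1$ such that $cp$ is not of the form $t/m+\text{const}$ for any $m\in\Z$, and $R\in\H$ satisfying $R(t)\to\pm\infty$ and $|R(t)|\ll\log t$. Choose $t_0=1/c^k$, so that $(cp(n))^k t_0=p(n)^k\in\Z$, and expand
$$
[a(n)]^k t_0 = \bigl(cp(n)+R(n)-\{a(n)\}\bigr)^k t_0
$$
modulo $\Z$. The dominant surviving term is $k(cp(n))^{k-1}(R(n)-\{a(n)\})t_0$; its smooth part $k(cp(n))^{k-1}R(n)t_0$ is a Hardy sequence of polynomial growth that itself fails all three conditions of Theorem~\ref{T:A}, so by the circle-rotation obstruction established in \cite{BKQW} its Cesaro mean does not converge, and this transfers to the Cesaro mean of $e([a(n)]^k t_0)$.

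The main obstacle is this last transfer: the weights $(cp(n))^{k-1}$ grow polynomially while $\{a(n)\}$ is bounded, so one must rule out that the fractional-part correction $k(cp(n))^{k-1}\{a(n)\}t_0$ cancels the Hardy obstruction. The Hardy-field structure is essential here, as it decouples the two contributions: the slow growth of $R$ drives the obstruction, while $\{a(n)\}$ is governed by the finer mod-$\Z$ oscillation of $a$ and so contributes at an essentially independent scale, which allows the \cite{BKQW} obstruction to survive.
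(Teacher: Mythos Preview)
Your sufficiency argument is correct and is the same as the paper's: apply Theorem~\ref{T:A} to a unipotent affine rotation on $\T^k$ and read off convergence of $\frac{1}{N}\sum_{n=1}^N e(m[a(n)]^k t)$ via a character; the Stirling-number bookkeeping is a fine way to make explicit what the paper leaves as ``appropriate unipotent affine transformations.''

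Your necessity argument, however, contains a genuine error. You assert that the smooth part $S(t)=k(cp(t))^{k-1}R(t)\,t_0=k\,p(t)^{k-1}R(t)/c$ ``itself fails all three conditions of Theorem~\ref{T:A}.'' For $k\ge 2$ and $\deg p\ge 1$ this is false: since $1\prec |R(t)|\ll\log t$, one has $t^{(k-1)\deg p}\prec |S(t)|\ll t^{(k-1)\deg p}\log t$, and this growth rate forces $|S(t)-c'q(t)|\succ\log t$ for every $c'\in\R$ and $q\in\Z[t]$, so condition~(i) actually \emph{holds} for $S$. (Concretely, take $a(t)=\sqrt{2}\,t+\log t$ and $k=2$; then $S(t)=\sqrt{2}\,t\log t$ has type $1^+$, and $(S(n)\Z)_{n\in\N}$ is equidistributed in $\T$ by Theorem~\ref{T:Bos}.) Hence the Ces\`aro averages of $e(S(n))$ converge to $0$ --- the opposite of what you need --- and any obstruction to convergence of $\E_{1\le n\le N}e([a(n)]^k t_0)$ must come precisely from the interaction with the $\{a(n)\}$-terms you were hoping to discard. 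Your final paragraph offers no mechanism for this; the hand-wave about ``decoupling at independent scales'' does not survive the observation that the term $k\,p(n)^{k-1}\{a(n)\}/c$ is not a small correction but has the same polynomial size as $S(n)$. Note also that your case split omits $c=0$ (equivalently $\deg p_0=0$), where $t_0=1/c^k$ is undefined.

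The paper's route for necessity is shorter and avoids all of this: it observes that the obstruction ``can be seen exactly as in the proof of Theorem~\ref{T:Bos},'' i.e., one exhibits a single circle rotation for which the averages fail to converge by the same slow-increment/clustering argument used there, rather than by expanding $[a(n)]^k$.
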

\begin{remark}
For $k=1$ this result was established in \cite{BKQW}.
\end{remark}
\begin{proof}[Proof (Sketch)]
The necessity of the conditions can be seen exactly as in the proof of Theorem~\ref{T:Bos}.
To prove the sufficiency, we apply  Theorem~\ref{T:A} for some appropriate unipotent affine transformations of  some finite dimensional tori. We deduce that for every $k\in \N$ and $t\in\R$ the sequence
 $([a(n)]^k t)_{n\in\N}$ has a limiting distribution. As explained before, this implies   that the sequence $([a(n)]^k)_{n\in\N}$ is good for mean convergence.
\end{proof}
Lastly, we give a recurrence result for measure preserving systems, and a corresponding combinatorial consequence. We say that a sequence of integers   $(a(n))_{n\in\N}$  is \emph{good for recurrence}, if
for every invertible measure preserving system  $(X,\mathcal{B},\mu,T)$  and set $A\in \mathcal{B}$
with $\mu(A)>0$, one has $\mu(A\cap T^{-a(n)}A)>0$ for some $n\in \N$ such that $a(n)\neq 0$.
Using the correspondence principle of Furstenberg (\cite{Fu1}) one can see that this notion is equivalent to the following one: A sequence of integers $(a(n))_{n\in\N}$ is \emph{intersective} if every set of integers $\Lambda$ with positive upper density contains two distinct
elements $x$ and $y$ such that $x-y=a(n)$ for some $n\in\N$.
\begin{theorem}\label{T:Application4}
Let $a\in \H$ have  polynomial growth   and satisfy $|a(t)-cp(t)|\succ \log{t}$
for every $c\in \R$ and $p\in \Z[t]$.

Then for every $k\in\N$ the sequence  $([a(n)]^k)_{n\in\N}$
is good for recurrence (or intersective).
\end{theorem}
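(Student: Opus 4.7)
The plan is to combine Furstenberg's correspondence principle with a spectral reduction that transfers the Furstenberg--S\'ark\"ozy polynomial recurrence theorem from the sequence $(n^k)_{n\in\N}$ to the sequence $([a(n)]^k)_{n\in\N}$: although the two sequences look very different, they will induce the same Ces\`aro limits on the spectral side, thanks to the equidistribution results already established in this paper. By the correspondence principle it suffices to prove recurrence. Fix an invertible measure preserving system $(X,\cB,\mu,T)$ and $A\in\cB$ with $\mu(A)>0$; I want to produce $n\in\N$ with $a(n)\neq 0$ (which holds for all large $n$, since the hypothesis forces $|a(t)|\succ\log t$) and $\mu(A\cap T^{-[a(n)]^k}A)>0$. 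After passing to the ergodic decomposition I may assume $T$ is ergodic. Let $f=\mathbf{1}_A$, let $\sigma_f$ denote the spectral measure of $f$ on $\T$, and write $e(t):=e^{2\pi i t}$; then $\mu(A\cap T^{-m}A)=\int_{\T}e(mt)\,d\sigma_f(t)$ for every $m\in\Z$ and $\sigma_f(\{0\})=\mu(A)^2$.

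By Fubini, the Ces\`aro averages one wants to estimate take the form
$$
\frac{1}{N}\sum_{n=1}^N\mu(A\cap T^{-[a(n)]^k}A)=\int_\T c_N(t)\,d\sigma_f(t),\qquad c_N(t):=\frac{1}{N}\sum_{n=1}^N e\big([a(n)]^kt\big),
$$
with an analogous identity holding when $[a(n)]^k$ is replaced by $n^k$ and $c_N$ by $c_N^{\mathrm{id}}$. The heart of the proof will be to show that $c_N$ and $c_N^{\mathrm{id}}$ have the same pointwise limit at every $t\in\T$. For irrational $t$ this follows from Theorem~\ref{T:Application1} (giving $c_N(t)\to 0$) and from Weyl's theorem (giving $c_N^{\mathrm{id}}(t)\to 0$). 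For rational $t=p/q$ with $\gcd(p,q)=1$, I would apply Theorem~\ref{T:B}(ii) to the degenerate zero-dimensional nilmanifold $\Z/q\Z$ (taking $G=\Z$, $\Gamma=q\Z$, $b=1$); this produces equidistribution of $([a(n)]\bmod q)_{n\in\N}$ in $\Z/q\Z$, and hence both $c_N(p/q)$ and $c_N^{\mathrm{id}}(p/q)$ converge to the Gauss sum $\frac{1}{q}\sum_{m=0}^{q-1}e(m^kp/q)$. Since $|c_N|\leq 1$, bounded convergence then gives
$$
\lim_{N\to\infty}\frac{1}{N}\sum_{n=1}^N\mu(A\cap T^{-[a(n)]^k}A)=\lim_{N\to\infty}\frac{1}{N}\sum_{n=1}^N\mu(A\cap T^{-n^k}A),
$$
and the right-hand limit is strictly positive by the Furstenberg--S\'ark\"ozy theorem, forcing some $n$ with $\mu(A\cap T^{-[a(n)]^k}A)>0$.

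The main obstacle will be controlling $c_N$ at the rational atoms of $\sigma_f$: one needs pointwise convergence of $c_N(t)$ at every $t\in\T$ (not merely $\sigma_f$-almost everywhere) in order to apply bounded convergence directly. This is resolved by invoking Theorem~\ref{T:B}(ii) for the degenerate nilmanifolds $\Z/q\Z$, $q\in\N$. Everything else is routine spectral calculus together with the quoted classical input.
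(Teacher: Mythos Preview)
Your argument is correct and follows essentially the same route as the paper: use the spectral theorem to reduce to showing that for every $t\in\T$ the sequences $([a(n)]^k t)_{n\in\N}$ and $(n^k t)_{n\in\N}$ have the same limiting distribution, then invoke Furstenberg's polynomial recurrence for $(n^k)$. The only organizational difference is that the paper obtains the ``same limiting distribution for all $t$'' in one stroke, by applying Theorem~\ref{T:B}(ii) to a single unipotent affine transformation of $\T^k$ (whose orbit closure automatically adjusts to rational vs.\ irrational $t$), whereas you split explicitly into irrational $t$ (via Theorem~\ref{T:Application1} and Weyl) and rational $t=p/q$ (via Theorem~\ref{T:B}(ii) on the finite nilmanifold $\Z/q\Z$). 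Both variants rest on the same input, Theorem~\ref{T:B}; your case split is a perfectly valid unpacking of what the unipotent-affine argument is doing. The reduction to ergodic $T$ and the remark $\sigma_f(\{0\})=\mu(A)^2$ are harmless but unnecessary, since bounded convergence and Furstenberg's theorem already apply to arbitrary systems.
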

\begin{remarks}
$\bullet$ For $k=1$ this  result can be  deduced from the equidistribution results in \cite{Bo2}.

$\bullet$ A  more tedious argument can be used to show that the following weaker assumption suffices:
``$a\in \H$  has polynomial growth and satisfies  $|a(t)-cp(t)|\to \infty$ for every $c\in \R$ and
$p\in \Z[t]$". (By combining  the spectral theorem and an argument similar to one used in the proof of  Proposition~6.5
in \cite{Fr3}, one can handle the case where  and $|a(t)-cp(t)|\ll \log{t}$ for some $c\in \R$ and $p\in \Z[t]$.)
\end{remarks}
\begin{proof}[Proof (Sketch)]
We apply Theorem~\ref{T:B} for some appropriate unipotent affine transformations of  finite dimensional tori. We deduce that for every $k\in \N$ and $t\in\R$ the sequence
 $([a(n)]^k t)_{n\in\N}$ has the same limiting distribution as the sequence
  $(n^kt)_{n\in\N}$. Using this and the spectral theorem for unitary operators,
  we conclude that for every invertible measure preserving system  $(X,\mathcal{B},\mu,T)$  and set $A\in \mathcal{B}$ , we have
$$
\lim_{N\to \infty} \frac{1}{N}\sum_{n=1}^N\mu(A\cap T^{-[a(n)]^k}A)=\lim_{N\to \infty} \frac{1}{N}\sum_{n=1}^N\mu(A\cap T^{-n^k}A).
$$
Since the last limit is known to be positive  whenever $\mu(A)>0$ (\cite{Fu1}), the previous identity shows that the sequence $[a(n)]^k$ is good for recurrence.
\end{proof}

More delicate applications of the equidistribution results presented in Section~\ref{SS:equidistribution} include
statements about multiple recurrence and convergence of multiple  ergodic averages, and related combinatorial consequences.
 Such results require
much   extra work and will be presented in a forthcoming paper (\cite{Fr3}).







\subsection{Structure of the article} In Section~\ref{S:background} we give the necessary background on Hardy fields
and state some equidistribution results on nilmanifolds that will be used later.

In Section~\ref{S:warmup} we work on a model equidistribution problem that  helps us illustrate some of the ideas
needed to prove Theorems~\ref{T:A} and \ref{T:B}. We give a new proof of a result of Boshernitzan on equidistribution
of the fractional parts of Hardy sequences of polynomial growth.

In Section~\ref{S:single} we prove Theorems~\ref{T:A} and \ref{T:B}. The key ingredients are: (i) a reduction step that enables
us to ``remove'' the  integer parts and deal with equidistribution properties on nilmanifolds $X=G/\Gamma$ with $G$ connected and
simply connected, (ii) the proof technique of the model problem described in Section~\ref{S:warmup}, and
 (iii) some quantitative equidistribution results of Green and Tao.

  In Section~\ref{S:multiple} we prove Theorem~\ref{T:several}. The proof strategy is similar with that of
   Theorems~\ref{T:B}, with the exception of a key technical difference  that is illustrated
   using a  model equidistribution problem.

  In Section~\ref{S:random} we prove Theorem~\ref{T:random}. We adapt an argument of Bourgain
  that worked for circle rotations to our more complicated non-Abelian setup.

\subsection{Notational conventions.} The following notation will be
used throughout the article: $\N=\{1,2,\ldots\}$, $\T^k=\R^k/\Z^k$, $Tf=f\circ T$,
$e(t)=e^{2\pi i t}$, $[t]$ denotes the integer part of $t$,  $\{t\}=t-[t]$, $\norm{t}=d(t,\Z)$,
$\E_{ n\in A} a(n)=\frac{1}{|A|}\sum_{ n\in A} a(n)$.
By $a(t)\prec b(t)$ we mean
$\lim_{t\to\infty}a(t)/b(t)=0$,    by $a(t)\sim b(t)$ we mean $\lim_{t\to\infty}a(t)/b(t)$ is a non-zero real number, and by $a(t)\ll b(t)$ we mean $|a(t)|\leq C |b(t)|$
for some constant $C$ for  all large $t$.
We use the symbol $\ll_{w_1,\ldots, w_k}$  when
 some expression is  majorized by some  other expression and  the implied constant
 depends on the parameters $w_1,\ldots, w_k$. By  $o_{N\to\infty;w_1,\ldots,w_k}(1)$  we denote a quantity that goes to zero when the parameters $w_1,\ldots,w_k$ are fixed and
$N\to\infty$ (when there is no danger of confusion we may omit the parameters).
We often  write $\infty$ instead of $+\infty$.  For aesthetic reasons,
we sometimes represent elements $t\Z$ of $\T=\R/\Z$ by  $t$.

\bigskip

\noindent {\bf Acknowledgement.}  We thank the referee for providing constructive comments.

\section{Background on Hardy fields and nilmanifolds}\label{S:background}
\subsection{Hardy fields}\label{SS:Hardy}
  Let $B$ be the collection of equivalence classes of real valued
  functions  defined on some half line $(c,\infty)$, where we
  identify two functions if they agree eventually.\footnote{The
    equivalence classes just defined are often called \emph{germs of
      functions}. We choose to use the word function when we refer to
    elements of $B$ instead, with the understanding that all the
    operations defined and statements made for elements of $B$ are
    considered only for sufficiently large values of $t\in \R$.}  A
  \emph{Hardy field} is a subfield of the ring $(B,+,\cdot)$ that is
  closed under differentiation. With $\H$ we denote the \emph{union of all
  Hardy fields}. If $a\in \H$ is defined in $[1,\infty)$ (one can always
  choose such a representative of $a(t)$)
  we call the sequence $(a(n))_{n\in\N}$ a \emph{Hardy sequence}.

A particular example of a Hardy field is the set of all rational functions with real coefficients.
 Another example is the set  $\LE$  that consists of
all  \emph{logarithmic-exponential functions} (\cite{Ha10},
\cite{Ha1}), meaning  all functions defined on some  half line
$(c,\infty)$ by a finite combination of the symbols $+,-,\times, :,
\log, \exp$, operating on the real variable $t$ and on real
constants.
 For example the functions $t^{\sqrt{2}}$, $t^2+t\sqrt{2}$,  $t\log{t}$, $e^{t^2}$,
$e^{\sqrt{\log \log t}}/\log(t^2+1)$,  are all elements of $\LE$.

We
collect here some properties that illustrate the richness of $\H$.
More information about   Hardy fields can be found in
the paper \cite{Bo2} and the references therein.

$\bullet$ $\H$ contains the set $\LE$  and  anti-derivatives of elements of $\LE$.

$\bullet$ $\H$ contains several other functions not in $\LE$, like the
functions $\Gamma(t)$, $\zeta(t)$,  $\sin{(1/t)}$.

$\bullet$ If $a\in \LE$ and $b\in\H$, then  there exists a Hardy field
containing both $a(t)$ and $b(t)$.

$\bullet$ If $a\in\LE$, $b\in \H$, and $b(t)\to \infty$, then $a\circ
  b\in \H$.

 \ \ \  If $a\in\LE$, $b\in \H$, and $a(t)\to \infty$, then $b\circ a\in \H$.

$\bullet$ If $a$ is a continuous function that is algebraic over some
  Hardy field, then $a\in\H$.

Using these properties it is easy to check that, for example, the sequences
$(\log \Gamma(n^2))_{n\in\N}$, $(n^{\sqrt{5}}\zeta(n))_{n\in\N}$,
 $((\text{Li}(n))^2)_{n\in\N}$ ($\text{Li}(t)=\int_2^t 1/\ln{s} \ ds$),  and
 the sequences that appear inside the integer parts
in \eqref{E:examples}, are Hardy sequences with
polynomial growth.
On the other hand, sequences that oscillate, like $(\sin{n})_{n\in\N}$,
 $(n\sin{n})_{n\in\N}$, or the sequence $(e^n+\sin{n})_{n\in\N}$ are not Hardy  sequences.

We mention some basic properties of elements of $\H$ relevant to our study. Every element of $\H$ has eventually constant sign
(since it has a multiplicative inverse).
 Therefore, if  $a\in \H$, then $a(t)$ is eventually monotone (since $a'(t)$ has eventually constant sign), and  the limit
$\lim_{t\to \infty} a(t)$ exists (possibly infinite).  Since  for every two
functions $a\in \H, b\in \LE$ $(b\neq 0)$, we have $a/b\in \H$, it follows that   the asymptotic growth ratio
$\lim_{t\to \infty}a(t)/b(t)$ exists (possibly infinite). This last property is key,
since it will often justify our use of  l'Hopital's rule.
\emph{We are going to freely use all these properties without any further explanation in the sequel.}

We caution the
 reader  that although every function in $\H$ is
asymptotically comparable with every function in $\LE$,
  some functions in $\H$ are not comparable. This defect of
 $\H$ will only play a role in one of our results (Theorem~\ref{T:several}), and can be sidestepped  by  restricting
 our attention to functions that belong to the same Hardy field.

A key property of elements of $\H$ with
polynomial growth is that we can relate their growth rates with
the growth rates of their derivatives:
\begin{lemma}\label{L:properties}
Suppose that   $a\in \H$ has polynomial growth.   We have the following:

$(i)$ If $t^\varepsilon \prec a(t)$ for some $\varepsilon>0$, then $a'(t)\sim a(t)/t$.

$(ii)$ If $t^{-k}\prec a(t)$ for some $k\in\N$, and $a(t)$ does not converge to a non-zero constant,
then $a(t)/(t(\log{t})^2)\prec a'(t)\ll a(t)/t$.
\end{lemma}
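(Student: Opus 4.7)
The plan is to translate both assertions into statements about the logarithmic derivative $b(t):=\log|a(t)|$, which lies in $\H$ because $\log\in\LE$ and (under the hypotheses) $|a(t)|\to\infty$ or $|a(t)|\to 0$. Since $b$ and $\log t$ then lie in a common Hardy field, the l'Hopital rule for Hardy fields (legitimate because $\log t\to\infty$) converts the size of $b$ into the size of $tb'(t)=ta'(t)/a(t)$. For (i), WLOG $a>0$ and $a\to\infty$, and polynomial growth with $a\succ t^\varepsilon$ gives, after taking logs, $\varepsilon\leq b(t)/\log t\leq K$ eventually for some $K\in\N$. The ratio $b/\log t$ lies in a Hardy field, so it has a limit $c\in[\varepsilon,K]$, which is nonzero. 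By l'Hopital, $\lim_{t\to\infty} tb'(t)=c$, which is precisely $a'(t)\sim a(t)/t$.

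\medskip

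For (ii), assume $a>0$ eventually; the hypothesis together with Hardy monotonicity forces $a\to 0$ or $a\to\infty$, so $|b(t)|\to\infty$ and $b'$ has eventually constant sign. The bounds $t^{-k}\prec a$ and $a\prec t^K$ yield $|b(t)|\leq C\log t$ eventually for some constant $C$, so $\lim b/\log t\in[-C,C]$ is finite, and l'Hopital again gives $\lim tb'(t)\in[-C,C]$; this bounds $ta'/a$ and proves the upper estimate $a'(t)\ll a(t)/t$.

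\medskip

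The lower bound $a(t)/(t(\log t)^2)\prec a'(t)$ is equivalent to $(\log t)^2\,tb'(t)\to\pm\infty$, and I expect this to be the main technical step. Since $(\log t)^2\,tb'(t)$ lies in a Hardy field, it has a limit in $[-\infty,+\infty]$; I argue by contradiction, assuming the limit is finite. Then $|b'(t)|\leq C'/(t(\log t)^2)$ eventually on some $[T_0,\infty)$, and since $b'$ has constant sign,
\[
|b(T)-b(T_0)| = \int_{T_0}^{T}|b'(s)|\,ds \leq \int_{T_0}^{T}\frac{C'}{s(\log s)^2}\,ds = C'\Big(\frac{1}{\log T_0}-\frac{1}{\log T}\Big),
\]
which stays bounded as $T\to\infty$. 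So $b$ would be bounded, contradicting $|b|\to\infty$. The delicate points are (a) ensuring $\log|a|\in\H$, which is exactly where the assumption that $a$ does not converge to a nonzero constant enters (it forces $|a|\to 0$ or $\infty$ so that the Hardy/l'Hopital machinery becomes available), and (b) using the constant sign of $b'$ to turn the pointwise bound on $|b'|$ into genuine control of the variation $|b(T)-b(T_0)|$.
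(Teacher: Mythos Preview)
Your argument is correct and matches the paper's proof essentially line for line: both pass to $b=\log|a|$, use l'Hopital (legitimate in the Hardy-field setting because all relevant limits exist) to identify $\lim ta'(t)/a(t)$ with $\lim b(t)/\log t$, and for the lower bound in (ii) argue by contradiction, integrating the estimate $|b'(t)|\ll 1/(t(\log t)^2)$ to force $b$ bounded. Your writeup is a bit more explicit about the constant sign of $b'$ and the integration, but there is no substantive difference.
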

\begin{remark} The assumption of polynomial growth is essential, to see this take
$a(t)=e^t$. To see that the other assumptions in parts  $(i)$ and $(ii)$
are essential take $a(t)=\log{t}$ for part $(i)$, and
  $a(t)=e^{-t}$,  $a(t)=1+1/t$ for part $(ii)$.
\end{remark}
\begin{proof}
First we deal with part $(i)$. Applying l'Hopital's rule  we get
\begin{equation}\label{E:log}
\lim_{t\to\infty}\frac{ta'(t)}{a(t)}=\lim_{t\to\infty}\frac{(\log{|a(t)|})'}{(\log{t})'}=
\lim_{t\to\infty}\frac{\log{|a(t)|}}{\log{t}}.
\end{equation}
Since $t^\varepsilon \prec a(t)$ for some $\varepsilon>0$ and $a(t)$ has polynomial growth,   the last  limit is
a positive real number. Hence, $a'(t)\sim a(t)/t$, proving part $(i)$.

Next we deal with part $(ii)$.
First notice that since   $a(t)$ does not converge to a non-zero constant  we can assume that
 either  $|a(t)|\to \infty$  or $|a(t)|\to 0$ .

 We show that $a'(t)\ll a(t)/t$.  Since $\lim_{t\to \infty} \log{|a(t)|}=\pm \infty$
 we can apply  l'Hopital's rule to get \eqref{E:log}.
Since  $a(t)$ has polynomial growth and $t^{-k} \prec a(t)$ for some $k\in\N$,
we have that  the limit  $\lim_{t\to\infty} \log{|a(t)|}/\log{t}$ is finite.
 Using \eqref{E:log} we conclude that  the same holds for the limit
$\lim_{t\to\infty} (ta'(t))/a(t)$. It follows that  $a'(t)\ll a(t)/t$.

Finally we show that  $a(t)/(t(\log{t})^2)\prec a'(t)$. Equivalently,  it suffices to show that the  limit
$$
\lim_{t\to\infty}\frac{t(\log{t})^2a'(t)}{a(t)}
$$
is infinite.
Arguing by contradiction, suppose  this is not the case. Then
$$
(\log{|a(t)|})'\ll \frac{1}{t(\log{t})^2},
$$
and integrating we get
$$
\log{|a(t)|}\ll \frac{1}{\log{t}}+c
$$ for some $c\in \R$.
It follows   that $\log{|a(t)|}$ is bounded, which contradicts the fact that $|a(t)|\to \infty$ or $0$.
This completes the proof.
\end{proof}

Following \cite{Bo2}, for  a non-negative integer $k$  we say that:
\begin{itemize}
\item[(i)] The function $a\in \H$  has \emph{type $k$} if $a(t)\sim t^k$.

\item[(ii)]  The function $a\in \H$  has \emph{type $k^+$} if $t^k\prec a(t)\prec t^{k+1}$.
\end{itemize}
It is easy to show the following:
\begin{lemma}[Boshernitzan~\cite{Bo2}]\label{L:type}
Suppose that $a\in \H$ has polynomial growth. Then

$(i)$ There exists a non-negative integer $k$ such that $a(t)$ has type  either $k$ or $k^+$.

$(ii)$ If $a(t)$ has type $k$, then $a(t)=ct^k+b(t)$ for some non-zero $c\in \R$ and $b\in \H$ with $b(t)\prec t^k$.
\end{lemma}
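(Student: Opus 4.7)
The plan is to leverage the closure properties of $\H$: since $a\in\H$ and $t^j\in\LE$, there is a Hardy field containing both, so $a(t)/t^j\in\H$ for every integer $j$. Each such ratio is then eventually monotone with a well-defined limit in $[-\infty,+\infty]$, and the polynomial growth hypothesis yields $\lim_{t\to\infty} a(t)/t^m=0$ for some $m\in\N$.

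For part $(i)$, I will consider the set $K=\{j\in\N\cup\{0\}\colon t^j\prec |a(t)|\}$, which is contained in $\{0,\ldots,m-1\}$ by the previous observation and is thus finite. Assuming $a$ does not tend to zero (otherwise $a$ admits no type in the sense defined, which appears to be implicit in the statement), one finds that either $K\neq\emptyset$ or $\lim_{t\to\infty} a(t)$ is a nonzero real. In the first case I set $k=\max K$; by maximality $t^{k+1}\not\prec |a(t)|$, so $L:=\lim_{t\to\infty} a(t)/t^{k+1}$ exists and is a finite real number. If $L\neq 0$ then $a(t)\sim t^{k+1}$, which gives $a$ type $k+1$; if $L=0$, combining $|a(t)|\prec t^{k+1}$ with $t^k\prec |a(t)|$ shows $a$ has type $k^+$. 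In the second case $a$ has type $0$.

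For part $(ii)$, assuming $a$ has type $k$, I set $c:=\lim_{t\to\infty} a(t)/t^k\in\R\setminus\{0\}$ and $b(t):=a(t)-c\,t^k$. Invoking once more the fact that any element of $\H$ and any element of $\LE$ lie in a common Hardy field, the function $b$ belongs to that Hardy field, hence to $\H$. A direct computation gives $b(t)/t^k=a(t)/t^k-c\to 0$, so $b(t)\prec t^k$, as required.

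The only delicate point is to keep track of the closure properties of $\H$, so that every ratio and difference appearing in the argument actually lies in some Hardy field and therefore has well-defined asymptotic behavior in $[-\infty,+\infty]$. Once this bookkeeping is in place, the proof reduces to a case analysis on the value of $\lim_{t\to\infty} a(t)/t^{k+1}$, and I do not anticipate any substantial obstacles beyond that.
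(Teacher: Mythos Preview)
The paper does not actually prove this lemma; it merely writes ``It is easy to show the following'' and attributes the result to Boshernitzan~\cite{Bo2}. Your argument is correct and supplies precisely the routine verification that the paper omits: the key ingredients are that $a(t)/t^j$ lies in a Hardy field for every non-negative integer $j$ (so each such ratio has a limit in $[-\infty,+\infty]$), and that polynomial growth forces these limits to vanish for large $j$. The case analysis on $L=\lim_{t\to\infty} a(t)/t^{k+1}$ is then exactly what is needed.

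You are also right to flag the implicit hypothesis that $a(t)$ does not tend to~$0$: a function such as $a(t)=1/t$ has polynomial growth in the paper's sense but admits neither type~$k$ nor type~$k^+$ for any non-negative integer~$k$. In all later uses of the lemma in the paper the relevant functions satisfy $|a(t)|\to\infty$, so this omission is harmless in context, but your observation is accurate.
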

Applying Lemma~\ref{L:properties} repeatedly we get:
\begin{corollary}\label{C:properties}
  Suppose that $a\in \H$ has type  $k^+$ for some non-negative integer $k$.

   Then for every $l\in \N$ with  $l\leq k$ we have $a^{(l)}(t)\sim a(t)/t^l$,
and for every $l\in \N$ we have 
 $$
 a(t)/(t^l(\log{t})^2)\prec a^{(l)}(t)\ll a(t)/t^l.
 $$
\end{corollary}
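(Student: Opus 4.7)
The plan is induction on $l$, applying Lemma~\ref{L:properties} at each stage, splitting the argument according to whether $l\leq k$ or $l>k$.

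For $l\leq k$ I would prove the asymptotic $a^{(l)}(t)\sim a(t)/t^l$ by induction on $l$ using part (i) of Lemma~\ref{L:properties}. The base case $l=0$ is immediate. For the inductive step, the hypothesis $a^{(l-1)}(t)\sim a(t)/t^{l-1}$ combined with the type $k^+$ assumption $a(t)\succ t^k$ gives $a^{(l-1)}(t)\succ t^{k-l+1}$, which exceeds $t^{1/2}$ since $l\leq k$. Since Hardy fields are closed under differentiation, $a^{(l-1)}\in\H$, and $a^{(l-1)}$ inherits polynomial growth from $a$, so Lemma~\ref{L:properties}(i) applies to $a^{(l-1)}$ and yields $a^{(l)}(t)\sim a^{(l-1)}(t)/t\sim a(t)/t^l$. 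In this range the desired sandwich $a(t)/(t^l(\log t)^2)\prec a^{(l)}(t)\ll a(t)/t^l$ is automatic.

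For $l\geq k+1$ I would prove the bounds by induction on $l$ using part (ii) of Lemma~\ref{L:properties}. At the base case $l=k+1$ I apply (ii) to $a^{(k)}$: by the previous paragraph $a^{(k)}(t)\sim a(t)/t^k\to\infty$ (since $a$ has type $k^+$), so $a^{(k)}$ does not converge to a non-zero constant and the hypothesis $t^{-1}\prec a^{(k)}(t)$ is immediate, producing the bounds for $l=k+1$. For the inductive step $l-1\to l$, I apply (ii) to $a^{(l-1)}$, checking that it lies in a Hardy field and has polynomial growth, that $t^{-M}\prec a^{(l-1)}(t)$ for some $M$ by the inductive lower bound, and that it does not tend to a non-zero constant. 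The last point follows because if $a^{(l-1)}(t)\to c\neq 0$, then integrating $l-1$ times would give $a(t)\sim ct^{l-1}/(l-1)!$, contradicting the type $k^+$ assumption since $l-1>k$.

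The main technical point is keeping track of the polylogarithmic factors through the iteration: a naive application of (ii) at each step introduces an extra factor of $(\log t)^2$ in the lower bound, so iterating literally produces $a(t)/(t^l(\log t)^{2(l-k)})\prec a^{(l)}(t)$. The uniform $(\log t)^2$ stated in the corollary is obtained by absorbing these extra factors into the strict $\prec$ relation---any fixed power of $\log t$ is dominated by $a^{(l)}(t)/(a(t)/t^l\cdot(\log t)^{-C})$ for $C$ large enough---so the stated bound follows for each fixed $l$ without needing to sharpen Lemma~\ref{L:properties}(ii) itself. Beyond this bookkeeping issue, the argument is a routine structural induction and is the principal source of the hypothesis $t^k\log t\prec a(t)\prec t^{k+1}$ used elsewhere in the article.
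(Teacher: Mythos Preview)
Your inductive scheme via Lemma~\ref{L:properties} is exactly what the paper means by ``applying Lemma~\ref{L:properties} repeatedly,'' and your treatment of the range $l\le k$ via part~(i) is correct. You are also right to flag that naive iteration of part~(ii) for $l>k$ yields only $a(t)/\bigl(t^l(\log t)^{2(l-k)}\bigr)\prec a^{(l)}(t)$, which is strictly weaker than the stated bound when $l>k+1$.

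Your proposed fix, however, is not an argument: the relation $a(t)/\bigl(t^l(\log t)^{2(l-k)}\bigr)\prec a^{(l)}(t)$ does not imply $a(t)/\bigl(t^l(\log t)^{2}\bigr)\prec a^{(l)}(t)$, and the sentence about dominating ``any fixed power of $\log t$'' by taking $C$ large is circular---it presupposes exactly the lower bound you are trying to prove. The correct observation is that the $(\log t)^2$ loss occurs \emph{at most once}. The computation~\eqref{E:log} in the proof of Lemma~\ref{L:properties} actually shows $\lim_{t\to\infty} tb'(t)/b(t)=\lim_{t\to\infty}\log|b(t)|/\log t$ whenever $|b(t)|\to 0$ or $\infty$, so $b'(t)\sim c\,b(t)/t$ whenever this limit $c$ is a nonzero real (not merely when $t^\varepsilon\prec b(t)$). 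Setting $\alpha_0=\lim\log|a(t)|/\log t\in[k,k+1]$, one checks inductively that $a^{(l)}(t)=t^{\alpha_0-l+o(1)}$; hence there is at most one index $l_0$ (namely $l_0=\alpha_0$, and only if $\alpha_0\in\{k,k+1\}$) where the exponent vanishes and part~(ii) must be invoked. At every other step the exponent $\alpha_0-l$ is nonzero, the sharper relation $a^{(l+1)}(t)\sim(\alpha_0-l)\,a^{(l)}(t)/t$ holds, and no further logarithmic factor is introduced.
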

\begin{remark}
The conclusion fails for some functions of  type $k$. Indeed, if $a(t)=1+1/t$, then
$a'(t)\prec a(t)/(t(\log{t})^2)$.
\end{remark}

  \subsection{Nilmanifolds}\label{SS:nil}
Fundamental properties of rotations on  nilmanifolds, related to our study,
were studied in \cite{AGH}, \cite{Pa}, \cite{Les}, and \cite{L2}.
Below we summarize some facts that we shall  use, all the proofs
can be found or deduced from \cite{L2} and \cite{CG}.

 Given a topological group $G$, we denote its identity element by $\text{id}_G$.
By $G_0$ we denote the connected component of $\text{id}_G$.  If $A,
B\subset G$, then $[A,B]$ is defined to be the subgroup generated by
elements of the form $\{[a,b]:a\in A, b\in B\}$ where $[a,b]=ab
a^{-1}b^{-1}$. We define the commutator subgroups recursively by
$G_1=G$ and $G_{k+1}=[G, G_{k}]$. A group $G$ is said to be {\it
   nilpotent} if  $G_{k}=\{\text{id}_G\}$ for some $k\in\N$.
If $G$ is a  nilpotent Lie group and $\gG$ is a discrete
cocompact subgroup, then the compact homogeneous space $X = G/\gG$ is called a
{\it nilmanifold}.  The group $G$ acts on $G/\gG$ by left
translation where the translation by a fixed element $b\in G$ is given
by $T_{b}(g\gG) = (bg) \gG$. We denote by  $m_X$ the normalized \emph{Haar measure} on $X$, meaning,
 the unique probability measure that is
 invariant under the action of $G$ by left
translations and is defined on the Borel $\sigma$-algebra of $X$.
We
call the elements of $G$ \emph{nilrotations}. A nilrotation $b\in G$
 \emph{acts ergodically on $X$},  if the sequence $(b^n\Gamma)_{n\in\N}$ is dense
in $X$.  When the nilmanifold $X$ is implicit we shall often simply
say that  a nilrotation $b\in G$ is \emph{ergodic}.
It can be shown that if $b\in G$ is ergodic, then  for
every $x\in X$ the sequence $(b^nx)_{n\in\N}$ is equidistributed in $X$.
A nilrotation $b\in G$ is  \emph{totally  ergodic}, if for every $r\in \N$ the  nilrotation $b^r$
is  ergodic. If the nilmanifold $X$ is connected it can be shown that every ergodic nilrotation
is in fact totally ergodic.

\begin{example}[Heisenberg nilmanifold]\label{Ex1}
  Let $G$ be the nilpotent  group that consists of all upper triangular matrices of the form
  $\left(\begin{smallmatrix} 1& x& z\\ 0& 1& y\\0& 0 & 1\end{smallmatrix}\right)$ with real entries.
   If we only allow integer entries we get a subgroup  $\Gamma$
  of $G$ that is discrete and cocompact. Then $G/\Gamma$ is a nilmanifold. It can be shown that a nilrotation
   $b=\left(\begin{smallmatrix} 1& \alpha& \gamma\\ 0& 1& \beta\\0& 0 & 1\end{smallmatrix}\right)$ is ergodic
   if and only if the numbers $1$, $\alpha$, and $\beta$ are rationally independent.
  \end{example}
Let $G$ be a connected and simply connected Lie group and
  $\exp\colon \mathfrak{g}\to G$ be the exponential map, where $\mathfrak{g}$ is the
  Lie algebra of $G$. Since $G$ is a connected and simply connected nilpotent Lie group, it is well known
   that the exponential
  map is a bijection.  For   $b\in G$ and $s\in\R$ we
define the element $b^s$ of $G$  as follows:
If  $X\in \mathfrak{g}$ is such that
  $\exp(X)=b$, then   $b^s=\exp(sX)$.

A more intuitive way to  make sense of the element $b^s$ is by thinking of $G$ as a matrix group;
then  $b^s$ is  the element one gets after replacing $n$ by $s$
in the formula giving the elements of the matrix $b^n$.
It is instructive to compare the two equivalent ways of defining $b^s$ in the following example.
  \begin{example}[Heisenberg nilflow]
 Let $X$ be the Heisenberg nilmanifold. Then the  exponential map is given by
 $\exp \left(\begin{smallmatrix} 0 & x & z\\ 0& 0& y\\0& 0& 0\end{smallmatrix}\right)=
 \left(\begin{smallmatrix} 1& x& z+\frac{1}{2}xy\\ 0& 1& y\\0& 0 & 1\end{smallmatrix}\right) $.
  As a consequence, if $b=\left(\begin{smallmatrix}1 & \alpha & \gamma \\ 0& 1& \beta\\0& 0& 1\end{smallmatrix}\right)$, then $\exp \left(\begin{smallmatrix}0 & \alpha & \gamma-\frac{1}{2}\alpha\beta \\ 0& 0& \beta\\0& 0& 0\end{smallmatrix}\right)=b$, and  a short computation shows that
 $b^s=\left(\begin{smallmatrix} 1& s\alpha& s\gamma+\frac{s(s-1)}{2}\alpha \beta \\ 0& 1& s\beta\\0& 0& 1\end{smallmatrix}\right)$.
Alternatively,  one can  find the same formula for $b^s$ after replacing $n$ by $s$ in the formula
 $b^n=\left(\begin{smallmatrix} 1& n\alpha& n\gamma+\frac{n(n-1)}{2}\alpha \beta \\ 0& 1& n\beta\\0& 0& 1\end{smallmatrix}\right)$.
  \end{example}

Next we record some  basic facts that we will frequently use:

(\emph{Basic properties of $b^s$}). If $G$ is a connected and simply connected Lie group, then
for $b\in G$ the map  $s\to b^s$ is  continuous, and for $s,s_1,s_2\in \R$
one has the identities
 $b^{s_1+s_2}=b^{s_1}\cdot b^{s_2}$,
  $(b^{s_1})^{s_2}=b^{s_1s_2}$, and  $(gbg^{-1})^s=gb^sg^{-1}$.

(\emph{Ratner's theorem, nilpotent case}) Let  $X=G/\Gamma$ be a nilmanifold. Then for every $b\in G$ the  set $X_b=\overline{\{b^n\Gamma, n\in\N\}}$ has
 the form $H/\Delta$, where $H$ is a closed subgroup of $G$ that contains $b$, and $\Delta=H\cap \Gamma$
is a discrete cocompact subgroup of $H$. Furthermore, the sequence $(b^n\Gamma)_{n\in\N}$ is equidistributed in $X_b$.

Likewise, if $G$ is
connected and simply connected, and $b\in G$, let  $Y_b=\overline{\{b^s\Gamma, s\in\R\}}$. Then $Y_b$
has the form $H/\Delta$ where $H$ is a closed connected and simply connected subgroup of $G$
that contains all elements $b^s$ for $s\in \R$, and $\Delta$ is a discrete cocompact subgroup of $H$. Furthermore, the nilflow
$(b^s\Gamma)_{s\in\R}$ is equidistributed in $Y_b$.

  (\emph{Change of base point formula}). Let  $X=G/\Gamma$ be a nilmanifold. As mentioned before,
  for every $b\in G$ the  nil-orbit  $(b^n\Gamma)_{n\in\N}$ is equidistributed in
  the set $X_b=\overline{\{b^n\Gamma, n\in\N\}}$. Using the identity
$b^{n}g=g(g^{-1}bg)^{n}$ we see that the nil-orbit $(b^ng\Gamma)_{n\in\N}$
is equidistributed in the set $g\cdot X_{g^{-1}bg}$. A similar formula holds
when $G$ is connected and simply connected and we replace the integer parameter $n$ with the real parameter $s$
and  the nilmanifold $X_{b}$ with $Y_b$.

(\emph{Lifting argument}).  In several instances it will
be convenient for us to assume that a nilmanifold
 $X$ has a representation $G/\Gamma$ with  $G$  connected and simply connected.
 To get this extra assumption we argue as follows (see \cite{L2}):
 Since all our results deal with the
 action on $X$ of  finitely many elements of $G$
 we conclude
  that for the purposes of this paper, we can, and will always assume that the discrete group $G/G_0$ is finitely generated.
In this case, one can show  that $X=G/\Gamma$ is isomorphic to a
 sub-nilmanifold of a nilmanifold $\tilde{X}=\tilde{G}/\tilde{\Gamma}$, where
 $\tilde{G}$ is a connected and simply-connected nilpotent Lie group, with all translations from
 $G$   ``represented'' in $\tilde{G}$  (for example if $X=\T$ then $\tilde{X}=\R/\Z$, and if
   $X=(\Z\times\R^2)/\Z^3$ then
 $\tilde{X}=\R^3/\Z^3$). Practically,  this means that for every  $F\in C(X)$, $b\in G$, and $x\in X$,   there exists
 $\tilde{F}\in C(\tilde{X})$, $\tilde{b}\in \tilde{G}$, and $\tilde{x}\in \tilde{X}$, such that
 $F(b^nx)=\tilde{F}(\tilde{b}^n\tilde{x})$ for every $n\in\N$.

One should keep in mind though  when using this lifting trick, that any assumption made about a nilrotation
 $b$ acting on a nilmanifold $X$, is typically lost when passing to the lifted nilmanifold $\tilde{X}$.
 Therefore,  the above mentioned construction will be helpful only when our working assumptions impose no
 restrictions on a nilrotation.

 \begin{example}\label{Ex2}
  Let $G$ be the non-connected nilpotent group that consists of all upper triangular matrices of the form
  $\left(\begin{smallmatrix} 1& k& z\\ 0& 1& y\\0& 0 & 1\end{smallmatrix}\right)$ where $k\in \Z$ and $y,z\in \R$.
  If we also restrict the entries $y$ and $z$ to be  integers  we get a subgroup  $\Gamma$
  of $G$ that is discrete and cocompact. In this case, the Heisenberg nilmanifold of Example~\ref{Ex1}
  can serve as the lifting  $\tilde{X}$  of the nilmanifold $X=G/\Gamma$.
  \end{example}

  \subsection{Equidistribution on nilmanifolds}\label{SS:nilequi}
  We
gather some equidistribution results of polynomial sequences on
nilmanifolds that will be used later.

\subsubsection{Qualitative equidistribution on nilmanifolds}
If $G$ is a nilpotent group, then a sequence $g\colon \N\to G$ of the
form $g(n)=a_1^{p_1(n)}\ \! a_2^{p_2(n)}\cdots  a_k^{p_k(n)}$, where $a_i\in G$,
and $p_i$ are polynomials taking integer values at the integers, is
called a \emph{polynomial sequence in} $G$. If the maximum
degree of the polynomials $p_i$ is at most $d$ we say that the
\emph{degree} of $g(n)$ is at most $d$.
A \emph{polynomial sequence on the
  nilmanifold} $X=G/\Gamma$ is a sequence of the form
$(g(n)\Gamma)_{n\in\N}$ where $g\colon \N\to G$ is a polynomial
sequence in $G$.

\begin{theorem}[{\bf Leibman~\cite{L2}}]\label{T:L}
  Suppose that $X = G/\gG$ is a nilmanifold, with  $G$ connected and simply connected,
   and $(g(n))_{n\in \N}$ is a
  polynomial sequence in $G$. Let $Z=G/([G,G]\Gamma)$ and
  $\pi\colon X\to Z$ be the natural projection.

Then the following statements are true:

  $(i)$ The
  sequence $(g(n)x)_{n\in\mathbb{N}}$ is equidistributed in a finite union of
  sub-nilmanifolds of $X$.

  $(ii)$ For every  $x\in X$ the
  sequence $(g(n)x)_{n\in\mathbb{N}}$ is equidistributed in $X$ if and
  only if the sequence  $(g(n)\pi(x))_{n\in\mathbb{N}}$ is equidistributed in $Z$.
\end{theorem}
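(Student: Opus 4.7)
The plan is to prove $(ii)$ first; its forward direction is immediate from continuity of $\pi$, and part $(i)$ will be extracted from the proof machinery. The key structural remark is that, since $G$ is connected, simply connected, and nilpotent, $[G,G]$ is closed and $Z = G/([G,G]\Gamma)$ is isomorphic to a torus $\T^d$. Under this identification the push-forward of a polynomial sequence $g(n)$ to $Z$ is an ordinary real-polynomial sequence, whose equidistribution is decided by Weyl's classical theorem. Thus the hypothesis of $(ii)$ amounts to a concrete irrationality condition on the top-weight coefficients of $g(n)$.

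The converse of $(ii)$ I would prove by induction on the nilpotency step $k$ of $G$, the base case $k=1$ being trivial since then $X=Z$. For the inductive step let $H = G_k$, the last nontrivial commutator subgroup; it is central, connected, and simply connected, and $\bar X = (G/H)/(\Gamma H/H)$ is a nilmanifold of step $k-1$ sharing the same abelianization $Z$. The group $H/(H\cap\Gamma)$ is a central torus acting freely on the fibers of the natural projection $\rho\colon X\to\bar X$, so by Peter--Weyl every $F\in C(X)$ decomposes as a uniformly convergent sum of vertical characters $F_\chi$ satisfying $F_\chi(hy)=\chi(h)F_\chi(y)$. The trivial-$\chi$ contribution descends to an average on $\bar X$ and is handled by the inductive hypothesis, while for each $\chi\ne 1$ the task is to prove
\[
\frac{1}{N}\sum_{n=1}^N F_\chi(g(n)x)\longrightarrow 0.
\]

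For this oscillatory sum I would invoke van der Corput's inequality: squaring and averaging in a shift $h$ replaces $g(n)$ with the auxiliary polynomial sequence $n\mapsto g(n+h)g(n)^{-1}$. Because $H$ is central, the character $\chi$ sees only the $H$-component of this product, and that component is a polynomial in $n$ of strictly lower degree than the component of $g(n)$ which produced the obstruction. A Bergelson--Leibman PET-type induction on a suitable ``degree--step'' complexity reduces the question to equidistribution of a polynomial sequence on a sub-nilmanifold of strictly smaller step, where the inductive hypothesis applies, while the hypothesis on $Z$ provides the irrationality needed to prevent the shifted averages from collapsing trivially. The main technical obstacle I anticipate is precisely this PET bookkeeping: verifying that after each van der Corput shift the auxiliary sequence still fits the inductive framework on an appropriate sub-nilmanifold, and that the irrationality inherited from the equidistribution on $Z$ is transmitted through the successive reductions.

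Part $(i)$ I would deduce via the \emph{Leibman trick}: embed the polynomial sequence $g(n)$ in $G$ as the restriction of a linear orbit $\tilde b^n$ in an enlarged connected, simply connected nilpotent Lie group $\tilde G$ acting on a nilmanifold $\tilde X$ which fibers over $X$. The nilpotent Ratner theorem quoted in Section~\ref{SS:nil} then identifies $\overline{\{\tilde b^n\tilde x\}}$ as a single sub-nilmanifold of $\tilde X$, and projecting back to $X$ exhibits the orbit closure of $(g(n)x)$ as a finite union of sub-nilmanifolds; the finite union arises from residue classes modulo a period dictated by the non-connected components one may encounter in the projection. Equidistribution on each piece then follows by applying $(ii)$ inside the corresponding sub-nilmanifold.
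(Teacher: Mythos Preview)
The paper does not prove Theorem~\ref{T:L}. It is quoted verbatim from Leibman~\cite{L2} as a black box and is used only as an input to the arguments of Sections~\ref{S:single} and~\ref{S:multiple}. There is therefore no ``paper's own proof'' to compare your proposal against.

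That said, your sketch is a reasonable outline of how results of this type are proved in the literature, though it conflates two distinct arguments. The vertical-character decomposition combined with van der Corput and a step-reduction induction is essentially the strategy of Green and Tao in~\cite{GT} for their quantitative version; Leibman's original proof in~\cite{L2} proceeds differently, via an inductive analysis of the orbit closure and a reduction to the linear case using the lifting construction you allude to for part~$(i)$. Your proposed deduction of $(i)$ from the linear case via an enlarged nilmanifold is indeed close in spirit to Leibman's approach, but note a gap: the lifted group $\tilde G$ you obtain is typically \emph{not} connected (the polynomial exponents force discrete factors), so you cannot directly invoke the connected Ratner-type statement quoted in Section~\ref{SS:nil}; one must first pass to a further connected cover or argue separately on connected components, and this is where the finite union genuinely arises. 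Your PET bookkeeping worry is well-placed but is a known and tractable issue; the more delicate point you have not addressed is why the irrationality hypothesis on $Z$ survives the van der Corput differencing---this requires tracking how the horizontal projection of $g(n+h)g(n)^{-1}$ depends on $h$, and it is here that the ``generic $h$'' argument does real work.
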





\subsubsection{Quantitative equidistribution on nilmanifolds}
We shall frequently use a quantitative version of Theorem~\ref{T:L}
that was  obtained
in \cite{GT}. In order to state it we need to review
some notions that were introduced in \cite{GT}.

Given a nilmanifold $X=G/\Gamma$, the {\em horizontal torus} is
defined to be the compact Abelian group $Z=G/([G,G]\Gamma)$.  If $X$
is connected, then $Z$ is isomorphic to some finite dimensional torus
$\T^l$. By $\pi\colon X\to H$ we denote the natural projection map.  A
\emph{horizontal character} $\chi\colon G\to \C$ is a continuous homomorphism
 that
satisfies $\chi(g\gamma)=\chi(g)$ for every $\gamma\in\Gamma$. Since
every character annihilates $G_2$, every horizontal character factors
through $Z$, and therefore can be thought of as a character of the horizontal
torus. Since $Z$ is identifiable with a finite dimensional torus
$\T^l$ (we assume that $X$ is connected), $\chi$ can also be thought
of as a character of $\T^l$, in which case there exists a unique
$\kappa\in\Z^l$ such that $\chi(t\Z^l)=e(\kappa\cdot t)$, where $\cdot$
denotes the inner product operation.
 We refer to $\kappa$ as the
\emph{frequency} of $\chi$ and $\norm{\chi}=|\kappa|$ as the \emph{frequency
  magnitude} of $\chi$.

\begin{example}
  Let $X$ be the Heisenberg nilmanifold (see Example~\ref{Ex1}).  The map $\chi \colon G\to \C$ defined by
  $\chi \left(\begin{smallmatrix} 1& x& z\\ 0& 1& y\\0& 0 & 1\end{smallmatrix}\right)=e(kx+ly)$,
  where $k,l\in\Z$, is a horizontal character of $G$. The map
  $\phi\left(\begin{smallmatrix} 1& x& z\\ 0& 1& y\\0& 0 & 1\end{smallmatrix}\right)
  =(x\Z,y\Z)$ induces an identification of the
  horizontal torus with $\T^2$.  Under this identification, $\chi$ is
  mapped to the character $\tilde{\chi}(x\Z, y\Z)=e(kx+ly)$ of $\T^2$.
\end{example}

Suppose that $p\colon\Z\to \R$ is a polynomial sequence of degree $k$,
then $p$ can be uniquely expressed in the form $
p(n)=\sum_{i=0}^k\binom{n}{i}\alpha_i $ where $\alpha_i\in\R$. We
define
\begin{equation}\label{E:norms}
  \norm{e(p(n))}_{C^\infty[N]}=\max_{1\leq i\leq k}( N^i \norm{\alpha_i})
\end{equation}
where $\norm{x}=d(x,\Z)$.


Given $N\in\N$, a finite sequence $(g(n)\Gamma)_{1\leq n\leq N}$ is
said to be $\delta$-\emph{equidistributed}, if
$$
\Big|\frac{1}{N}\sum_{n=1}^N F(g(n)\Gamma)-\int_{X}F \ dm_X\Big|\leq
\delta \norm{F}_{\text{Lip}(X)}
$$
for every Lipschitz function $F\colon X\to \C$, where
$$
\norm{F}_{\text{Lip}(X)}=\norm{F}_\infty+ \sup_{x,y\in X, x\neq
  y}\frac{|F(x)-F(y)|}{d_X(x,y)}
$$
for some appropriate metric $d_X$ on $X$.
  We can now
state the equidistribution result that  we shall use. It is a direct consequence
of Theorem~2.9 in \cite{GT} (we have suppressed some distracting quantitative details
that  will be of no use for us):
\begin{theorem}[{\bf Green \& Tao~\cite{GT}}]\label{T:GT}
  Let $X=G/\Gamma$ be a nilmanifold with $G$ connected and simply
  connected, and  $d\in\N$.

 Then for every small  enough $\delta>0$ there exist  $M=M_{X,d,\delta}\in \R$
  with the following property:
  For every
  $N\in\N$,
   if $g\colon \Z\to G$ is a polynomial sequence of degree at most $d$
  such that the finite sequence $(g(n)\Gamma)_{1\leq n\leq N}$ is not
  $\delta$-equidistributed,
    then for some non-trivial horizontal character
  $\chi$ with  $\norm{\chi}\leq M$  we have
  \begin{equation}\label{E:badequi}
    \norm{\chi( g(n))}_{C^\infty[N]}\leq  M,
  \end{equation}
   where $\chi$ is thought of as a
  character of the horizontal torus $Z=\T^l$ and $g(n)$ as a
  polynomial sequence in $\T^l$.
\end{theorem}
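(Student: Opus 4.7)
My plan is to prove this by a Weyl-style induction on the dimension of $G$, in the spirit of Leibman's qualitative proof but with explicit quantitative bookkeeping. The base case is when $G$ is abelian, so $X=\T^l$; then $g(n)\Gamma$ is just a polynomial sequence in $\T^l$ and the claim reduces to a quantitative Weyl equidistribution theorem: if the sequence is not $\delta$-equidistributed, then by Fourier inversion some non-trivial character $\chi$ has $|\E_{n\leq N} \chi(g(n))|$ not too small, and classical Weyl differencing (applied $d$ times) forces $\|\chi\circ g\|_{C^\infty[N]}\leq M(d,\delta)$, giving the desired conclusion with $M$ depending only on $d,\delta,l$.

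For the inductive step, I would use van der Corput's inequality to pass from the nilmanifold $X$ of step $s$ to a ``derived'' sub-nilmanifold of lower step. Namely, if $(g(n)\Gamma)_{n\leq N}$ fails to be $\delta$-equidistributed against a Lipschitz $F$, then by van der Corput there is a positive proportion of shifts $h$ for which the sequence $n\mapsto g(n+h)g(n)^{-1}\cdot \Gamma$, living in a quotient nilmanifold $X'=G/[G,G_s]\Gamma$ of strictly smaller step, also fails to be $\delta'$-equidistributed (where $\delta'$ depends polynomially on $\delta$). Apply the inductive hypothesis to the polynomial sequence $n\mapsto g(n+h)g(n)^{-1}$, which has degree at most $d$ as well, to obtain horizontal characters $\chi_h$ with bounded norm and $\|\chi_h\circ (g(\cdot+h)g(\cdot)^{-1})\|_{C^\infty[N]}\leq M'$. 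A pigeonholing over $h$ produces a single $\chi_h=\chi$ occurring for many $h$, and one then ``integrates'' these estimates, using the identity $\chi_h(g(n+h)g(n)^{-1})$ expands as a polynomial in $n$ with controlled coefficients, to recover a horizontal character of the original $X$ obstructing equidistribution.

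An equivalent route, which I would probably prefer for clarity, is to first establish a \emph{factorization theorem}: any polynomial sequence $g\colon \Z\to G$ of degree $\leq d$ can, at every scale $N$ and every precision $M_0$, be written as $g(n)=\varepsilon(n)\,g'(n)\,\gamma(n)$ with $\varepsilon$ slowly varying (``smooth''), $g'$ taking values in a closed rational subgroup $G'\subseteq G$ on which $(g'(n)\Gamma')_{n\leq N}$ is highly equidistributed, and $\gamma$ rational-periodic. The equidistribution theorem then follows because if $g$ fails $\delta$-equidistribution, the factorization forces the sub-nilmanifold $G'/\Gamma'$ to be proper, which by a compactness/finiteness statement for closed rational subgroups of $G$ produces a non-trivial horizontal character $\chi$ of bounded norm annihilating $G'$, so that $\chi\circ g$ is controlled by $\chi\circ\varepsilon$ and $\chi\circ\gamma$ alone, forcing $\|\chi\circ g\|_{C^\infty[N]}\leq M$.

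The main obstacle I anticipate is the quantitative control in the inductive van der Corput step: one must ensure that after pigeonholing $h$, the resulting horizontal character $\chi$ of the \emph{lower}-step quotient can be lifted to a horizontal character of the original nilmanifold with norm bounded solely in terms of $X,d,\delta$, not in $N$. This requires the compactness of the space of closed rational subgroups at bounded height (a Mal'cev basis computation) and a careful argument that the ``$C^\infty[N]$-smallness'' transfers across one differencing; losses in $\delta$ compound at each step, so one needs $M$ to grow polynomially, not tower-exponentially, in $1/\delta$. Managing these losses is what makes the Green--Tao argument delicate, whereas extracting the pure qualitative statement is comparatively easy.
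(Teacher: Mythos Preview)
The paper does not prove this theorem at all: it is quoted as a black box, stated as ``a direct consequence of Theorem~2.9 in \cite{GT}'' with some quantitative details suppressed. So there is no ``paper's own proof'' to compare your proposal against; the author simply cites Green and Tao.

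That said, your outline is broadly in the spirit of the actual Green--Tao argument, but two points are imprecise enough to be genuine gaps if you tried to carry them out. First, the van der Corput step does not work as you describe: the sequence $n\mapsto g(n+h)g(n)^{-1}$ does not automatically take values in, or project naturally to, a nilmanifold of strictly lower step; it is still a sequence in $G$. What Green and Tao actually do is decompose $F$ into \emph{vertical characters} (eigenfunctions of the central torus $G_s/(\Gamma\cap G_s)$), and it is the product $F(g(n+h)\Gamma)\overline{F(g(n)\Gamma)}$ that descends to a function on a lower-step quotient $G/G_s\Gamma$. The induction is then a joint induction on the step $s$ and the degree $d$ of the polynomial sequence, not simply on $\dim G$. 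Second, your ``equivalent route'' via the factorization theorem has the logical dependence inverted: in \cite{GT} the factorization theorem is \emph{deduced from} (an iterated application of) the quantitative equidistribution theorem, not the other way around, so invoking factorization to prove equidistribution would be circular unless you supply an independent proof of factorization.
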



\begin{example}\label{Ex:A}
  It is instructive to interpret the previous result in some special
  case. Let $X=\T$ (with the standard metric), and suppose that the polynomial
  sequence on $\T$ is given by $p(n)=(n^d\alpha+q(n))\Z$ where $d\in \N$,
  $\alpha\in \R$, and $q\in\Z[x]$ with $\deg(q)\leq d-1$. In this case
  Theorem~\ref{T:GT} reads as follows: There exists $M>0$ such
  that for every $N\in\N$ and $\delta$ small enough,
  if the finite sequence $\big((n^d\alpha+q(n))\Z\big)_{1\leq n\leq N}$ is not
  $\delta$-equidistributed in $\T$, then $\norm{k\alpha}\leq
  M/N^d$ for some non-zero $k\in \Z$ with $|k|\leq M$.
\end{example}


\section{A model equidistribution result}\label{S:warmup}
Before delving into the proof of   the various equidistribution results on nilmanifolds  we find it instructive to
deal with a much simpler  equidistribution problem on the circle.
 This model problem  will motivate some of the ideas used later.
We shall give a new proof for the following  result:
\begin{theorem}[{\bf Boshernitzan} \cite{Bo2}]\label{T:Bos}
Let $a\in \H$ have  polynomial growth.

 Then the sequence $(a(n)\Z)_{n\in\N}$ is equidistributed in $\T$
if and only if  for every $p\in \Q[t]$  we have $|a(t)-p(t)|\succ \log t$.
\end{theorem}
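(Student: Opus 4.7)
My plan is to prove both directions via Weyl's criterion, using the block-Taylor-expansion strategy that foreshadows the paper's nilmanifold arguments. The ingredient distinguishing this approach from Boshernitzan's original is an appeal to the $X=\T$ case of the quantitative equidistribution theorem of Green--Tao (Theorem~\ref{T:GT}), which for $\T$ is nothing other than a quantitative Weyl inequality.

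For necessity, suppose $|a(t) - p(t)| \ll \log t$ for some $p \in \Q[t]$, and choose an integer $M$ so that $Mp(n) \in \Z$ for every $n \in \N$. Then $Ma(n) \equiv M(a(n) - p(n)) \pmod{1}$, and the Hardy function $b(t) := M(a(t) - p(t))$ satisfies $|b(t)| \ll \log t$. Since $b$ is eventually monotonic, either $b$ converges (and $(a(n)\Z)_{n\in\N}$ concentrates at a single point) or $b(t) \to \infty$; in the latter case the substitution $n = e^u$ shows that the fractional parts of $b(n)$ have an exponentially-weighted limiting distribution on $[0,1]$ (a Benford-type phenomenon), so $(a(n)\Z)_{n\in\N}$ is not equidistributed either way.

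For sufficiency, by Weyl's criterion and the invariance of the hypothesis under multiplication by any nonzero integer, it suffices to show $\frac{1}{N}\sum_{n \leq N} e(a(n)) \to 0$. Iterating Lemma~\ref{L:type}, write $a(t) = p_0(t) + r(t)$ with $p_0 \in \R[t]$ and $r \in \H$ either convergent or of pure type $l^+$ for some $l \geq 0$; after handling a convergent $r$ by absorbing it into $p_0$ (the hypothesis forces some coefficient of $p_0+\lim r$ to be irrational, and classical Weyl finishes) the main case is that $a$ itself has type $k^+$ for some $k \geq 0$. Partition $\{1,\ldots,N\}$ into consecutive blocks $I_j = [n_j, n_j+M)$ with $M = M(N) \to \infty$ chosen so that $M^{k+1} a(N)/N^{k+1} \to 0$, which is possible by Corollary~\ref{C:properties} given $a(t) \prec t^{k+1}$. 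On each block, Taylor-expand
\begin{equation*}
a(n_j + m) = P_j(m) + R_j(m), \qquad P_j(m) = \sum_{i=0}^{k} \frac{a^{(i)}(n_j)}{i!}\, m^i,
\end{equation*}
with $|R_j(m)| \ll |a^{(k+1)}(n_j)|\,M^{k+1} \ll a(n_j)\,M^{k+1}/n_j^{k+1} = o(1)$ uniformly in $m < M$, so that
\begin{equation*}
\frac{1}{N}\sum_{n \leq N} e(a(n)) = \frac{M}{N} \sum_j \frac{1}{M}\sum_{m < M} e(P_j(m)) + o(1).
\end{equation*}

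The delicate step, which I expect to be the main obstacle, is bounding the inner sums. The $X=\T$ case of Theorem~\ref{T:GT} says that whenever $|\frac{1}{M}\sum_{m<M} e(P_j(m))| \geq \delta$, there exist integers $1 \leq q \leq Q(\delta,k)$ and $1 \leq i \leq k$ with $\|q\, a^{(i)}(n_j)/i!\| \leq Q(\delta,k)/M^i$. Assuming for contradiction that a positive proportion of blocks satisfies this, I would pigeonhole a common pair $(q,i)$ and invoke the slow variation $|a^{(i)}(n_{j+1}) - a^{(i)}(n_j)| \ll M\, a(n_j)/n_j^{i+1}$ to conclude that the integer nearest to $q\,a^{(i)}(n_j)/i!$ is locally constant in $j$. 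A diagonal extraction as $N \to \infty$ should then produce fixed data $(q,i,K)$ such that $q\,a^{(i)}(t)/i! - K \to 0$ on a cofinal family of intervals; integrating $i$ times yields a polynomial $P \in \Q[t]$ of degree $i$ with $|a(t) - P(t)| \ll \log t$, contradicting the hypothesis. The subtle part is the integration: each antiderivative introduces a constant of integration that must itself be forced to be rational, which requires iterating the same Weyl analysis at lower order while controlling the accumulated error so that it remains $O(\log t)$.
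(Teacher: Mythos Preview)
Your overall plan---block Taylor expansion followed by quantitative Weyl (the $X=\T$ case of Theorem~\ref{T:GT})---is precisely the paper's, but you truncate the expansion at the wrong order, and the contradiction argument you sketch cannot close the resulting gap.

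For $a$ of type $k^+$ you expand to order $k$ and treat the $(k+1)$-st derivative as remainder. The problem is that for every $1\le i\le k$ one has $a^{(i)}(t)\sim a(t)/t^{i}\to\infty$ by Corollary~\ref{C:properties}. Hence when Theorem~\ref{T:GT} returns the obstruction $\lVert q\,a^{(i)}(n_j)/i!\rVert\le Q/M^{i}$, the nearest integer $K_j$ to $q\,a^{(i)}(n_j)/i!$ is itself tending to infinity with $j$; there is no fixed $K$ for a diagonal extraction to select, and so nothing to integrate back to a rational polynomial. (For $k=0$ the situation is worse still: your Taylor polynomial $P_j$ degenerates to the constant $a(n_j)$ and each block average has modulus $1$.) Incidentally, Theorem~\ref{T:GT} produces a single $q$ controlling \emph{all} coefficients simultaneously, not a pair $(q,i)$, though that is minor next to the nearest-integer issue.

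The paper's remedy is to expand one step further: take the Taylor polynomial of degree $m=k+1$, so that the leading coefficient is $a^{(k+1)}(N)/(k+1)!$, which tends to $0$ and hence satisfies $\lVert a^{(k+1)}(N)\rVert=|a^{(k+1)}(N)|$ for large $N$. The Green--Tao obstruction then reduces to the transparent growth condition $(l(N))^{k+1}|a^{(k+1)}(N)|\to\infty$, and the whole pigeonhole/extraction/integration machinery becomes unnecessary: one simply chooses a block length $l(N)\prec N$ with
\[
(l(N))^{k+2}\,|a^{(k+2)}(N)|\to 0\qquad\text{and}\qquad (l(N))^{k+1}\,|a^{(k+1)}(N)|\to\infty,
\]
and Lemma~\ref{L:cvb} shows that such an $l$ exists whenever $|a(t)-p(t)|\succ\log t$ for all $p\in\R[t]$. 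The residual case $a=p+e$, with $p\in\R[t]$ having an irrational non-constant coefficient and $e\ll\log t$, is handled separately via the well-distribution of $(p(n))$.

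Your necessity argument is correct in spirit; the paper gives a more hands-on version of the same idea: if $b(t)\ll\log t$ then $b(n+1)-b(n)\ll 1/n$, so immediately after $b(n)$ crosses an integer the fractional parts $\{b(n)\}$ remain below $1/2$ for a block of length comparable to $n$, which already rules out equidistribution along the subsequence of crossing times.
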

Our strategy  will be to use the Taylor expansion of the function $a(t)$ to partition the range of the sequence $(a(n))_{n\in\N}$ into blocks that
are approximately polynomial and then use classical results to  estimate the corresponding exponential sums over
these blocks.
This argument can be adapted to the non-Abelian setup we are interested in,
 the main reason being  that  ``Weyl type" sums involving polynomial block  sequences of fixed degree
on nilmanifolds can be effectively estimated
using a rather sophisticated application of the van der Corput difference trick (this is done in \cite{GT}), and with a
 bit of care  one can  piece together these estimates to get usable results. The following simple example best illustrates our method:
\begin{example}\label{Ex:nlogn}
Suppose that $a(t)=t \log{t}$. We shall show that the sequence  $(n\log{n}\, \Z)_{n\in \N}$ is equidistributed in $\T$.
Using Lemma~\ref{L:averaging} below, it suffices to show that for every non-zero integer $k$ we have
$$
\lim_{N\to \infty}\E_{N< n\leq  N+N^{\frac{3}{5}}}e(kn\log{n})=0.
$$
For convenience we assume that  $k=1$.

 Using the Taylor expansion of $a(t)$ around the point $x=N$ we see that for $n\in [1, N^{3/5}]$ we have
\begin{equation}\label{E:nlogn}
(N+n)\log(N+n)=N \log{N}+(\log{N}+1)n+\frac{n^2}{2N}+o_{N\to\infty}(1).
\end{equation}
(Notice that we keep track of the smallest order derivative
that converges to zero and drop higher order derivatives.)
Using \eqref{E:nlogn} one gets
\begin{equation}\label{E:hgv}
\E_{N< n\leq  N+N^{\frac{3}{5}}}e(n\log{n})= \E_{1\leq n\leq  N^{\frac{3}{5}}}e\Big(N \log{N}+(\log{N}+1)n+\frac{n^2}{2N}\Big)+o_{N\to\infty}(1).
\end{equation}
Since
$
(N^{\frac{3}{5}})^2\norm{\frac{1}{2N}}\to \infty
$,  using  Weyl's estimates  (see e.g. \cite{Va97}) we get
 that the averages in \eqref{E:hgv} converge to $0$ as $N\to \infty$.
\end{example}
\begin{lemma}\label{L:averaging}
Let $(a(n))_{n\in\N}$ be a bounded sequence of complex numbers.  Suppose that
$$
\lim_{N\to\infty}\big(\E_{N\leq n\leq N+l(N)}a(n)\big)=0
$$
for some positive function $l(t)$ with $l(t)\prec t$.
Then
$$
\lim_{N\to\infty} \E_{1\leq n\leq N} a(n)=0.
$$
\end{lemma}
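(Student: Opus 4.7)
The plan is to deduce the full Cesàro average $\E_{1\leq n\leq N}a(n)\to 0$ by partitioning $\{1,\ldots,N\}$ into consecutive blocks on each of which the hypothesis gives us control. Fix $\varepsilon>0$. By hypothesis, there exists $N_0$ such that for every $M\geq N_0$,
$$
\Big|\E_{M\leq n\leq M+l(M)}a(n)\Big|<\varepsilon,
$$
which, denoting $L(M):=[l(M)]+1$ (the number of terms), means
$
\big|\sum_{n=M}^{M+L(M)-1}a(n)\big|\leq \varepsilon\, L(M).
$

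Next, I would define a sequence of cut-points $M_0=N_0$ and $M_{k+1}=M_k+L(M_k)$, and, for a given large $N$, let $K$ be the largest index with $M_K\leq N$. Then
$$
\sum_{n=1}^{N}a(n)=\sum_{n=1}^{N_0-1}a(n)+\sum_{k=0}^{K-1}\sum_{n=M_k}^{M_{k+1}-1}a(n)+\sum_{n=M_K}^{N}a(n).
$$
The first piece is bounded by $N_0\,\|a\|_\infty$, an absolute constant. Each block in the middle sum has length $L(M_k)$ and, by the choice of $N_0$, contributes at most $\varepsilon\,L(M_k)$ in absolute value; summing, the middle piece is at most $\varepsilon(N-N_0)\leq \varepsilon N$. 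The final tail has at most $L(M_K)\leq L(N)\leq l(N)+1$ terms, so it is bounded by $(l(N)+1)\,\|a\|_\infty$.

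Dividing by $N$ and using the hypothesis $l(t)\prec t$ (so $l(N)/N\to 0$), we obtain
$$
\limsup_{N\to\infty}\Big|\E_{1\leq n\leq N}a(n)\Big|\leq \varepsilon,
$$
and since $\varepsilon$ was arbitrary the limit is $0$. There is no real obstacle here; the only thing to verify is that the recursive cut-points $M_k$ are well-defined and that the block decomposition truly covers $[N_0,N]$ up to a tail of length at most $l(N)+1$, which is immediate from the definition $M_{k+1}-M_k=L(M_k)$. The role of $l(t)\prec t$ is exactly to ensure that both the fixed initial segment $[1,N_0)$ and the final tail are of size $o(N)$.
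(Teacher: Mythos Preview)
Your approach is essentially the same as the paper's: cover $[1,N]$ by non-overlapping intervals of the form $[M,M+l(M)]$, use the hypothesis on each block, and use $l(t)\prec t$ together with boundedness of $(a(n))$ to absorb the initial segment and the final partial block.

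One small point: the inequality $L(M_K)\leq L(N)$ is not justified, since $l$ is not assumed monotone. The fix is immediate: since each $L(M_k)\geq 1$ we have $M_K\to\infty$ as $N\to\infty$, and since $M_K\leq N$ and $l(t)\prec t$, the tail length satisfies $L(M_K)/N\leq (l(M_K)+1)/M_K\to 0$, which is all you need.
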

\begin{proof}
We can cover the interval $[1,N]$ by a union $I_N$  of non-overlapping intervals of the
 form $[k,k+l(k)]$.
 Since $l(t)\prec t$ and the sequence
  $(a(n))_{n\in\N}$ is bounded, we  have that
$$
\lim_{N\to \infty} \E_{1\leq n\leq N}a(n)=\lim_{N\to \infty} \E_{n\in I_N}a(n).
$$
 Using our assumption, one easily gets that
 the limit $\lim_{N\to \infty} \E_{n\in I_N}a(n)$ is zero, finishing the proof.
\end{proof}
A  modification of the argument used in Example~\ref{Ex:nlogn} gives the following more general result:
\begin{lemma}\label{L:basicequi}
Suppose that for some $m\in \N$ the function $a\in C^{m+1}(\R_+)$ satisfies
$$
|a^{(m+1)}(t)| \text{ is decreasing}, \quad 1/t^m\prec a^{(m)}(t)\prec 1,
\quad    (a^{(m+1)}(t))^m\prec (a^{(m)}(t))^{m+1}.
$$

Then the sequence
  $(a(n)\Z)_{n\in\N}$ is equidistributed in $\T$.
\end{lemma}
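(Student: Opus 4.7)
The strategy is to invoke Weyl's criterion, which reduces the claim to showing that $\E_{1\leq n\leq N} e(ka(n)) \to 0$ for every nonzero $k \in \Z$. By Lemma~\ref{L:averaging}, it suffices to exhibit a positive function $L(t) \prec t$ such that $\E_{N< n\leq N+L(N)} e(ka(n)) \to 0$ as $N\to\infty$. On such a short window I would Taylor-expand $a$ at $N$ to order $m+1$, writing $a(N+n) = P_N(n) + R_N(n)$ with $P_N(n) = \sum_{i=0}^{m} a^{(i)}(N) n^i/i!$ and remainder bounded pointwise by $|R_N(n)| \leq |a^{(m+1)}(\xi)|\, n^{m+1}/(m+1)!$ for some $\xi \in [N,N+n]$; the monotonicity hypothesis on $|a^{(m+1)}|$ upgrades this to the uniform estimate $|R_N(n)| \leq |a^{(m+1)}(N)|\, L(N)^{m+1}/(m+1)!$ on $[1,L(N)]$. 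This mirrors the handling of $t\log t$ in Example~\ref{Ex:nlogn}.

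The choice of $L(N)$ must satisfy two opposing conditions. First, $L(N)^{m+1}|a^{(m+1)}(N)| \to 0$, so that the remainder contributes a uniform $o(1)$ error to $e(kR_N(n))$. Second, $L(N)^{m} |a^{(m)}(N)| \to \infty$, which is what is needed so that the Weyl sum of the polynomial part $kP_N(n)$ over $n \in [1,L(N)]$ tends to zero. A natural candidate is $L(N) = \omega(N)\cdot |a^{(m)}(N)|^{-1/m}$ for a sufficiently slowly-growing $\omega(N) \to \infty$. The compatibility hypothesis $(a^{(m+1)})^m \prec (a^{(m)})^{m+1}$ is precisely what makes the two constraints on $L(N)$ simultaneously realizable, while the growth hypothesis $a^{(m)}(t) \succ 1/t^m$ ensures $|a^{(m)}(N)|^{-1/m} \prec N$ and hence $L(N) \prec N$.

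For the vanishing of the Weyl sum of $kP_N$, I would appeal to the quantitative Green--Tao theorem in the special case $X=\T$ (see Theorem~\ref{T:GT} and Example~\ref{Ex:A}). The leading coefficient of $kP_N$ is $ka^{(m)}(N)/m!$; since $a^{(m)}(N) \to 0$, for large $N$ this coefficient lies arbitrarily close to $0$, so its distance to $\Z$ equals $|ka^{(m)}(N)|/m!$. The condition $L(N)^m |a^{(m)}(N)| \to \infty$ then rules out the failure bound \eqref{E:badequi} for every bounded horizontal character of $\T$, and hence $(kP_N(n)\Z)_{1\leq n\leq L(N)}$ is $\delta$-equidistributed for arbitrarily small fixed $\delta$ once $N$ is large. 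Combining with the uniform $o(1)$ error from the remainder gives $\E_{N<n\leq N+L(N)} e(ka(n)) \to 0$, and Lemma~\ref{L:averaging} then closes the argument.

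The main technical obstacle is the balancing of the window length $L(N)$: the interval must be short enough that Taylor's remainder is negligible, yet long enough that the polynomial block truly equidistributes on the circle. All three hypotheses are essential to this balancing act. The monotonicity of $|a^{(m+1)}|$ turns pointwise Taylor control into uniform control; the growth hypothesis $a^{(m)}(t) \succ t^{-m}$ guarantees $L(N) \prec N$; and the compatibility hypothesis $(a^{(m+1)})^m \prec (a^{(m)})^{m+1}$ is the exact algebraic relation that keeps the two opposing constraints on $L(N)$ from collapsing to an empty window. Once this window is identified, invoking a quantitative Weyl or Green--Tao input is routine.
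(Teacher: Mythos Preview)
Your proposal is correct and follows essentially the same approach as the paper's proof: reduce via Weyl's criterion and Lemma~\ref{L:averaging} to short windows, Taylor-expand to order $m$, use the monotonicity of $|a^{(m+1)}|$ to uniformly bound the remainder, and choose the window length $L(N)$ so that the two constraints $(L(N))^{m+1}|a^{(m+1)}(N)|\to 0$ and $(L(N))^m|a^{(m)}(N)|\to\infty$ are simultaneously met, with Example~\ref{Ex:A} (equivalently classical Weyl estimates) handling the polynomial block. The only cosmetic difference is that the paper first replaces $a$ by $ka$ (noting the hypotheses are preserved) to reduce to $k=1$, whereas you carry $k$ through; your explicit choice $L(N)=\omega(N)|a^{(m)}(N)|^{-1/m}$ is a concrete instance of the paper's existence argument for $l(t)$.
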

\begin{proof}
It suffices to show that for every non-zero integer $k$ we have
$$
\lim_{N\to\infty}\E_{1\leq n\leq N} e(ka(n))=0.
$$
Since our assumptions are also satisfied for $ka(t)$ in place of $a(t)$ whenever $k\neq 0$,
we can assume that $k=1$.

By Lemma~\ref{L:averaging} it is enough to show that the averages
\begin{equation}\label{E:bhn1}
\E_{N\leq n\leq N+l(N)} e(a(n))
\end{equation}
converge to zero as $N\to \infty$ for some positive function $l(t)$ that satisfies $l(t)\prec t$.\footnote{The choice
 of $l(t)$ will depend on the function $a(t)$.
For example, if $a(t)=t\log{t}$ we need to assume that  $t^{1/2}\prec l(t)\prec t^{2/3}$, and if  $a(t)=(\log{t})^2$ we need
to assume that $t/\log{t}\prec l(t)\prec t/\sqrt{\log{t}}$.}

 Using the Taylor expansion of $a(t)$ around the point $t=N$ we get
\begin{equation}\label{E:Taylor1}
a(N+n)=a(N)+na'(N)+\cdots +\frac{n^m}{m!}a^{(m)}(N)+\frac{n^{m+1}}{(m+1)!}a^{(m+1)}(\xi_n)
\end{equation}
for some $\xi_n\in [N,N+n]$. Since $|a^{(m+1)}(t)|$ is  decreasing we have $|a^{(m+1)}(\xi_n)|\leq |a^{(m+1)}(N)|$.
It follows that if $l(t)$  also satisfies
$$
(l(t))^{m+1} a^{(m+1)}(t)\prec 1,
$$
  then  the averages in \eqref{E:bhn1}
 are equal to
$$
 \E_{1\leq n\leq l(N)} e\Big(a(N)+na'(N)+\cdots +\frac{n^m}{m!}a^{(m)}(N)\Big) +o_{N\to\infty}(1).
$$
Next, using   Example~\ref{Ex:A}  (or Weyl's estimates; see e.g. \cite{Va97}) we get
 that the last averages converge to zero as $N\to \infty$ if
$$
1\prec
(l(t))^m \norm{a^{(m)}(t)}=
(l(t))^m |a^{(m)}(t)|,
$$
the last equality being valid for every large $t$  since $|a^{(m)}(t)|\to 0$.

Summarizing, we have shown that the averages in \eqref{E:bhn1} converge to zero when $N\to \infty$
as long as we can establish the existence of
a function
$l(t)$ satisfying the following conditions
\begin{equation}\label{E:pro}
l(t)\prec t\  \text{ and }   \    (l(t))^{m+1} |a^{(m+1)}(t)|\prec 1\prec (l(t))^m |a^{(m)}(t)|.
\end{equation}
Since by assumption $(a^{(m+1)}(t))^{\frac{m}{m+1}}\prec  a^{(m)}(t)$ and $1/t^m\prec a^{(m)}(t)$,
we can indeed find a function $l(t)$ that satisfies $\max\big((a^{(m+1)}(t)\big)^{\frac{m}{m+1}},1/t^m)\prec 1/(l(t))^m\prec |a^{(m)}(t)|$, and so \eqref{E:pro} holds.
This completes the proof.
\end{proof}
The previous lemma  applies to a wide variety of functions.
For example the functions  $(\log{t})^2$,  $t\log{t}$, $t^{3/2}$,
 $t^2\sqrt{2}+t^{1/2}$  satisfy the stated assumptions.
In fact our next lemma shows that Lemma~\ref{L:basicequi} comes rather close to establishing
Theorem~\ref{T:Bos}.
\begin{lemma}\label{L:cvb}
 Let $a\in\H$ have polynomial growth and satisfy $|a(t)-p(t)|\succ \log{t}$  for every $p\in\R[t]$.

  Then the function $a(t)$ satisfies the assumptions of Lemma~\ref{L:basicequi} for some $m\in\N$. As a consequence, the sequence
  $(a(n)\Z)_{n\in\N}$ is equidistributed in $\T$.
\end{lemma}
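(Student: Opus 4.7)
I would apply Lemma~\ref{L:basicequi} to $a$ after a decomposition $a = p + r$ in which $p \in \R[t]$ and $r \in \H$ has type $\mu^+$ for some integer $\mu \geq 0$. To obtain this decomposition, I iterate Lemma~\ref{L:type}: while the current remainder has type $k$ with $k \geq 1$, strip off its leading monomial $ct^k$, which leaves a remainder in $\H$ of strictly lower type. The hypothesis $|a(t)-q(t)| \succ \log t$ for every $q \in \R[t]$ rules out the degenerate stopping configurations in which the remainder converges to $0$ or to a nonzero constant: either outcome would force $|a - q'| \ll 1 \prec \log t$ for some $q' \in \R[t]$. The process therefore terminates after finitely many steps, and applying the hypothesis to the resulting polynomial $p$ yields $|r(t)| \succ \log t$.

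Next I set $m := \max(\mu, \deg p) + 1$, so that $p^{(m)} \equiv p^{(m+1)} \equiv 0$, giving $a^{(m)} = r^{(m)}$ and $a^{(m+1)} = r^{(m+1)}$. Corollary~\ref{C:properties} applied to $r$ yields $|r^{(m)}(t)| \ll r(t)/t^m \prec t^{\mu+1}/t^m \to 0$, since $m \geq \mu+1$; this verifies $a^{(m)}(t) \prec 1$. The same estimate for the $(m+1)$-th derivative, combined with Hardy-field monotonicity (both $r^{(m+1)}$ and $r^{(m+2)}$ lie in $\H$ and are eventually of constant sign, forcing $r^{(m+1)}$ to be monotone with limit $0$), yields that $|r^{(m+1)}|$ is eventually decreasing.

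The main obstacle is the lower bound $|r^{(m)}(t)| \succ 1/t^m$. The bound $r^{(l)} \succ r(t)/(t^l (\log t)^2)$ from Corollary~\ref{C:properties} is insufficient in the borderline case $\mu = 0$, where the hypothesis provides only $r \succ \log t$ and thus $r/(\log t)^2$ may tend to $0$. My plan is to replace this estimate by the L'Hopital rule in Hardy fields. When $\mu \geq 1$ the Corollary bound combined with $r \succ t^\mu$ directly gives $|r^{(\mu+1)}(t)| \succ 1/(t(\log t)^2) \succ 1/t^{\mu+1}$; when $\mu = 0$, L'Hopital applied to the $\infty/\infty$ quotient $r(t)/\log t$ yields $\lim_{t\to\infty} t r'(t) = \lim_{t\to\infty} r(t)/\log t = \infty$, hence $|r'(t)| \succ 1/t$. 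To propagate to higher orders, note that for every $l \geq \mu + 1$ the function $r^{(l)}$ tends to $0$, so L'Hopital applied in Hardy-field form to the $0/0$ quotient $r^{(l)}(t)/t^{-l}$ gives
\[
\lim_{t\to\infty} t^{l+1} r^{(l+1)}(t) \;=\; -l\,\lim_{t\to\infty} t^l r^{(l)}(t),
\]
yielding the inductive step $|r^{(l+1)}(t)| \succ 1/t^{l+1}$. Iterating up to $l = m$ delivers the required bound.

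The same identity produces the asymptotic $r^{(m+1)}(t) \sim -m\, r^{(m)}(t)/t$, so
\[
\frac{|r^{(m+1)}(t)|^m}{|r^{(m)}(t)|^{m+1}} \;\sim\; \frac{m^m}{t^m\,|r^{(m)}(t)|} \;\to\; 0,
\]
which is the remaining hypothesis of Lemma~\ref{L:basicequi}. With all three conditions verified, Lemma~\ref{L:basicequi} yields the claimed equidistribution of $(a(n)\Z)_{n\in\N}$ in $\T$. The hard step throughout is controlling the derivative from below in the regime $\mu = 0$, where the Corollary's uniform $(\log t)^2$ loss is too wasteful and must be replaced by Hardy-field L'Hopital applied directly to the pair $(r, \log t)$.
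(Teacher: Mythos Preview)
Your overall strategy coincides with the paper's: decompose $a=p+r$ with $r$ of type $\mu^+$, take $m=\max(\mu,\deg p)+1$ (which equals the paper's $k+1$), and verify the three hypotheses of Lemma~\ref{L:basicequi} for $a^{(m)}=r^{(m)}$. Your L'H\^opital iteration for propagating $|r^{(l)}|\succ 1/t^l$ from $l=\mu+1$ up to $l=m$ is correct and in fact spells out a step the paper compresses into ``arguing as before''.

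There is, however, a genuine gap in your last step. From the identity
\[
\lim_{t\to\infty} t^{l+1} r^{(l+1)}(t) \;=\; -l\,\lim_{t\to\infty} t^l r^{(l)}(t),
\]
when both sides are $\pm\infty$, you \emph{cannot} deduce the asymptotic $r^{(m+1)}(t)\sim -m\,r^{(m)}(t)/t$: that would require $\lim_{t\to\infty} t\,r^{(m+1)}(t)/r^{(m)}(t)=-m$, which is an entirely different statement. A counterexample is $r(t)=\mathrm{Li}(t)=\int_2^t ds/\log s$ (type $0^+$, and $r\succ\log t$): here $r'=1/\log t$, $r''=-1/(t(\log t)^2)$, so $t\,r''/r'=-1/\log t\to 0$, not $-1$. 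Thus your displayed computation of $|r^{(m+1)}|^m/|r^{(m)}|^{m+1}$ is unjustified as written.

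The repair is immediate and is precisely what the paper does: apply Lemma~\ref{L:properties}(ii) to $r^{(m)}$ (which tends to $0$ and satisfies $r^{(m)}\succ 1/t^m$) to get $r^{(m+1)}\ll r^{(m)}/t$. Then
\[
\frac{|r^{(m+1)}|^m}{|r^{(m)}|^{m+1}}\;\ll\;\frac{(|r^{(m)}|/t)^m}{|r^{(m)}|^{m+1}}\;=\;\frac{1}{t^m|r^{(m)}|}\;\to\;0,
\]
since $t^m|r^{(m)}|\to\infty$ by your lower bound. With this correction the proof is complete.
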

\begin{proof}
By Lemma~\ref{L:type} the function  $a(t)$ has type  $k$ or $k^+$ for some non-negative integer $k$.
We shall show that  the assumptions of Lemma~\ref{L:basicequi} are satisfied for
$m= k+1$. We can assume that the function  $a^{(k)}(t)$ is eventually positive,  if this  not the case
we work with the function  $-a(t)$.

Since $a(t)\prec t^{k+1}$, it follows  from  Corollary~\ref{C:properties} that the functions $a^{(k+1)}(t)$ and
 $a^{(k+2)}(t)$ converge to zero. Furthermore, since both functions are elements of $\H$  the convergence is monotone.

We show that $a^{(k+1)}(t)\succ 1/t^{k+1}$. Suppose first  that $a(t)$ has type $k^+$ for some positive integer $k$.
By Corollary~\ref{C:properties} we have
$$
a^{(k+1)}(t)\succ a(t)/(t^{k+1}(\log{t})^2)\gg 1/\big(t(\log{t})^2\big)\succ 1/t^{k+1}.
$$
Suppose now that  $a(t)$ has type  $0^+$, in which case we shall show that $a'(t)\succ 1/t$.
Arguing by contradiction, suppose that
this is not the case. Since $a'(t)$ is eventually positive,  we conclude that for large values of $t$ we have
$0\leq a'(t)\leq  c_1/t$ for some non-negative constant $c_1$.
 Integrating we get that for large values of $t$ we have  $0\leq a(t) \leq c_1\log{t}+c_2$ for some constants $c_1,c_2$, contradicting our assumption
 $|a(t)|\succ \log{t}$.
Lastly, suppose that $a(t)$ has type $k$ for some non-negative integer $k$. Since $a(t)$ stays away from polynomials,
we  conclude from Lemma~\ref{L:type} that    $a(t)=p(t)+b(t)$,
 for some $p\in \R[t]$ of degree $k$, and some $b\in \H$
 of type $l^+$ for some non-negative integer $l$ with $l<k$. Arguing
 as  before, we conclude that
  $b^{(k+1)}(t)\succ 1/t^{k+1}$. Since
 $a^{(k+1)}(t)=b^{(k+1)}(t)$, we get $a^{(k+1)}(t)\succ 1/t^{k+1}$.

It remains to show that
$(a^{(k+2)}(t))^{k+1}\prec(a^{(k+1)}(t))^{k+2}$.
 By  Lemma~\ref{L:properties}
we know that $a^{(k+2)}(t)\ll a^{(k+1)}(t)/t$.  Using this, and the previously established estimate
$a^{(k+1)}(t)\succ 1/t^{k+1},$ we get
$$
(a^{(k+2)}(t))^{k+1} \ll (a^{(k+1)}(t))^{k+1}/t^{k+1}\prec (a^{(k+1)}(t))^{k+2}.
$$
This completes the proof.
\end{proof}
We now complete the proof of Theorem~\ref{T:Bos}
\begin{proof}[Proof of Theorem~\ref{T:Bos}]
We first prove the sufficiency of the conditions.  Combining Lemma~\ref{L:basicequi} and   Lemma~\ref{L:cvb}
we cover the case where  $|a(t)-p(t)|\succ \log{t}$ for every $p\in \R[t]$.
It remains  to deal with the case where $a(t)=p(t)+e(t)$ for some $p\in\R[t]$ that has at
 least one non-constant coefficient irrational and $e(t)\ll \log{t}$. Since $e(n+1)-e(n)\to 0$ (this follows from the mean value theorem and the fact that $e'(t)\to 0$), we can write  $\N$ as a union of non-overlapping intervals $(I_m)_{m\in\N}$ such that  $|I_m|\to \infty$ and $\max_{n_1,n_2\in I_m}|e(n_1)-e(n_2)|\leq 1/m$. Combining this with the fact that the sequence $(p(n)\Z)_{n\in\N}$ is well distributed in $\T$
 (meaning $\lim_{N-M\to\infty} \E_{M\leq n\leq N} e(kp(n))=0$ for every non-zero $k\in\Z$), we deduce that the sequence $(a(n))_{n\in\N}$ is equidistributed in $\T$.

To prove the necessity of the conditions suppose that $ 1\prec a(t)\ll \log{t}$; the general case can be easily reduced to this one.
We shall show that the sequence $(a(n))_{n\in\N}$ cannot be equidistributed in $\T$.
The key property we shall use
is that  $a(n+1)-a(n)\ll 1/n$. (This
estimate is a consequence of the mean value theorem and the estimate
 $a'(t)\leq  c/t$ for large enough $t$ which can be proved as in  Lemma~\ref{L:properties}.)
 For convenience we assume that
 $a(n+1)-a(n)< 1/n$ is satisfied for every $n\in \N$, and the sequence $(a(n)\Z)_{n\in\N}$ is increasing. The general case is similar.
Arguing by contradiction, suppose that the sequence $(a(n)\Z)_{n\in\N}$ is equidistributed. Let
 $n_m$ be the first integer that satisfies $a(n_m)>m$. Since  $a(n_m)<a(n)<a(n_m)+ n/n_m$ and $a(n_m)$ is
 very close to an integer for large $m$,
  approximately all the integers in $[n_m,3n_m/2]$ satisfy $\{a(n)\}\leq 1/2$.
  Furthermore, because of the equidistribution
property, for large $m\in \N$,  approximately  half
of the integers in $[1,n_m]$
 satisfy $\{a(n)\}\leq 1/2$.  Therefore, for large $m\in\N$,  approximately two thirds of the integers
  in $[1,3n_m/2]$ satisfy $\{a(n)\}\leq 1/2$, contradicting our equidistribution assumption.
\end{proof}

\section{Single nil-orbits and Hardy  sequences}\label{S:single}
In this section we are going to prove Theorems~\ref{T:A} and \ref{T:B}.
\subsection{A reduction}\label{SS:reduction}
We start with some
initial maneuvers that will allow  us to reduce Theorem~\ref{T:B} to a more convenient statement.

First we give a
result that enables us to translate
  distributional  properties of sequences of the form  $(b^{a(n)}x)_{n\in\N}$ to sequences of the form $(b^{[a(n)]}x)_{n\in\N}$.
\begin{lemma}\label{L:reduction}
Let $(a(n))_{n\in\N}$ be a sequence of real numbers such that for every nilmanifold $X=G/\Gamma$, with $G$ connected and simply connected,
and  every $b\in G$,
the sequence $(b^{a(n)}\Gamma)_{n\in\N}$ is equidistributed in the nilmanifold $ \overline{(b^s\Gamma)}_{s\in \R}$.

Then for every nilmanifold $X=G/\Gamma$,  every   $b\in G$ and $x\in X$,
the sequence $(b^{[a(n)]}x)_{n\in\N}$ is equidistributed in the nilmanifold $ \overline{(b^nx)}_{n\in \N}$.
\end{lemma}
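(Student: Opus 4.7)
The plan is to lift to a connected and simply connected cover, then form an auxiliary product nilmanifold $\tilde X = X\times\T$ that simultaneously tracks $b^{a(n)}x$ and the fractional part $\{a(n)\}$, and finally recover the $[a(n)]$-distribution by integrating a Riemann-integrable test function whose discontinuity set has measure zero.

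First I would invoke the lifting argument of Section~\ref{SS:nil} to reduce to the case where $G$ is connected and simply connected; since the problem concerns the orbit of a single $b$ through a single $x$, this lift is harmless. Using the change of base point formula $b^{a(n)}(g\Gamma)=g(g^{-1}bg)^{a(n)}\Gamma$, the hypothesis (which is formulated at the identity coset) immediately extends to an arbitrary base point, so $(b^{a(n)}x)_{n\in\N}$ is equidistributed in $\overline{(b^sx)}_{s\in\R}$ for every $x$.

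The key step is to form $\tilde X = X\times\T = (G\times\R)/(\Gamma\times\Z)$, whose ambient group is still connected and simply connected, and take $\tilde b=(b,1)$, $\tilde x=(x,0)$, so that $\tilde b^{a(n)}\tilde x=(b^{a(n)}x,\,a(n)\Z)$ and $\tilde b^s\tilde x=(b^sx,\,s\Z)$. Applying the (extended) hypothesis on $\tilde X$ yields that $(\tilde b^{a(n)}\tilde x)_{n\in\N}$ is equidistributed in $\tilde Y:=\overline{(\tilde b^s\tilde x)}_{s\in\R}$. I would then identify the structure of $\tilde Y$: its projection to $\T$ is surjective and the pushforward of $m_{\tilde Y}$ is Haar (by invariance under the projected $\R$-action), while the fiber over $t\Z$ equals $b^tZ$ with $Z:=\overline{(b^nx)}_{n\in\N}$. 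The key point here is that $Z$, being a closed $b$-invariant union of sub-nilmanifolds, is automatically $b^{-1}$-invariant, so the backward orbit contributes nothing new. Uniqueness of the invariant measure on $\tilde Y$ then forces the disintegration
$$
m_{\tilde Y}=\int_0^1\bigl(m_{b^tZ}\otimes\delta_{t\Z}\bigr)\,dt,
$$
and in particular $m_{\tilde Y}(X\times\{0\Z\})=0$.

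Given $F\in C(X)$, I would test against $\Phi\colon\tilde X\to\C$ defined by $\Phi(y,t\Z)=F(b^{-\{t\}}y)$ with $\{t\}\in[0,1)$. This $\Phi$ is bounded and its discontinuity set lies in $X\times\{0\Z\}$, hence is $m_{\tilde Y}$-null; a standard sandwich argument extends the equidistribution from continuous functions to $\Phi$, giving
$$
\frac{1}{N}\sum_{n=1}^N F(b^{[a(n)]}x)=\frac{1}{N}\sum_{n=1}^N \Phi(b^{a(n)}x,\,a(n)\Z)\longrightarrow\int_{\tilde Y}\Phi\,dm_{\tilde Y}.
$$
The substitution $z=b^{-t}y$ in the disintegration computes this integral as
$$
\int_0^1\!\int_{b^tZ}F(b^{-t}y)\,dm_{b^tZ}(y)\,dt=\int_0^1\!\int_Z F\,dm_Z\,dt=\int_Z F\,dm_Z,
$$
showing that $(b^{[a(n)]}x)_{n\in\N}$ is equidistributed in $Z$. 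The hard part I expect is the rigorous identification of $\tilde Y$ and its Haar disintegration; once that is pinned down, the Riemann-integrability step is the routine upper/lower approximation of $\Phi$ by continuous functions on $\tilde X$.
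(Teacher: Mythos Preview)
Your proof is correct and follows essentially the same strategy as the paper: lift to a connected simply connected group, form the product nilmanifold $X\times\T$ with $\tilde b=(b,1)$, apply the hypothesis there, and test against the discontinuous function $\Phi(y,t\Z)=F(b^{-\{t\}}y)$ via a continuous-approximation (sandwich) argument.

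The one difference worth flagging is in the final step, where you identify the limit $\int_{\tilde Y}\Phi\,dm_{\tilde Y}$. You do this by working out the fibers of $\tilde Y$ over $\T$ and the explicit disintegration $m_{\tilde Y}=\int_0^1(m_{b^tZ}\otimes\delta_{t\Z})\,dt$, which (as you correctly anticipate) is the part requiring the most care. The paper sidesteps this computation: it simply pushes $m_{\tilde Y}$ forward to $Z=\overline{(b^nx)}_{n\in\N}$ under the map $(y,t\Z)\mapsto b^{-\{t\}}y$, checks that the resulting measure on $Z$ is $b$-invariant, and concludes it equals $m_Z$ by unique ergodicity of the nilrotation $b$ on its orbit closure. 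This gives the same answer without ever identifying the fibers of $\tilde Y$, and is a little quicker. (The paper also reduces to the base point $x=\Gamma$ at the outset rather than carrying a general $x$ via the conjugation formula as you do; either choice works.)
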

\begin{proof}

Let $X=G/\Gamma$ be a nilmanifold $b\in G$ and $x\in X$.
We start with some reductions. By using the lifting argument of Section~\ref{SS:nil}, we can assume that $G$
is connected and simply connected. Furthermore, by changing the base point and using the formula in Section~\ref{SS:nil}, we can assume that $x=\Gamma$.

 Let $X_b$ be the nilmanifold $\overline{(b^n\Gamma)}_{n\in\N}$
and $m_{X_b}$ be the corresponding normalized Haar measure. It suffices to  show that for every $F\in C(X)$ we have
\begin{equation}\label{E:[a(n)]}
\lim_{N\to\infty}\E_{1\leq n\leq N}F(b^{[a(n)]}\Gamma)=\int_{X_b} F\ dm_{X_b}.
\end{equation}

 So let $F\in C(X)$.
To begin with, we use our assumption in the following case
$$
\tilde{X}=\tilde{G}/\tilde{\Gamma} \ \text{ where }\
 \tilde{G}=\R\times G, \ \tilde{\Gamma}=\Z\times\Gamma, \ \text{
and }  \ \tilde{b}=(1,b).
$$ (Notice that $\tilde{G}$ is  connected and simply connected.)
We conclude that  for every $\tilde{H}\in C(\tilde{X})$
\begin{equation}\label{E:continuous}
\lim_{N\to\infty}\E_{1\leq n\leq N} \tilde{H}(\tilde{b}^{a(n)}\tilde{\Gamma})=\int_{\tilde{X}_{\tilde{b}}} \tilde{H} \ dm_{\tilde{X}_{\tilde{b}}},
\end{equation}
where $\tilde{X}_{\tilde{b}}$ is the nilmanifold $\overline{(s\Z,b^s\Gamma)}_{s\in\R}$, and  $m_{\tilde{X}_{\tilde{b}}}$ is the corresponding normalized Haar measure.

Next we claim that \eqref{E:continuous} can be applied for the  function $\tilde{F}\colon \tilde{X}\to \C$ defined by
 \begin{equation}\label{E:tilde{F}}
  \tilde{F}(t\Z,g\Gamma)=F(b^{-\{t\}}g\Gamma).
  \end{equation}
 We caution the reader that the function $\tilde{F}$ may be    discontinuous.
The set of discontinuities of $\tilde{F}$  is a subset of the sub-nilmanifold $\{\Z \}\times X$. Near
a point $(\Z,g\Gamma)$ of  $\{\Z \}\times X$ the function  $\tilde{F}$ comes close to the value
$F(g\Gamma)$ or the value $F(b^{-1}g\Gamma)$.
 For  $\delta>0$  (and smaller than $1/2$) there exist functions  $\tilde{F}_\delta\in C(\tilde{X})$ that agree with $\tilde{F}$ on $\tilde{X}_\delta=I_\delta\times X$, where $I_\delta=\{t\Z\colon \norm{t}\geq \delta\}$, and are uniformly bounded by $2\norm{F}_\infty$.
   Our assumption gives  that  the sequence $(a(n)\Z)_{n\in\N}$ is equidistributed in $\T$. Since $\tilde{b}^{a(n)}=(a(n),b^{a(n)})$,  we deduce that      $\tilde{b}^{a(n)}\tilde{\Gamma}\in \tilde{X}_\delta$ for a set of  $n\in\N$ with density $1-2\delta$. As a consequence,
\begin{equation}\label{E:approx0}
\limsup_{N\to\infty} \E_{1\leq n\leq N}|\tilde{F}(\tilde{b}^{a(n)}\tilde{\Gamma})- \tilde{F}_\delta(\tilde{b}^{a(n)}\tilde{\Gamma})|\leq 4\norm{F}_\infty\delta.
\end{equation}
By assumption, \eqref{E:continuous} holds when one uses the functions $\tilde{F}_\delta$ in place of the function $\tilde{H}$.  Using these identities for every $\delta>0$, and letting $\delta\to 0$, we get using    \eqref{E:approx0} that \eqref{E:continuous} also holds for the discontinuous function $\tilde{F}$  defined in \eqref{E:tilde{F}} (to get that
$\int \tilde{F}_\delta \ dm_{\tilde{X}_{\tilde{b}}}\to \int \tilde{F} \ dm_{\tilde{X}_{\tilde{b}}}$ we use that
$m_{\tilde{X}_{\tilde{b}}}(\{0\}\times X)=0$, which holds since $\{0\}\times X$ is a proper sub-nilmanifold of $\tilde{X}_{\tilde{b}}$). This  verifies our claim.


Applying \eqref{E:continuous} for the function $\tilde{F}$ defined in \eqref{E:tilde{F}}, and noticing that
$$
\tilde{F}(\tilde{b}^{a(n)}\tilde{\Gamma})=F(b^{-\{a(n)\}}b^{a(n)}\Gamma)=F(b^{[a(n)]}\Gamma),
$$
we get
$$
\lim_{n\to\infty}\E_{1\leq n\leq N}F(b^{[a(n)]}\Gamma)=\int_{\tilde{X}_{\tilde{b}}} \tilde{F} \ dm_{\tilde{X}_{\tilde{b}}}=\int_{\tilde{X}_{\tilde{b}}} F(b^{-\{s\}}g\Gamma) \ dm_{\tilde{X}_{\tilde{b}}}(s\Z,g\Gamma).
$$
Since $b^{-\{s\}}b^s\Gamma=b^{[s]}\Gamma$,  the map $(s\Z,g\Gamma)\rightarrow b^{-\{s\}}g\Gamma$ sends  the nilmanifold $\tilde{X}_{\tilde{b}}$ onto
the nilmanifold $X_b=\overline{(b^n\Gamma)}_{n\in\N}$. On   $X_b$  we define the measure $m$ by letting
$$
\int_{X_b}F\ dm=\int_{\tilde{X}_b} F(b^{-\{s\}}g\Gamma) \ dm_{\tilde{X}_{\tilde{b}}}(s\Z,g\Gamma)
$$
for every $F\in C(X_b)$. We claim that $m=m_{X_b}$. Indeed,
a quick computation shows that the measure $m$ is invariant under left translation by $b$.
As it is well known, any rotation
 $b$ is uniquely ergodic on its orbit closure $X_b$,  hence $m=m_{X_b}$. This establishes \eqref{E:[a(n)]}
 and completes the proof.
\end{proof}
The previous lemma shows that part $(ii)$ of Theorem~\ref{T:B} follows from part $(i)$.
 It turns out that dealing with part $(i)$ presents significant technical advantages (in fact we do not see how to
 establish part $(ii)$ directly).


Next we show that in order to prove part $(i)$ of Theorem~\ref{T:B} it suffices to establish the following result:
\begin{proposition}\label{P:B}
Let $a\in\H$ have polynomial growth and satisfy $|a(t)-cp(t)|\succ \log{t}$ for every $c\in \R$ and $p\in \Z[t]$.
Let $X=G/\Gamma$ be a nilmanifold, with $G$
connected and simply connected, and suppose that  $b\in G$  acts ergodically on $X$.

Then
 the sequence $(b^{a(n)}\Gamma)_{n\in\N}$ is  equidistributed in $X$.
\end{proposition}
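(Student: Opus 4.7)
The plan is to adapt the model argument from Section~\ref{S:warmup} (proof of Theorem~\ref{T:Bos}) to the non-Abelian setting, with Theorem~\ref{T:GT} of Green and Tao playing the role of Weyl's estimates. By the change-of-base-point formula in Section~\ref{SS:nil} I may assume $x=\Gamma$, and by density it suffices to verify the equidistribution identity for Lipschitz $F\in C(X)$. The type analysis used in the proofs of Lemmas~\ref{L:basicequi} and~\ref{L:cvb} supplies, from the Hardy-field hypothesis on $a$, an integer $m\geq 1$ and a positive scale $l(t)\prec t$ with $a^{(m)}(t)\to 0$ and
$$
l(t)^{m+1}|a^{(m+1)}(t)|\prec 1\prec l(t)^m|a^{(m)}(t)|.
$$
Lemma~\ref{L:averaging} reduces the problem to controlling the short-interval averages $\E_{N\leq n\leq N+l(N)} F(b^{a(n)}\Gamma)$ as $N\to\infty$.

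Taylor-expanding $a(N+n)=P_N(n)+e_N(n)$ about $t=N$ up to order $m$, where
$$
P_N(n)=\sum_{i=0}^m\frac{a^{(i)}(N)}{i!}n^i,\qquad |e_N(n)|\ll l(N)^{m+1}|a^{(m+1)}(N)|\to 0
$$
uniformly for $n\in[0,l(N)]$, and using the identity $b^{s+t}=b^sb^t$, the continuity of $s\mapsto b^s$, and the uniform continuity of $F$, the problem becomes to show that the polynomial sequences $g_N(n):=b^{P_N(n)}$ of degree at most $m$ satisfy
$$
\E_{1\leq n\leq l(N)}F(g_N(n)\Gamma)\longrightarrow \int F\,dm_X\qquad\text{as }N\to\infty.
$$

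I then argue by contradiction. If this convergence fails for some Lipschitz $F$, then there exist $\delta>0$ and an infinite set of $N$'s for which $(g_N(n)\Gamma)_{1\leq n\leq l(N)}$ is not $\delta$-equidistributed. Theorem~\ref{T:GT} supplies, for each such $N$, a non-trivial horizontal character $\chi_N$ with $\norm{\chi_N}\leq M=M_{X,m,\delta}$ satisfying $\norm{\chi_N(g_N(n))}_{C^\infty[l(N)]}\leq M$. Since only finitely many horizontal characters have frequency magnitude $\leq M$, I pigeonhole to a subsequence along which $\chi_N\equiv\chi$, and write $\chi(b^s)=e(\beta s)$, where $\beta=\kappa\cdot\alpha$ is determined by the frequency $\kappa\in\Z^l\setminus\{0\}$ of $\chi$ and the horizontal projection $\alpha\in\T^l$ of $b$. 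Ergodicity of $b$ (equivalently, the $\Q$-linear independence of $1,\alpha_1,\ldots,\alpha_l$) forces $\beta$ to be irrational. Expressing $P_N$ in the binomial basis $\binom{n}{i}$, the top-degree coefficient equals $a^{(m)}(N)$, and extracting the $i=m$ term of the $C^\infty[l(N)]$ norm yields
$$
l(N)^m\,\norm{\beta\,a^{(m)}(N)}\leq M.
$$
Since $\beta a^{(m)}(N)\to 0$, the norm equals $|\beta a^{(m)}(N)|$ for large $N$, hence $l(N)^m|a^{(m)}(N)|\leq M/|\beta|$, contradicting $l(N)^m|a^{(m)}(N)|\to\infty$.

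The main obstacle is the very first step: verifying that the hypothesis ``$|a(t)-cp(t)|\succ\log t$ for every $c\in\R$ and $p\in\Z[t]$'' really does produce $m$ and $l(t)$ with both growth bounds simultaneously. The case analysis splits according to whether $a$ has type $k^+$ (handled directly by Corollary~\ref{C:properties}) or type $k$ (where $a$ decomposes as a polynomial plus a residual $b\in\H$ of strictly lower type, and the Hardy-field hypothesis is used to rule out residuals with $b(t)\ll\log t$, in particular ruling out type $0^+$ residuals with $|b(t)|\ll\log t$). The non-integrality of the irrational $\beta$ produced by Theorem~\ref{T:GT} is absorbed into the implicit constant, so the same $l(t)$ that works for $a$ also contradicts the inequality for $\beta a$; the passage to the integer-part version in Theorem~\ref{T:B}(ii) then follows from Lemma~\ref{L:reduction}.
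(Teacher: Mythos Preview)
Your argument covers only the case $|a(t)-p(t)|\succ\log t$ for every $p\in\R[t]$; this is the hypothesis of Lemma~\ref{L:cvb}, and it is \emph{strictly stronger} than the hypothesis of Proposition~\ref{P:B}. The function $a(t)=\sqrt{2}\,t^{2}+\sqrt{3}\,t$ (one of the examples in \eqref{E:examples}) satisfies $|a(t)-cp(t)|\succ\log t$ for every $c\in\R$ and $p\in\Z[t]$, since no single real multiple of an integer polynomial can match both leading coefficients, yet $a$ is itself a real polynomial. So your claim that the hypothesis ``rules out residuals with $b(t)\ll\log t$'' is simply false. For this $a$ no integer $m$ meets your requirements: for $m\leq 2$ one has $a^{(m)}(t)\not\to 0$, while for $m\geq 3$ one has $a^{(m)}\equiv 0$, hence $l(N)^m|a^{(m)}(N)|=0$ and no contradiction can be extracted from the top coefficient. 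The same obstruction persists for any $a(t)=p(t)+e(t)$ with $p\in\R[t]$ not of the form $cq(t)+d$ ($c,d\in\R$, $q\in\Z[t]$) and $e(t)\ll\log t$.

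The paper handles precisely this residual case by a separate argument, Lemma~\ref{L:uio2}. There the scale is taken to be $l(N)=\sqrt{N}$, the error $e(N+n)-e(N)$ is disposed of via the mean value theorem, and the contradiction with Theorem~\ref{T:GT} is obtained from a \emph{lower-order} coefficient: since $p$ is not of the form $cq(t)+d$, for every nonzero $\beta$ coming from a horizontal character the polynomial $\beta\, p(t)$ has some non-constant coefficient that is irrational, and it is the fixed positive distance of that coefficient to $\Z$ that forces $\norm{\chi(g_N(n))}_{C^\infty[\sqrt{N}]}\to\infty$. Your extraction of only the degree-$m$ term cannot see this phenomenon (in the example above, with a suitable ergodic $b$ one can have $\beta\sqrt{2}\in\Z$, so the top coefficient contributes nothing). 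You need to split off the near-polynomial case and run a different endgame there, exactly as the paper does by combining Lemmas~\ref{L:uio} and~\ref{L:uio2}.
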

To carry out this reduction we shall use the following lemma:
\begin{lemma}\label{L:ergodic}
 Let  $X=G/\Gamma$ be a nilmanifold with $G$
connected and simply connected.

Then
for every $b\in G$ there exists $s_0\in \R$ such that  the element $b^{s_0}$ acts ergodically
 on the nilmanifold $ \overline{(b^s\Gamma)}_{s\in \R}$.
\end{lemma}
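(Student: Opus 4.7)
The plan is to reduce to the case where the nilflow $(b^s\Gamma)_{s\in\R}$ is already equidistributed in all of $X$, and then show that ergodicity of the single nilrotation $b^{s_0}$ on $X$ reduces, via passage to the horizontal torus, to a rational-independence condition on $s_0$ that fails only on a countable set.

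First I would invoke the flow version of Ratner's theorem stated in Section~\ref{SS:nil}: $Y_b=\overline{(b^s\Gamma)}_{s\in\R}$ has the form $H/\Delta$, where $H\leq G$ is closed, connected and simply connected, contains all $b^s$, and $\Delta=H\cap\Gamma$ is cocompact in $H$. Since we only need $b^{s_0}$ to act ergodically on $Y_b$, I replace $(G,\Gamma)$ by $(H,\Delta)$ and assume from now on that $Y_b=X$ and that the nilflow $(b^s\Gamma)_{s\in\R}$ is equidistributed in $X$. The group $G$ is still connected and simply connected, so its horizontal torus $Z=G/([G,G]\Gamma)$ is identifiable with some $\T^l$, and the one-parameter subgroup $(b^s)_{s\in\R}$ projects to $(s\alpha\bmod\Z^l)_{s\in\R}$ for some $\alpha=(\alpha_1,\ldots,\alpha_l)\in\R^l$. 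Density of the flow in $X$ forces density of its projection in $\T^l$, which is equivalent to the coordinates $\alpha_1,\ldots,\alpha_l$ being linearly independent over $\Q$.

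Next I would apply Theorem~\ref{T:L}(ii) to the degree-one polynomial sequence $n\mapsto b^{ns_0}$: the orbit $(b^{ns_0}\Gamma)_{n\in\N}$ is equidistributed (equivalently, dense, so $b^{s_0}$ is ergodic) in $X$ if and only if its horizontal projection $(ns_0\alpha\bmod\Z^l)_{n\in\N}$ is equidistributed in $\T^l$. By Weyl's criterion on tori, the latter holds precisely when $s_0\alpha_1,\ldots,s_0\alpha_l$ together with $1$ are linearly independent over $\Q$.

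It remains to show that this last condition is satisfied for some (in fact, for all but countably many) $s_0\in\R$. A nontrivial relation $q_0+s_0(q_1\alpha_1+\cdots+q_l\alpha_l)=0$ with $(q_0,q_1,\ldots,q_l)\in\Q^{l+1}\setminus\{0\}$ must have $q_0\neq 0$, since $\Q$-independence of $\alpha_1,\ldots,\alpha_l$ rules out $q_0=0$ unless $s_0=0$. For each fixed nonzero $(q_0,q_1,\ldots,q_l)\in\Q^{l+1}$ this pins $s_0$ to a single value, so the set of bad $s_0$ is countable; any $s_0$ outside this set (and different from $0$) gives an ergodic nilrotation $b^{s_0}$ on $X$. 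The only potential subtlety in the argument is ensuring the reduction via Ratner's theorem is legitimate, namely that $H$ is connected and simply connected so that $b^s$ is well-defined in $H$ and the horizontal torus of $H/\Delta$ is genuinely a torus; this is exactly what the Ratner statement recalled in Section~\ref{SS:nil} provides.
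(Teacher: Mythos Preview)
Your proof is correct and follows essentially the same route as the paper's: both reduce via the flow version of Ratner's theorem to the case $Y_b=X$, then use Theorem~\ref{T:L}(ii) to pass to the horizontal torus, and finally check that the resulting rational-independence condition on $s_0$ excludes only countably many values. The only difference is cosmetic: the paper phrases the toral step as requiring $\overline{(ns_0\beta\Z^k)}_{n\in\N}=\overline{(s\beta\Z^k)}_{s\in\R}$ for arbitrary $\beta$ and leaves the verification as a ``routine detail,'' whereas you explicitly use density of the flow to obtain $\Q$-independence of $\alpha_1,\ldots,\alpha_l$ and then spell out why the bad set of $s_0$ is countable.
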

\begin{proof}
By Ratner's theorem (see Section~\ref{SS:nil}), we have   $\overline{(b^s\Gamma)}_{s\in \R}=H/\Delta$, where  $H$ is a connected
and simply connected closed subgroup of $G$ that contains all the elements $b^s$, $s\in \R$, and $\Delta=H\cap\Gamma$.
By Theorem~\ref{T:L}  it suffices to check that  $b^{s_0}$ acts
ergodically on the horizontal torus $G/([G,G]|\Gamma)$, which we can assume to be $\T^k$ for some $k\in\N$.
 Equivalently, this amounts to showing that if $\beta\Z^k\in \T^k$, where $\beta\in \R^k$,
 then there exists $s_0\in\R$ such that
$\overline{(ns_0 \beta\Z^k)}_{n\in\N}=\overline{(s\beta\Z^k)}_{s\in\R}$. One can  check (we omit the routine details) that it  suffices to choose $s_0$ such that the number $1/s_0$ is rationally independent of any non-zero integer combination of the coordinates of $\beta$. This completes the proof.
\end{proof}
Putting together Lemma~\ref{L:reduction} and Lemma~\ref{L:ergodic} we get the advertised reduction:
\begin{proposition}\label{P:reduction}
In order to prove Theorem~\ref{T:B} it suffices to prove Proposition~\ref{P:B}.
\end{proposition}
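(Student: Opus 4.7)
The plan is to stitch together the three preceding lemmas, with no new technical input beyond a rescaling trick. First I would invoke Lemma~\ref{L:reduction} to reduce part $(ii)$ of Theorem~\ref{T:B} to part $(i)$: the hypothesis of that lemma is exactly the $x=\Gamma$ case of part $(i)$, applied to arbitrary nilmanifolds $X=G/\Gamma$ with $G$ connected and simply connected. So it remains to reduce part $(i)$ to Proposition~\ref{P:B}.

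For part $(i)$, let $X=G/\Gamma$ (with $G$ connected and simply connected), $b\in G$, and $x\in X$. I would first use the change-of-base-point formula recorded in Section~\ref{SS:nil} to reduce to $x=\Gamma$: if $x=g\Gamma$, then $b^{a(n)}x = g(g^{-1}bg)^{a(n)}\Gamma$, and the orbit closure $\overline{(b^sx)}_{s\in\R}$ equals the translate $g\cdot Y_{g^{-1}bg}$ of $Y_{g^{-1}bg}=\overline{((g^{-1}bg)^s\Gamma)}_{s\in\R}$. Since left translation by $g$ is a homeomorphism carrying the Haar measure of $Y_{g^{-1}bg}$ to that of its translate, equidistribution for the point $x$ and nilrotation $b$ is equivalent to equidistribution for the base point $\Gamma$ and nilrotation $g^{-1}bg$. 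So I may assume $x=\Gamma$.

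Next I would apply Ratner's theorem to write $Y_b=\overline{(b^s\Gamma)}_{s\in\R}=H/\Delta$ with $H$ a closed, connected, simply connected subgroup of $G$ containing all $b^s$ and $\Delta=H\cap\Gamma$; and then invoke Lemma~\ref{L:ergodic} applied inside $H$ to produce $s_0\in\R$ such that $b^{s_0}$ acts ergodically on $H/\Delta$. The key observation is now that the rescaled function $\tilde a(t)=a(t)/s_0$ still lies in $\H$, still has polynomial growth, and still satisfies
\[
|\tilde a(t)-cp(t)| = |a(t)-cs_0 p(t)|/|s_0|\succ \log t
\]
for every $c\in\R$ and every $p\in\Z[t]$, since $cs_0$ ranges over all of $\R$ as $c$ does. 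Proposition~\ref{P:B} applied to the nilmanifold $H/\Delta$, the ergodic nilrotation $b^{s_0}$, and the function $\tilde a$ then yields equidistribution of $((b^{s_0})^{\tilde a(n)}\Delta)_{n\in\N}=(b^{a(n)}\Delta)_{n\in\N}$ in $H/\Delta=Y_b$, which is part $(i)$.

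The argument is essentially organizational, so no step is a serious obstacle; the only point that requires genuine checking is that the stay-away-from-polynomials hypothesis is preserved under the rescaling $a\mapsto a/s_0$, which is the identity displayed above.
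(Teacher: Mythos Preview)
Your proposal is correct and follows essentially the same route as the paper: reduce part $(ii)$ to part $(i)$ via Lemma~\ref{L:reduction}, then for part $(i)$ pick $s_0$ from Lemma~\ref{L:ergodic} and apply Proposition~\ref{P:B} to $b^{s_0}$ with the rescaled function $a(t)/s_0$. You simply make explicit several points the paper leaves to the reader, namely the change-of-base-point reduction to $x=\Gamma$, the passage to the sub-nilmanifold $H/\Delta$ on which $b^{s_0}$ is ergodic, and the verification that the logarithmic-distance hypothesis survives the rescaling.
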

\begin{proof}
Using  Lemma~\ref{L:reduction}, we see that part $(ii)$ of Theorem~\ref{T:B} follows from part $(i)$.

To establish part $(i)$ we argue as follows. Let $b\in G$. By Lemma~\ref{L:ergodic} there exists non-zero
 $s_0\in\R$ such that the element $b^{s_0}$ acts ergodically
 on the nilmanifold $ \overline{(b^{s}\Gamma)}_{s\in \R}$. Using  Proposition~\ref{P:B} for the element $b^{s_0}$
 and the function $a(s)/{s_0}$, we get that the sequence $(b^{a(n)}\Gamma)_{n\in\N}$ is equidistributed in the nilmanifold
 $ \overline{(b^s\Gamma)}_{s\in \R}$.
\end{proof}

 We now turn our  attention   to the proof of Proposition~\ref{P:B}.


\subsection{Proof of Proposition~\ref{P:B}}
The following lemma is the key ingredient in the proof of  Proposition~\ref{P:B}:
\begin{lemma}\label{L:uio} Suppose that for some $k\in \N$ the function $a\in C^{k+1}(\R_+)$ satisfies
$$
|a^{(k+1)}(t)| \text{ is decreasing}, \quad 1/t^k\prec a^{(k)}(t)\prec 1,
\quad    (a^{(k+1)}(t))^k\prec (a^{(k)}(t))^{k+1}.
$$
  Let $X=G/\Gamma$ be a nilmanifold, with $G$
connected and simply connected, and suppose that  $b\in G$ acts ergodically on $X$.

Then
the sequence $(b^{a(n)}\Gamma)_{n\in\N}$ is  equidistributed in $X$.
\end{lemma}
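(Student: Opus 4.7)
The plan is to mimic the strategy of Lemma~\ref{L:basicequi}, replacing Weyl's estimates by the quantitative equidistribution theorem of Green and Tao (Theorem~\ref{T:GT}). Fix $F\in C(X)$. I would first choose a positive function $l(t)$ satisfying simultaneously
$$l(t)\prec t,\qquad l(t)^{k+1}|a^{(k+1)}(t)|\to 0,\qquad l(t)^{k}|a^{(k)}(t)|\to\infty.$$
The three hypotheses on $a$ are precisely what make this solvable: $(a^{(k+1)})^{k}\prec (a^{(k)})^{k+1}$ aligns the second and third constraints, while $1/t^{k}\prec a^{(k)}(t)$ ensures compatibility with $l(t)\prec t$. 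By Lemma~\ref{L:averaging} applied to the bounded sequence $F(b^{a(n)}\Gamma)-\int F\,dm_X$, it then suffices to show
$$\lim_{N\to\infty}\E_{1\leq n\leq l(N)}F\bigl(b^{a(N+n)}\Gamma\bigr)=\int F\,dm_X.$$

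Next I would Taylor-expand $a$ at $t=N$ and write $a(N+n)=p_N(n)+R_N(n)$ with
$$p_N(n)=\sum_{i=0}^{k}\frac{n^{i}}{i!}\,a^{(i)}(N),$$
so that monotonicity of $|a^{(k+1)}|$ bounds $|R_N(n)|$ by $\tfrac{l(N)^{k+1}}{(k+1)!}|a^{(k+1)}(N)|\to 0$ uniformly in $n\in[1,l(N)]$. Writing $b^{a(N+n)}=b^{p_N(n)}\cdot b^{R_N(n)}$ and invoking continuity of the one-parameter subgroup $s\mapsto b^s$ together with uniform continuity of $F$ on the compact space $X$, I may replace $F(b^{a(N+n)}\Gamma)$ by $F(b^{p_N(n)}\Gamma)$ at a cost of $o(1)$. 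Since
$$b^{p_N(n)}=\prod_{i=0}^{k}\bigl(b^{a^{(i)}(N)/i!}\bigr)^{n^{i}}$$
is a polynomial sequence in $G$ of degree at most $k$, the task reduces to showing that for every small $\delta>0$ and all sufficiently large $N$, the finite sequence $(b^{p_N(n)}\Gamma)_{1\leq n\leq l(N)}$ is $\delta$-equidistributed in $X$.

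I would prove this last claim by contradiction. If it fails along some subsequence $N_j\to\infty$, Theorem~\ref{T:GT} produces nontrivial horizontal characters $\chi_j$ with $\norm{\chi_j}\leq M$ and $\norm{\chi_j(b^{p_{N_j}(n)})}_{C^\infty[l(N_j)]}\leq M$ for some $M=M_{X,k,\delta}$. Only finitely many horizontal characters satisfy $\norm{\chi}\leq M$, so after passing to a further subsequence we may assume $\chi_j=\chi$ is a fixed nontrivial character. Writing $\chi(b^s)=e(\beta s)$, ergodicity of $b$ on $X$ — which via Theorem~\ref{T:L} is equivalent to ergodicity on the horizontal torus — forces $\beta\notin\Q$ and in particular $\beta\neq 0$. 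The real polynomial $\chi(b^{p_N(n)})=e(\beta p_N(n))$ has binomial-basis coefficient of $\binom{n}{k}$ equal to $\beta a^{(k)}(N)$, so the $C^\infty$ bound yields $l(N_j)^{k}\norm{\beta a^{(k)}(N_j)}\leq M$. Since $a^{(k)}(t)\prec 1$ forces $a^{(k)}(N)\to 0$, for large $j$ the distance to $\Z$ equals $|\beta a^{(k)}(N_j)|$, which contradicts our choice $l(t)^{k}|a^{(k)}(t)|\to\infty$.

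The main technical hurdle is passing cleanly from the Green--Tao conclusion — control of a single $C^\infty$ norm that mixes all binomial-basis coefficients of the horizontal-torus projection — to a usable statement about just the top coefficient. This requires first extracting a fixed character $\chi$ via the finiteness of $\{\chi:\norm{\chi}\leq M\}$, and then invoking ergodicity (together with connectedness and simple-connectedness of $G$, which identifies the horizontal torus with a standard $\T^l$) to rule out the rationality of $\chi(b)$. Once these pieces are in place, the rest of the argument is a direct transcription of the proof of Lemma~\ref{L:basicequi}.
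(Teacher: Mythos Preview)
Your proposal is correct and follows essentially the same route as the paper: Taylor-expand on blocks of length $l(N)$, reduce to the polynomial sequence $b^{p_N(n)}$, apply Theorem~\ref{T:GT}, and read off the degree-$k$ binomial coefficient $\beta a^{(k)}(N)$ to obtain a contradiction with $l(t)^k|a^{(k)}(t)|\to\infty$. The only cosmetic difference is that you pass to a subsequence to fix a single character $\chi$, whereas the paper keeps the whole finite set $B=\{\tfrac{1}{k!}\sum_i l_i\tilde\beta_i:|l_i|\le M\}$ of possible values of $\beta$ and rules them all out simultaneously; the two arguments are interchangeable.
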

\begin{proof}
Let $F\in C(X)$ with zero integral. We want  to show that
$$
\lim_{N\to\infty} \E_{1\leq n\leq N} F(b^{a(n)}\Gamma)=0.
$$
By Lemma~\ref{L:averaging}  it suffices to show that the averages
\begin{equation}\label{E:bhn1'}
 \E_{N\leq n\leq N+l(N)} F(b^{a(n)}\Gamma)
\end{equation}
converge to zero as $N\to \infty$ for some positive function $l(t)$ that satisfies $l(t)\prec t$.

Using the Taylor expansion of $a(t)$ around the point $x=N$ we have
\begin{equation}\label{E:Taylor1'}
a(N+n)=a(N)+na'(N)+\cdots +\frac{n^k}{k!}a^{(k)}(N)+\frac{n^{k+1}}{(k+1)!}a^{(k+1)}(\xi_n)
\end{equation}
for some $\xi_n\in [N,N+n]$. Since $|a^{(k+1)}(t)|$ is  decreasing we have $|a^{(k+1)}(\xi_n)|\leq |a^{(k+1)}(N)|$.
It follows that if the function $l(t)$ satisfies
$$
(l(t))^{k+1} a^{(k+1)}(t)\prec 1,
$$
 then the  averages
 \eqref{E:bhn1'} are equal to
$$
 \E_{1\leq n\leq l(N)} F\Big(b^{p_N(n)}\Gamma\Big) +o_{N\to\infty}(1)
$$
where
$$
p_N(n)=a(N)+na'(N)+\cdots +\frac{n^k}{k!}a^{(k)}(N).
$$
Our objective now is to show that for every $\delta>0$, for large values of  $N$, the finite sequence
$(b^{p_{N}(n)}\Gamma)_{1\leq n\leq l(N)}$  is $\delta$-equidistributed in $X$. This would immediately imply that
the averages in \eqref{E:bhn1'} converge to zero as $N\to \infty$.

So let $\delta>0$. Notice first that since
$$
b^{p_{N}(n)}=b_{0,N}\  \! b_{1,N}^n\cdots b_{k,N}^{n^k},
$$
where $b_{i,N}=b^{a^{(i)}(N)/i!}$ for $i=0,1,\ldots,k$, for every  fixed $N\in\N$ the sequence $(b^{p_{N}(n)})_{n\in\N}$
is a polynomial sequence in $G$.
 Since $X=G/\Gamma$ and  $G$ is connected and
simply connected, we can apply  Theorem~\ref{T:GT} (for $\delta$ small enough). We conclude that
if the finite sequence $(b^{p_{N}(n)}\Gamma)_{1\leq n\leq l(N)}$ is not $\delta$-equidistributed in $X$, then there exists
 a constant $M$ (depending only on $\delta$, $X$, and  $k$), and a horizontal character $\chi$ with $\norm{\chi}\leq M$ such that
\begin{equation}\label{E:C_R}
\norm{\chi(b^{p_N(n)})}_{C^\infty[l(N)]}\leq M.
\end{equation}
Let $\pi(b)=(\beta_1\Z,\ldots,\beta_s\Z)$, where $\beta_i\in \R$,  be the projection of $b$ on the horizontal torus $\T^s$ (notice that
$s$ is bounded by the dimension of $X$).
Since $b$ acts ergodically on $X$ the real numbers $1, \beta_1,\ldots,\beta_s$ must be rationally independent.
For $t\in \R$ we have $\pi(b^t)=(t\tilde{\beta}_1\Z,\ldots, t\tilde{\beta}_s\Z)$ for some
 $\tilde{\beta}_i\in \R$ with  $\tilde{\beta}_i\Z=\beta_i\Z$.
As a consequence, we have
$$
\chi(b^{p_N(n)})=e\Big(p_N(n) \sum_{i=1}^s l_i\tilde{\beta}_i\Big)
$$
for some $l_i\in\Z$ with $|l_i|\leq M$.
From this, the definition of $p_N(t)$, and the definition of $\norm{\cdot}_{C^\infty[N]}$ (see \eqref{E:norms}),  we get that
$$
\norm{\chi(b^{p_N(n)})}_{C^\infty[l(N)]}\geq (l(N))^k\norm{a^{(k)}(N)\beta},
$$
where  $\beta$ is a non-zero (we use the rational independence of the $\tilde{\beta}_i$'s here) real number that belongs to the finite set
$$
B=\Big\{\frac{1}{k!}\sum_{i=1}^s l_i\tilde{\beta}_i\colon |l_i|\leq M\Big\}.
$$
 Combining this estimate with \eqref{E:C_R}, and using that $\norm{a^{(k)}(N)\beta}=|a^{(k)}(N)\beta|$ for
 large $N$ (since by assumption $a^{(k)}(t)\to 0$), we get
\begin{equation}\label{E:bro}
(l(N))^k|a^{(k)}(N)\beta|\leq M
\end{equation}
for some $\beta\in B$. It follows that if the function $l(t)$ satisfies
$$
1\prec (l(t))^k a^{(k)}(t),
$$
then \eqref{E:bro} fails for large $N$, and as a result
 the finite sequence $(b^{p_{N}(n)}\Gamma)_{1\leq n\leq l(N)}$ is $\delta$-equidistributed
 in $X$ for every large $N$.

Summarizing, we have shown that  the averages \eqref{E:bhn1'} converge to $0$ when  $N\to \infty$,
 as long as we can find a positive function $l(t)$
 that satisfies the following growth conditions
$$
l(t)\prec t\  \text{ and }   \    (l(t))^{k+1} a^{(k+1)}(t)\prec 1\prec (l(t))^k a^{(k)}(t).
$$
As in the proof of Lemma~\ref{L:basicequi},  one checks that the  existence of such a function is guaranteed by our assumption, concluding the proof.
\end{proof}
Notice that by  Lemma~\ref{L:cvb} the previous result applies to every  function $a\in \H$ that has  polynomial growth
and satisfies  $|a(t)-p(t)|\succ \log{t}$ for every $p\in \R[t]$.
 In order to
deal with the remaining cases of Proposition~\ref{P:B} we  need one more lemma. Its proof follows the same strategy as in the
 proof of Lemma~\ref{L:uio}, so in order to avoid unnecessary repetition our argument will be rather sketchy.
\begin{lemma}\label{L:uio2}
Let $a\in \H$ satisfy  $a(t)=p(t)+e(t)$, where  $e(t)\ll \log{t}$, and $p\in\R[t]$ is not of  the form  $c q(t)+d$
 with $c,d\in \R$ and  $q\in \Z[t]$.  Let $X=G/\Gamma$ be a nilmanifold, with $G$
connected and simply connected, and suppose that  $b\in G$ acts ergodically on $X$.

Then
the sequence $(b^{a(n)}\Gamma)_{n\in\N}$ is  equidistributed in $X$.
\end{lemma}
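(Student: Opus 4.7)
My plan is to adapt the proof of Lemma~\ref{L:uio}: reduce to a polynomial block problem via Taylor expansion, apply Theorem~\ref{T:GT}, and then exploit the algebraic structure of $p$ to reach a contradiction. Write $p(t)=\sum_{i=0}^k A_i t^i$ with $A_k\neq 0$. Since $e\in\H$ with $e(t)\ll\log t$, iterating the argument of Lemma~\ref{L:properties}(ii) on $e$ and its successive derivatives yields $|e^{(j)}(t)|\ll(\log t)/t^j$ for every $j\geq 1$. By Lemma~\ref{L:averaging}, it suffices to show that for every $F\in C(X)$ with zero integral the block averages $\E_{1\leq n\leq l(N)}F(b^{a(N+n)}\Gamma)$ tend to $0$ as $N\to\infty$, for some positive $l(N)\prec N$ with $l(N)\to\infty$; I would take $l(N)=N^{1/2}$.

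Taylor expanding $a(N+n)$ around $N$ up to order $k$ gives $a(N+n)=p_N(n)+R_N(n)$, where $p_N(n)=\sum_{i=0}^k\frac{n^i}{i!}a^{(i)}(N)$ is polynomial of degree at most $k$. Since $p^{(k+1)}\equiv 0$, the remainder equals $\frac{n^{k+1}}{(k+1)!}e^{(k+1)}(\xi_n)$, which by the derivative estimate above is $o(1)$ uniformly for $n\in[1,l(N)]$. Uniform continuity of $F$ and of the map $s\mapsto b^s$ then lets me replace $F(b^{a(N+n)}\Gamma)$ by $F(b^{p_N(n)}\Gamma)$ at the cost of $o(1)$, reducing the problem to showing that the polynomial block sequence $(b^{p_N(n)}\Gamma)_{1\leq n\leq l(N)}$ is $\delta$-equidistributed in $X$ for every $\delta>0$ and all sufficiently large $N$.

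Suppose for contradiction this fails for some $\delta_0>0$ along a subsequence $N_j\to\infty$. Applying Theorem~\ref{T:GT} and invoking the ergodicity of $b$ exactly as in Lemma~\ref{L:uio}, I would extract by finiteness a fixed horizontal character $\chi$ with $\norm{\chi}\leq M$ and a fixed nonzero $\beta$ (from the finite set $B$ introduced in Lemma~\ref{L:uio}) such that $\norm{e(p_{N_j}(n)\beta)}_{C^\infty[l(N_j)]}\leq M$ for every $j$. Writing $p_N(n)\beta=\sum_{i=0}^k\alpha_i(N)\binom{n}{i}$ in the binomial basis, a direct computation using the Stirling change of basis together with $a^{(i)}(N)=p^{(i)}(N)+e^{(i)}(N)$ yields
$\alpha_i(N)=i!\,A_i\beta+\sum_{j>i}(A_j\beta)\,B_{i,j}(N)+O((\log N)/N^i),$
where each $B_{i,j}$ is an explicit polynomial in $N$ taking integer values on $\Z$. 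The constraint $\norm{\alpha_i(N_j)}\leq M/l(N_j)^i$ combined with $l(N)=N^{1/2}$ then forces $\norm{i!\,A_i\beta+\sum_{j>i}(A_j\beta)B_{i,j}(N_j)}\to 0$.

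I now induct on $i$ from $k$ down to $1$ to prove $A_i\beta\in\Q$. The base case $i=k$ is immediate: the sum over $j>i$ is empty, so the constraint forces $k!\,A_k\beta\in\Z$. For the inductive step, by hypothesis each $A_j\beta$ with $j>i$ is rational with denominators dividing some fixed integer $D$, so $\sum_{j>i}(A_j\beta)B_{i,j}(N)\bmod\Z$ takes only finitely many values; passing to a subsubsequence on which this value is constant forces $i!\,A_i\beta\in\Q$. Clearing a common denominator of all $A_i\beta$ for $1\leq i\leq k$ yields an identity $p(t)=c\,q(t)+d$ with $c,d\in\R$ and $q\in\Z[t]$, contradicting the hypothesis on $p$. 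The hard part will be the inductive bookkeeping with the Stirling coefficients and the repeated subsequence extractions; the Taylor approximation and the continuity reductions are routine given the derivative bounds on $e$.
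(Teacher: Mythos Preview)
Your approach is correct and shares the paper's overall strategy (block averages via Lemma~\ref{L:averaging}, polynomial approximation, then Theorem~\ref{T:GT}), but the paper's execution of the two key steps is more economical. First, rather than Taylor-expanding $a$ to order $k$, the paper applies only the mean value theorem to $e$: since $e'(t)\prec t^{-1+\varepsilon}$, one has $e(N+n)=e(N)+o(1)$ uniformly for $n\leq\sqrt N$, so the block polynomial is simply $p(N+n)+e(N)$. This sidesteps your bound $|e^{(j)}(t)|\ll(\log t)/t^j$, whose derivation from Lemma~\ref{L:properties}(ii) requires verifying that lemma's side hypotheses (no nonzero limit, $\succ t^{-k}$) at each stage of the iteration. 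Second, and more substantively, the paper avoids your downward induction and repeated subsequence extractions: writing $p(t)=\sum_j\tilde A_j\binom{t}{j}$ and using Vandermonde, the binomial coefficients of $\gamma\, p(N+n)$ are $\alpha_j(N)=\gamma\tilde A_j+\sum_{m>j}\gamma\tilde A_m\binom{N}{m-j}$; if every $\gamma\tilde A_m\in\Z$ then $\gamma(p-p(0))$ is integer-valued, forcing $p=cq+d$ with $q\in\Z[t]$, so there is a \emph{largest} index $j_0$ with $\gamma\tilde A_{j_0}\notin\Z$, and for that $j_0$ the tail sum is an integer, giving $\norm{\alpha_{j_0}(N)}=\norm{\gamma\tilde A_{j_0}}$ a fixed positive constant and hence $l(N)^{j_0}\norm{\alpha_{j_0}(N)}\to\infty$ in one stroke. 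Your induction reaches an equivalent endpoint ($\gamma A_i\in\Q$ for all $i\geq1$) but by a longer route; what it buys is that it never leaves the monomial basis, at the cost of the bookkeeping you flagged as the hard part.
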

\begin{remark}
Our argument  can easily be adapted to cover  every function  $e\in \H$ that satisfies $e(t)\prec t$, but
 the case treated suffices for our purposes.
\end{remark}
\begin{proof}
Arguing as in Lemma~\ref{L:uio}, it suffices to show that for every $F\in C(X)$ with zero integral, the averages
\begin{equation}\label{E:bhn1''}
 \E_{N\leq n\leq N+\sqrt{N}} F(b^{a(n)}\Gamma)
\end{equation}
converge to zero as $N\to \infty$.

  Using Lemma~\ref{L:properties} we conclude that the function $|e'(t)|$ is decreasing and  $e'(t)\prec 1/t^{1-\varepsilon}$
  for every $\varepsilon>0$. Using the mean value theorem we conclude that for $n\in [1,\sqrt{N}]$ we have
  $$
  e(N+n)=e(N)+o_{N\to \infty}(1)
  $$
  and as a result the averages in
  \eqref{E:bhn1''}
  are equal to
$$
 \E_{1\leq n\leq \sqrt{N} } F\Big(b^{p(N+n)+e(N)}
\Gamma\Big)+o_{N\to\infty}(1).
$$
Hence, our proof will be complete if we show  that
 for every $\delta>0$, and every large $N$, the finite sequence
$(b^{p(N+n)+e(N)}\Gamma)_{1\leq n\leq \sqrt{N}}$  is $\delta$-equidistributed in $X$.
Suppose that this is not the case.
We are going to  use Theorem~\ref{T:GT} to derive a contradiction.
The key property  to be used is  that for every non-zero real number $\beta$ the polynomial $\beta p(t)$ has at least
one non-constant coefficient irrational.  Arguing as in  Lemma~\ref{L:uio}, we deduce
 that there exists a constant $M$ (depending only on $\delta$, $X$
and the degree of $p$), and a finite set $B$ of  irrational numbers, such that for infinitely many positive integers
 $N$ we have
$$
\sqrt{N}\norm{\beta}\leq M
$$
for some $\beta\in B$. This is a contradiction and the proof is complete.
\end{proof}

Combining the last two lemmas it is now easy to prove Proposition~\ref{P:B}.
\begin{proof}[Proof of Proposition~\ref{P:B} (Conclusion of proof of Theorem~\ref{T:B})]
Using Lemmas~\ref{L:cvb} and \ref{L:uio} we cover  the case where $|a(t)-p(t)|\succ \log{t}$
for every $p\in \R[t]$.
The remaining cases are covered by  Lemma~\ref{L:uio2}.
\end{proof}

Next we prove Theorem~\ref{T:A}. It is   a direct consequence of Theorem~\ref{T:B} and  the following lemma:
\begin{lemma}\label{L:polynomial}
Let $a\in \H$ satisfy $a(t)-p(t)\to 0$ for some $p\in \R[t]$.

Then the sequences $(a(n))_{n\in\N}$
and $([a(n)])_{n\in\N}$ are pointwise good for nilsystems.
\end{lemma}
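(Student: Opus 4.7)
The plan is to write $a(n) = p(n) + e(n)$ with $e(n) \to 0$ and, throughout, fix a nilmanifold $X = G/\Gamma$ with $G$ connected and simply connected, $b \in G$, $x \in X$, and $F \in C(X)$; the goal is to show that $\frac{1}{N}\sum_{n=1}^N F(b^{a(n)}x)$ and $\frac{1}{N}\sum_{n=1}^N F(b^{[a(n)]}x)$ both converge as $N \to \infty$.

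For the first sequence I would note that $n \mapsto b^{p(n)}$ is a polynomial sequence in $G$ (the operation $s \mapsto b^s$ is defined for all $s \in \R$ because $G$ is connected and simply connected), so Theorem~\ref{T:L} makes $(b^{p(n)}\Gamma)_{n\in\N}$ equidistributed in a finite union of sub-nilmanifolds; in particular $\lim_N \frac{1}{N}\sum_{n=1}^N F(b^{p(n)}x)$ exists. Then $b^{a(n)}x = b^{e(n)} b^{p(n)} x$, $s \mapsto b^s$ is continuous at $0$, and $F$ is uniformly continuous on the compact space $X$, so $|F(b^{a(n)}x) - F(b^{p(n)}x)| \to 0$ and the two Cesaro limits coincide.

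For $([a(n)])$ I would pass to the lifted nilmanifold $\tilde X = \tilde G/\tilde \Gamma$ with $\tilde G = \R \times G$, $\tilde \Gamma = \Z \times \Gamma$, $\tilde b = (1, b)$, $\tilde x = (\Z, x)$, exactly as in the proof of Lemma~\ref{L:reduction}, and set $\tilde F(t\Z, g\Gamma) = F(b^{-\{t\}} g\Gamma)$ so that $\tilde F(\tilde b^{a(n)}\tilde x) = F(b^{[a(n)]} x)$. Since $\tilde G$ is connected and simply connected, the first part of the lemma applied to $\tilde X$ ensures convergence of $\frac{1}{N}\sum_{n=1}^N \tilde H(\tilde b^{a(n)}\tilde x)$ for every $\tilde H \in C(\tilde X)$. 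The obstruction is that $\tilde F$ is only continuous off $\{0\} \times X$, and I would handle this by splitting on the distribution of $(p(n)\Z)_{n\in\N}$ in $\T$.

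If some non-constant coefficient of $p$ is irrational, then Weyl's theorem makes both $(p(n)\Z)$ and $(a(n)\Z)$ equidistributed in $\T$, so the density of $\{n \leq N \colon \norm{a(n)} < \delta\}$ tends to $2\delta$; sandwiching $\tilde F$ between continuous $\tilde F_\delta^{\pm}$ that agree with $\tilde F$ outside a $\delta$-tube around $\{0\} \times X$ and letting $\delta \to 0$ then yields convergence. In the remaining case where all non-constant coefficients of $p$ are rational, $\{p(n)\}$ is periodic of some period $L$, and on each arithmetic progression $\{n = Lm + s\}$ one can write $p(Lm + s) = P_s(m) + c_s$ with $P_s$ an integer-valued polynomial of degree $\deg p$ and $c_s \in \R$ constant; since $e(n) \to 0$, for $m$ large we get $[a(Lm + s)] = P_s(m) + k_s$ for some integer constant $k_s$, and Theorem~\ref{T:L} applied to the integer polynomial sequence $(b^{P_s(m)})_{m\in\N}$ gives convergence of the Cesaro average on this progression, so summing over the $L$ residues completes the argument. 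I expect this second sub-case to be the main obstacle, since the limiting distribution of $(a(n)\Z)$ on $\T$ may have atoms at $0$ that defeat the naive sandwich argument, and the arithmetic progression decomposition is the cleanest way to sidestep it.
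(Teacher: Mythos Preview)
Your proposal is correct and follows essentially the same route as the paper's proof. Both arguments handle $(a(n))$ by observing that $b^{p(n)}$ is a polynomial sequence in $G$ and invoking Theorem~\ref{T:L}, then use continuity and $e(n)\to 0$; both handle $([a(n)])$ by splitting on whether $p$ has an irrational non-constant coefficient, using the product nilmanifold $\tilde X=(\R\times G)/(\Z\times\Gamma)$ and the discontinuous lift $\tilde F$ in the irrational case, and decomposing into arithmetic progressions in the rational case. The only differences are cosmetic: the paper disposes of the rational-coefficient case \emph{before} introducing $\tilde X$, and in the irrational case it uses a single continuous approximant $\tilde F_\delta$ together with a Cauchy-sequence argument rather than a two-sided sandwich (which is slightly cleaner for complex-valued $F$). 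One small point worth making explicit in your rational case: the claim that $[a(Lm+s)]=P_s(m)+k_s$ for large $m$ is immediate when $c_s\notin\Z$, but when $c_s\in\Z$ it relies on $e=a-p$ having eventually constant sign, which holds because $e\in\H$ (there is a Hardy field containing both $a$ and the polynomial $p$). The paper glosses over the same point.
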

\begin{proof}
Let $X=G/\Gamma$ be a nilmanifold, with $G$ connected and simply connected, and   $b\in G$.
The sequence $(a(n))_{n\in\N}$ is pointwise good for nilsystems if and only if the same holds for the
sequence $(p(n))_{n\in\N}$. Let $p(t)=c_0+c_1t+\cdots +c_kt^k$ for some non-negative integer $k$ and $c_i\in \R$.
Since  $b^{p(n)}=b_0 \cdot b_1^n \cdot \ldots \cdot b_k^{n^k}$, where $b_i=b^{c_i}$, we have that
$(b^{p(n)})_{n\in \N}$ is a polynomial sequence in $G$. It follows by Theorem~\ref{T:L} that the sequence $(p(n))_{n\in\N}$
is pointwise good for nilsystems.

Next we deal with the sequence  $([a(n)])_{n\in\N}$.
Suppose first that  $p(t)-p(0)\in \Q[t]$. Then $p(t)=\frac{1}{r}\tilde{p}(t)+c$ for some $r\in \N$, $c\in\R$, and $\tilde{p}\in\Z[t]$
with $p(0)=0$.
For $i=0,\ldots,r-1$ we have  $[a(rn+i)]=q_i(n)$ for some $q_i\in \Z[t]$.
Using this, the result follows from Theorem~\ref{T:L}.

It remains to deal with the case where  the polynomial $p$ has an irrational non-constant coefficient.
We are going to use a strategy similar to the
one used  in the proof of
 Lemma~\ref{L:reduction}.
Let
$$
\tilde{X}=\tilde{G}/\tilde{\Gamma} \ \text{ where }\
 \tilde{G}=\R\times G, \ \tilde{\Gamma}=\Z\times\Gamma, \ \text{
and }  \ \tilde{b}=(1,b).
$$
 Given   $F\in C(X)$ we define $\tilde{F}\colon \tilde{X}\to \C$  by
  $$
  \tilde{F}(t\Z,g\Gamma)=F(b^{-\{t\}}g\Gamma).
  $$
  (We caution the reader that $\tilde{F}$ may not be continuous).
Notice that
  $$
F(b^{[a(n)]}\Gamma)=F(b^{-\{a(n)\}}b^{a(n)}\Gamma)=\tilde{F}(\tilde{b}^{a(n)}\tilde{\Gamma}),
$$
and as a result it suffices to show that the averages
\begin{equation}\label{E:tilde}
\E_{1\leq n\leq N} \tilde{F}(\tilde{b}^{a(n)}\tilde{\Gamma})
\end{equation}
converge as $N\to\infty$. We verify this as follows.  For  $\delta>0$ (and smaller than $1/2$) there exist functions  $\tilde{F}_\delta\in C(\tilde{X})$ that agree with $\tilde{F}$ on $\tilde{X}_\delta=I_\delta \times X$, where
$I_\delta=\big\{t\Z\colon \norm{t}\geq \delta \big\}$, and are uniformly bounded by $2\norm{F}_\infty$. Since the polynomial $p$ has a non-constant irrational coefficient,   the sequence $(p(n)\Z)_{n\in\N}$ is equidistributed in $\T$, and as a result     $\tilde{b}^{p(n)}\tilde{\Gamma}\in \tilde{X}_\delta$ for a set of  $n\in\N$ with density $1-2\delta$. It follows that
\begin{equation}\label{E:approx}
\limsup_{N\to\infty} \E_{1\leq n\leq N}|\tilde{F}(\tilde{b}^{a(n)}\tilde{\Gamma})- \tilde{F}_\delta(\tilde{b}^{a(n)}\tilde{\Gamma})|\leq 4\norm{F}_\infty\delta.
\end{equation}
  As shown in the first part of our proof, the sequence $(a(n))_{n\in\N}$ is pointwise good for nilsystems, hence  the averages \eqref{E:tilde} converge when one uses the functions $\tilde{F}_\delta$ in place of the function $\tilde{F}$.  Using this and \eqref{E:approx}, we deduce that the
 averages in \eqref{E:tilde} form a Cauchy sequence, and hence they converge as $N\to \infty$. This proves that
the sequence  $([a(n)])_{n\in\N}$ is  pointwise good for nilsystems and completes the proof.
\end{proof}
\begin{proof}[Proof of Theorem~\ref{T:A}]
The sufficiency of the conditions follows immediately  from Theorem~\ref{T:B} and Lemma~\ref{L:polynomial},
with the exception of the case where $|a(t)-t/m|\ll \log{t}$ for some non-zero integer $m$.  As noticed in \cite{BKQW} (proof of Theorem~3.3), this last case
is easily reduced to the case $a(t)=t$. In this particular instance the result is well known  (e.g. \cite{L2}).

The necessity of the conditions can be seen by working with rational rotations on the circle,
for the details see \cite{BKQW}.
\end{proof}

\section{Several nil-orbits and Hardy sequences}\label{S:multiple}
In this section we shall prove Theorem~\ref{T:several}. A crucial part of our argument will
 be  different than the one used to prove of Theorem~\ref{T:B}, so we find it
 instructive to  start with a model equidistribution problem that illustrates the key technical difference.
\subsection{A model equidistribution problem}\label{SS:model2}
We shall give yet another proof of the following  special case of Theorem~\ref{T:Bos}:

``If
$a\in\H$ satisfies $(t \log t)  \prec a(t)\prec t^2$, then
the sequence $(a(n))_{n\in\N}$ is equidistributed in $\T$."

We  shall take the following fact for granted:

``If $a\in\H$ satisfies $ \log t \prec a(t)\prec t$, then
the sequence $(a(n))_{n\in\N}$ is equidistributed in $\T$."

 So suppose that  $a\in\H$ satisfies $(t \log t)  \prec a(t)\prec t^2$.
It suffices to show that for every non-zero $k\in\Z$ we have
\begin{equation}\label{EE:j}
\lim_{N\to\infty} \E_{1\leq n\leq N} e(ka(n))=0.
\end{equation}
For convenience we assume that $k=1$.
For every fixed  $R\in \N$ we have
\begin{equation}\label{E:Rn+r}
\E_{1\leq n\leq  RN} e(a(n))=
\E_{1\leq n\leq N} \big(\E_{1\leq r\leq R} \ e(a(Rn+r))\big)+o_{N\to\infty}(1).
\end{equation}
For $n=1,2,\ldots$, we use the Taylor expansion of
 $a(t)$ around the point $t=Rn$. Since $a''(t)\to 0$ (by Lemma~\ref{L:properties}),
we get for $ r\in [1, R]$ that
$$
a(Rn+r)=a(Rn)+ra'(Rn)+o_{n\to\infty;R}(1).
$$
It follows that  the averages in \eqref{E:Rn+r} are equal to
\begin{equation}\label{E:Rn}
\E_{1\leq n\leq N} A_{R,n}+o_{N\to\infty;R}(1), \text{ where }
A_{R,n}=\E_{1\leq r\leq R}\ e(a(Rn)+ra'(Rn)).
\end{equation}
For fixed $\varepsilon>0$ we split  the averages  $\E_{1\leq n\leq N} |A_{R,n}|$  as follows
$$
\E_{1\leq n\leq N} ({\bf 1}_{n\colon \norm{a'(Rn)}\leq \varepsilon}\cdot  |A_{R,n}|)
+ \E_{1\leq n\leq  N} ({\bf 1}_{n\colon \norm{a'(Rn)}> \varepsilon}\cdot  |A_{R,n}|)=\Sigma_{1,R,N,\varepsilon}+\Sigma_{2,R,N,\varepsilon}.
$$

We estimate $\Sigma_{1,R,N,\varepsilon}$. By Lemma~\ref{L:properties} we have that
 $\log t\prec a'(Rt)\prec t$. It follows that the sequence $(a'(Rn)\Z)_{n\in\N}$
is equidistributed in $\T$, and as a consequence
$$
\frac{|1\leq n\leq N\colon \norm{a'(Rn)}\leq \varepsilon|}{N}=2\varepsilon+o_{N\to\infty;R}(1).
$$
Therefore,  $\Sigma_{1,R,N,\varepsilon}\leq 2\varepsilon+o_{N\to\infty;R}(1)$.

We estimate $\Sigma_{2,R,N,\varepsilon}$.  We have
$$
|A_{R,n}|=|\E_{1\leq r\leq R}\ e(ra'(Rn))|.
$$
We estimate the geometric series in the standard fashion; computing the sum and  using the estimate $|\sin{\pi t}| \geq 2\norm{t}$, we find  that (whenever $a'(Rn)$ is not an integer)
$$
|A_{R,n}|\leq \frac{1}{2R\norm{a'(Rn)}}.
$$
 It follows that $\Sigma_{2,R,N,\varepsilon}\leq 1/(2R\varepsilon)$.

Combining the estimates for $\Sigma_{1,R,N,\varepsilon}$ and $\Sigma_{2,R,N,\varepsilon}$ we get
$$
\E_{1\leq n\leq N} |A_{R,n}|\leq 2\varepsilon+\frac{1}{2R\varepsilon}+o_{N\to\infty;R}(1).
$$
Letting first $N\to\infty$, then $R\to \infty$, and then $\varepsilon \to 0$, we get
 $$
\lim_{R\to\infty}\lim_{N\to\infty}\E_{1\leq n\leq N} |A_{R,n}|=0.
$$
  As explained before, this implies \eqref{EE:j}
 and completes the proof.

\subsection{A reduction}
As was the case with  the proof of Theorem~\ref{T:B}, we start with some
initial maneuvers that enable us to reduce Theorem~\ref{T:several} to a more convenient  statement.
Since this step can
be completed with straightforward modifications of the arguments used in Section~\ref{SS:reduction},
we omit the proofs.

First notice that in order to prove Theorem~\ref{T:several} we can assume that $X_1=\dots=X_{\ell}=X$.
 Indeed, consider the nilmanifold $\tilde{X}=X_1\times\cdots\times X_{\ell}$.
Then $\tilde{X}=\tilde{G}/\tilde{\Gamma}$, where $\tilde{G}=G_1\times\cdots\times G_{\ell}$  is connected
and simply connected, $\tilde{\Gamma}=\Gamma_1\times\cdots\times \Gamma_{\ell}$  is a discrete cocompact
subgroup of $\tilde{G}$,
each $b_i$  can be thought of as an element of $\tilde{G}$, and each  $x_i$ as an element of $\tilde{X}$.

\begin{lemma}\label{L:reduction'}
Let $(a_1(n))_{n\in\N}, \ldots, (a_{\ell}(n))_{n\in\N}$ be  sequences of real numbers.
Suppose that   for every nilmanifold $X=G/\Gamma$, with
 $G$ connected and simply connected,  and every $b_1,\ldots,b_{\ell}\in G$,
the sequence $$
(b^{a_1(n)}_1\Gamma,\ldots,b^{a_{\ell}(n)}_{\ell}\Gamma)_{n\in\N}
$$ is equidistributed in the
nilmanifold
$\overline{(b^s_1\Gamma)}_{s\in \R}\times\cdots\times \overline{(b^s_{\ell}\Gamma)}_{s\in \R}$.

Then for every nilmanifold $X=G/\Gamma$,  every   $b_1,\ldots,b_{\ell}\in G$, and $x_1,\ldots,x_{\ell}\in X$,
the sequence
$$
(b^{[a_1(n)]}_1x_1,\ldots,b^{[a_{\ell}(n)]}_{\ell}x_{\ell})_{n\in\N}
$$ is equidistributed in the nilmanifold
 $\overline{(b^n_1x_1)}_{n\in \N}\times\cdots\times \overline{(b^n_{\ell}x_{\ell})}_{n\in \N}$.
\end{lemma}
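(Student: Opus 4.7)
The plan is to adapt the argument of Lemma~\ref{L:reduction} to the multi-orbit setting by running $\ell$ coordinate-wise copies of the single-orbit trick in parallel on a common product-type extension. As in that proof, I would first use the lifting trick of Section~\ref{SS:nil} to assume that $G$ is connected and simply connected, and then invoke the change of base point formula to reduce to the case $x_1 = \cdots = x_\ell = \Gamma$; this modifies each $b_i$ by a conjugation, but preserves orbit closures up to a fixed left-translation.

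Next I would introduce the lifted data: let $\tilde{G} = \R^\ell \times G$, $\tilde{\Gamma} = \Z^\ell \times \Gamma$, $\tilde{X} = \tilde{G}/\tilde{\Gamma}$, and for each $i = 1, \ldots, \ell$ set $\tilde{b}_i = (e_i, b_i) \in \tilde{G}$, where $e_i$ is the $i$-th standard basis vector of $\R^\ell$. Since $\tilde{G}$ is connected and simply connected, the hypothesis applies to $\tilde{X}$ and the elements $\tilde{b}_1, \ldots, \tilde{b}_\ell$ and yields that the sequence $(\tilde{b}_1^{a_1(n)}\tilde{\Gamma}, \ldots, \tilde{b}_\ell^{a_\ell(n)}\tilde{\Gamma})_{n \in \N}$ is equidistributed in $\tilde{Y}_1 \times \cdots \times \tilde{Y}_\ell$, where $\tilde{Y}_i = \overline{(\tilde{b}_i^s\tilde{\Gamma})}_{s\in\R}$. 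By Stone--Weierstrass it suffices to verify the desired conclusion against test functions of the form $F_1 \otimes \cdots \otimes F_\ell$ with each $F_i \in C(X)$.

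Given such $F_i$, define as in Lemma~\ref{L:reduction} the possibly discontinuous functions $\tilde{F}_i\colon \tilde{X} \to \C$ by $\tilde{F}_i(t_1\Z, \ldots, t_\ell\Z, g\Gamma) = F_i(b_i^{-\{t_i\}}g\Gamma)$, so that $\tilde{F}_i(\tilde{b}_i^{a_i(n)}\tilde{\Gamma}) = F_i(b_i^{[a_i(n)]}\Gamma)$. Each $\tilde{F}_i$ is discontinuous only on the proper sub-nilmanifold $\{t_i \in \Z\}$ of $\tilde{X}$, and can be approximated by uniformly bounded continuous functions $\tilde{F}_{i,\delta} \in C(\tilde{X})$ that agree with $\tilde{F}_i$ outside a $\delta$-neighborhood of this set. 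The hypothesis applied in the elementary case $X = \T$, $G = \R$, $\Gamma = \Z$, $b = 1$ (or Theorem~\ref{T:A}) shows that each $(a_i(n)\Z)_{n\in\N}$ is equidistributed on $\T$, and a union bound over $i$ gives
$$
\limsup_{N\to\infty} \E_{1\leq n\leq N} \Bigl|\prod_{i=1}^\ell \tilde{F}_i(\tilde{b}_i^{a_i(n)}\tilde{\Gamma}) - \prod_{i=1}^\ell \tilde{F}_{i,\delta}(\tilde{b}_i^{a_i(n)}\tilde{\Gamma})\Bigr| \ll_{F_1,\ldots,F_\ell} \ell\delta.
$$
Applying the equidistribution statement from the hypothesis to the continuous product function $\prod_i \tilde{F}_{i,\delta}$ and letting $\delta \to 0$, the averages $\E_{1\leq n\leq N}\prod_i F_i(b_i^{[a_i(n)]}\Gamma)$ converge to the Fubini-factored integral $\prod_{i=1}^\ell \int_{\tilde{Y}_i}\tilde{F}_i\, dm_{\tilde{Y}_i}$; here one uses that $\{t_i \in \Z\}$ is $m_{\tilde{Y}_i}$-null, since $\tilde{Y}_i$ surjects onto the $i$-th circle factor and this set is a proper sub-nilmanifold of $\tilde{Y}_i$.

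The main point that remains, and the one I expect to be the principal technical obstacle, is to identify each factor $\int_{\tilde{Y}_i}\tilde{F}_i\, dm_{\tilde{Y}_i}$ with $\int_{X_{b_i}} F_i\, dm_{X_{b_i}}$, where $X_{b_i} = \overline{(b_i^n\Gamma)}_{n\in\N}$. This is precisely the measure-pushforward argument carried out at the end of Lemma~\ref{L:reduction}: the map $(t_1\Z,\ldots,t_\ell\Z, g\Gamma) \mapsto b_i^{-\{t_i\}} g\Gamma$ sends $\tilde{Y}_i$ onto $X_{b_i}$, and the pushforward of $m_{\tilde{Y}_i}$, being $b_i$-invariant on $X_{b_i}$, must agree with $m_{X_{b_i}}$ by unique ergodicity of $b_i$ on its orbit closure. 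Chaining these single-coordinate identifications across $i = 1, \ldots, \ell$ gives the equality of the limit with $\int \prod_i F_i\, d(m_{X_{b_1}} \otimes \cdots \otimes m_{X_{b_\ell}})$, which finishes the proof once the Stone--Weierstrass reduction is invoked.
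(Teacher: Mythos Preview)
Your proposal is correct and is precisely the ``straightforward modification'' of the proof of Lemma~\ref{L:reduction} that the paper alludes to (the paper omits the proof entirely). One cosmetic remark: the extension $\tilde{G}=\R^\ell\times G$ with $\tilde{b}_i=(e_i,b_i)$ works, but the extra torus coordinates are inert, since $\tilde{Y}_i$ sits over a single circle; the more economical choice $\tilde{G}=\R\times G$ with $\tilde{b}_i=(1,b_i)$ already suffices and makes the coordinate-wise reduction to the single-orbit measure identification of Lemma~\ref{L:reduction} transparent.
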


The previous lemma shows that part $(ii)$ of Theorem~\ref{T:several} follows from part $(i)$.

\begin{lemma}\label{L:ergodic'}
 Let  $X=G/\Gamma$ be a nilmanifold with $G$
connected and simply connected.

Then for every
 $b_1,\ldots,b_{\ell}\in G$ there exists $s_0\in \R$ such that for $i=1,\ldots,\ell$  the element
  $b^{s_0}_i$ acts ergodically
 on the nilmanifold $ \overline{(b^s_i\Gamma)}_{s\in \R}$.
\end{lemma}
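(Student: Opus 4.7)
The plan is to adapt the argument of Lemma~\ref{L:ergodic} to the simultaneous setting, exploiting the fact that the constraints imposed on $s_0$ by each individual $b_i$ amount to avoiding a countable subset of $\R$; then all $\ell$ constraints can be satisfied at once.

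Concretely, for each $i \in \{1,\ldots,\ell\}$ I would first invoke Ratner's theorem (nilpotent case, recalled in Section~\ref{SS:nil}) to write $Y_i := \overline{(b_i^s\Gamma)}_{s\in\R} = H_i/\Delta_i$, where $H_i \subseteq G$ is a closed, connected, and simply connected subgroup containing every $b_i^s$, and $\Delta_i = H_i \cap \Gamma$ is discrete cocompact in $H_i$. By Theorem~\ref{T:L}(ii), $b_i^{s_0}$ acts ergodically on $Y_i$ iff its image in the horizontal torus $Z_i := H_i/([H_i,H_i]\Delta_i) \cong \T^{k_i}$ is equidistributed in $Z_i$. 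Letting $\beta_i \in \R^{k_i}$ denote the ``velocity'' of the one-parameter subgroup $(b_i^s)_{s\in\R}$ on $Z_i$, this reduces to the requirement that
$$
\overline{\{ns_0\beta_i + \Z^{k_i} : n\in\N\}} = \overline{\{s\beta_i + \Z^{k_i} : s\in\R\}}
$$
for every $i$.

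As observed in the proof of Lemma~\ref{L:ergodic}, the $i$-th density holds provided $1/s_0$ is rationally independent of every non-zero integer linear combination of the coordinates of $\beta_i$. For fixed $i$ and fixed $(n_1,\ldots,n_{k_i}) \in \Z^{k_i}\setminus\{0\}$, the values of $1/s_0$ violating this independence lie in the countable set $\Q + \Q \cdot \sum_j n_j \beta_i^{(j)}$. The union of these bad sets over $i = 1,\ldots,\ell$ and over all non-zero integer vectors remains a countable subset of $\R$, so any $s_0$ whose reciprocal avoids it will work for all $b_i$ simultaneously.

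I do not foresee a serious obstacle here: the proof of Lemma~\ref{L:ergodic} already treats each nilrotation in isolation, and the only additional ingredient needed is the elementary observation that a finite union of countable sets is countable. Thus the argument is essentially a verbatim repetition of the single-element case, run index by index.
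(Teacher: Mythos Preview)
Your proposal is correct and is exactly the ``straightforward modification'' the paper has in mind: the paper omits the proof of Lemma~\ref{L:ergodic'} entirely, referring back to Section~\ref{SS:reduction}, and your argument---running Lemma~\ref{L:ergodic} for each $b_i$ and observing that the union of the countably many bad values of $1/s_0$ remains countable---is precisely that modification. (A minor quibble: the bad set for a fixed nonzero combination $\alpha$ is $\Q\cdot\alpha$ rather than $\Q+\Q\cdot\alpha$, but either way it is countable, so the argument is unaffected.)
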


Using Lemmas~\ref{L:reduction'} and \ref{L:ergodic'}, we see as in section Section~\ref{SS:reduction},
that Theorem~\ref{T:several} reduces to
proving the following result:
\begin{proposition}\label{P:B'}
  Suppose that the functions $a_1(t),\ldots,a_{\ell}(t)$ belong to the same Hardy field, have different growth rates, and satisfy $t^k\log{t}\prec a_i(t)\prec t^{k+1}$ for some $k=k_i\in \N$.

Then given nilmanifolds $X_i=G_i/\Gamma_i$, $i=1,\ldots, \ell$, with
$G_i$ connected and simply connected, and elements
$b_1,\ldots,b_{\ell}\in G_i$ acting ergodically on $X_i$, the
sequence
$$
({b}^{a_1(n)}_1\Gamma_1,\ldots,{b}^{a_{\ell}(n)}_{\ell}\Gamma_\ell)_{n\in\N}
$$ is equidistributed in the nilmanifold $ X_1\times \cdots \times X_\ell$.
\end{proposition}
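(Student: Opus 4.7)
The plan is to adapt the two-scale averaging strategy from Section~\ref{SS:model2} to the product nilmanifold $X^\ell$, with Theorem~\ref{T:GT} of Green and Tao as the quantitative equidistribution input. Fix $F\in C(X^\ell)$ with $\int F\,dm_{X^\ell}=0$; the goal is to show $\E_{1\le n\le N}F\bigl(b_1^{a_1(n)}\Gamma,\ldots,b_\ell^{a_\ell(n)}\Gamma\bigr)\to 0$. For large fixed $R\in\N$, I would rewrite $\E_{1\le n\le RL}$ as the double average $\E_{1\le m\le L}\E_{1\le r\le R}$ up to a boundary error, and then Taylor-expand each $a_i$ around $t=Rm$ to order $k_i$. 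Corollary~\ref{C:properties} gives $a_i^{(k_i+1)}(Rm)\to 0$, so for $r\in[1,R]$ the remainder is $o_{m\to\infty;R}(1)$; uniform continuity of $F$ then lets us replace the inner average by
$$
I(R,m):=\E_{1\le r\le R}F\bigl(g_{R,m}(r)\Gamma^\ell\bigr),\qquad g_{R,m}(r):=\bigl(b_i^{p_{R,m,i}(r)}\bigr)_{i=1}^\ell,
$$
where $p_{R,m,i}(r)=\sum_{j=0}^{k_i}(r^j/j!)\,a_i^{(j)}(Rm)$. The key point is that $(g_{R,m}(r))_r$ is a polynomial sequence in the connected and simply connected Lie group $G^\ell$ of degree at most $K:=\max_i k_i$, so Theorem~\ref{T:GT} applies uniformly in $m$.

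Given $\delta>0$ small, Theorem~\ref{T:GT} applied to $X^\ell$ with degree $K$ produces a constant $M(\delta)$ with the following dichotomy: either $(g_{R,m}(r)\Gamma^\ell)_{1\le r\le R}$ is $\delta$-equidistributed in $X^\ell$ — so that $|I(R,m)|\le\delta\norm{F}_{\mathrm{Lip}}$ — or some non-trivial horizontal character $\chi$ of $G^\ell$ with $\norm{\chi}\le M(\delta)$ satisfies $\norm{\chi(g_{R,m}(r))}_{C^\infty[R]}\le M(\delta)$. A horizontal character of $G^\ell$ factors as $\chi=\prod_i\chi_i$ with each $\chi_i$ a horizontal character of $G$; set $S:=\{i:\chi_i\text{ non-trivial}\}\ne\emptyset$. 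Ergodicity of each $b_i$ forces $\chi_i(b_i^t)=e(t\beta_i)$ with $\beta_i$ irrational for $i\in S$, so
$$
\chi(g_{R,m}(r))=e(P_{R,m}(r)),\qquad P_{R,m}(r)=\sum_{i\in S}\beta_i\,p_{R,m,i}(r),
$$
a polynomial in $r$ of degree $K_S:=\max_{i\in S}k_i\ge 1$ whose coefficient of $r^j$ is $c_j(R,m):=\sum_{i\in S,\,k_i\ge j}a_i^{(j)}(Rm)\beta_i/j!$. After converting from standard to binomial basis, the Green--Tao bound implies in particular $R^{K_S}\norm{K_S!\,c_{K_S}(R,m)}\le M(\delta)$.

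The decisive step is the analysis of $c_{K_S}(R,m)$. By Corollary~\ref{C:properties}, $a_i^{(K_S)}(Rm)\sim a_i(Rm)/(Rm)^{K_S}$ for every $i\in S$ with $k_i=K_S$, so the different growth rates hypothesis singles out a unique dominant index $i_0$ and $c_{K_S}(R,t)\sim\beta_{i_0}a_{i_0}^{(K_S)}(Rt)/K_S!$. The hypothesis $t^{K_S}\log t\prec a_{i_0}(t)\prec t^{K_S+1}$ yields $\log t\prec a_{i_0}^{(K_S)}(Rt)\prec t$; combined with $\beta_{i_0}\notin\Q$ this gives $|c_{K_S}(R,t)-p(t)|\succ\log t$ for every $p\in\Q[t]$. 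Theorem~\ref{T:Bos} applied to the Hardy-field element $K_S!\,c_{K_S}(R,\,\cdot\,)$ then shows that $(K_S!\,c_{K_S}(R,m)\,\Z)_{m\in\N}$ is equidistributed in $\T$, whence the set $\bigl\{m\le L:\norm{K_S!\,c_{K_S}(R,m)}\le M(\delta)/R^{K_S}\bigr\}$ has density $O(M(\delta)/R^{K_S})+o_{L\to\infty;R}(1)$. A union bound over the finitely many $\chi$ with $\norm{\chi}\le M(\delta)$ then gives a ``bad'' set of $m$ (on which $I(R,m)$ is not $\delta$-controlled) of density $\le C(\delta)/R+o_{L\to\infty;R}(1)$. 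Combining with the trivial bound on the bad set,
$$
\bigl|\E_{1\le n\le RL}F(\ldots)\bigr|\le\delta\norm{F}_{\mathrm{Lip}}+\frac{C(\delta)\norm{F}_\infty}{R}+o_{L\to\infty;R}(1),
$$
and sending $L\to\infty$, then $R\to\infty$, then $\delta\to 0$ completes the proof.

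I expect the main obstacle to lie in this coefficient analysis: isolating the dominant summand of $c_{K_S}(R,m)$, verifying that the resulting Hardy-field asymptotic stays logarithmically far from $\Q[t]$ so that Theorem~\ref{T:Bos} applies, and doing so uniformly in the finite but $\delta$-dependent family of relevant horizontal characters. This is precisely where both the different growth rates hypothesis and the sharp lower bound $t^{k_i}\log t\prec a_i(t)$ enter in an essential way; if either were removed, the dominance would break down and the obstruction produced by Theorem~\ref{T:GT} could persist on a positive-density set of~$m$.
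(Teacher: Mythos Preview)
Your proposal is correct and follows essentially the same route as the paper: the two-scale average in $R$, Taylor expansion to order $k_i$, Theorem~\ref{T:GT} on $X^\ell$, reading off the top-degree coefficient of the obstructing horizontal character, and then Theorem~\ref{T:Bos} to control the density of ``bad'' $m$. The only cosmetic differences are that the paper introduces an auxiliary threshold $\varepsilon$ (splitting into $\|\cdot\|\le\varepsilon$ versus $\|\cdot\|>\varepsilon$) before sending $R\to\infty$, whereas you bound the bad density directly by $C(\delta)/R$, and that your invocation of the irrationality of $\beta_{i_0}$ is not actually needed---$\beta_{i_0}\ne 0$ together with $\log t\prec c_{K_S}(R,t)\prec t$ already suffices for Theorem~\ref{T:Bos}.
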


\subsection{Proof of Proposition~\ref{P:B'}}
Since there is a key technical difference in the proofs of  Proposition~\ref{P:B'}
and Proposition~\ref{P:B}, we are going to give all the details.
We are going to  adapt the proof  technique of the model equidistribution result of Section~\ref{SS:model2}
to our particular non-Abelian setup.

\begin{proof}[Proof of Proposition~\ref{P:B'}]
For convenience of exposition we assume that $X_1=\cdots=X_\ell=X$,
the proof in the general case is similar.  Let $F\in C(X^{\ell})$
with zero integral. We want to show that
\begin{equation}\label{EE:main}
\lim_{N\to\infty} \E_{1\leq n\leq N} F
({b}^{a_1(n)}_1\Gamma,\ldots,{b}^{a_{\ell}(n)}_{\ell}\Gamma)=0.
\end{equation}
For every fixed  $R\in \N$ we have
\begin{equation}\label{E:Rn+r''}
\E_{1\leq n\leq  RN} F({b}^{a_1(n)}_1\Gamma,\ldots,{b}^{a_{\ell}(n)}_{\ell}\Gamma)=
\E_{1\leq n\leq N} \big(\E_{1\leq r\leq R} \ F({b}^{a_1(nR+r)}_1\Gamma,\ldots,{b}^{a_{\ell}(nR+r)}_{\ell}\Gamma)\big)
+o_{N\to\infty;R}(1).
\end{equation}
For $n=1,2,\ldots$, we use the Taylor  expansion of the functions
$a_i(t)$ around the point $t=Rn$. Since $t^{k_i}\log{t}\prec a_i(t)\prec t^{k_i+1}$ for some $k_i\in\N$, Lemma~\ref{L:properties} gives that $a^{(k_i+1)}_i(t)\to 0$. Hence,
for $r\in [1, R]$ we have that
$$
a_i(Rn+r)=p_{i,R,n}(r)+o_{n\to\infty;R}(1),
$$
where
\begin{equation}\label{E:pirn}
p_{i,R,n}(r)=a_i(Rn)+ra'_i(Rn)+\cdots+\frac{r^{k_i}}{k_i!}a^{(k_i)}_i(Rn).
\end{equation}
   It follows that  the averages  in \eqref{E:Rn+r''} are equal to
\begin{equation}\label{E:Rn''}
\E_{1\leq n\leq N} A_{R,n} +o_{N\to\infty;R}(1), \text{ where }
A_{R,n}=\E_{1\leq r\leq R}\ F(b^{p_{1,R,n}(r)}_1\Gamma,\ldots,b^{p_{\ell,R,n}(r)}_{\ell}\Gamma).
\end{equation}
Our objective now is to show that for every $\delta>0$, for all large values of  $R$, the finite sequence
$(b^{p_{1,R,n}(r)}_1,\ldots,b^{p_{\ell,R,n}(r)}_\ell)_{1\leq r\leq R}$ is $\delta$-equidistributed in $X^\ell$
for most values of $n$. This will enable us to show that the averages in \eqref{E:Rn''} converge
to zero as $N\to \infty$.

So let $\delta>0$.  As in the proof of Proposition~\ref{P:B} we verify that for fixed $R,n\in\N$ the sequence
 $(b^{p_{1,R,n}(r)}_1,\ldots,b^{p_{\ell,R,n}(r)}_\ell)_{r\in\N}$ is a polynomial sequence in $G^k$.
 Since $X^{\ell}=G^{\ell}/\Gamma^{\ell}$, and $G^{\ell}$ is  connected and
simply connected, we can apply  Theorem~\ref{T:GT} (for small  $\delta$). We get that
if the finite sequence $(b^{p_{1,R,n}(r)}_1\Gamma,\ldots,b^{p_{\ell,R,n}(r)}_{\ell}\Gamma)_{1\leq r\leq R}$ is not $\delta$-equidistributed in $X^\ell$, then there exists
 a constant $M$ (depending only on $\delta$, $X$,  and  the $k_i$'s), and a non-trivial  horizontal character $\chi$ of $X^\ell$, with $\norm{\chi}\leq M$, and such that
\begin{equation}\label{E:C_R'}
\norm{\chi(b_1^{p_{1,R,n}(r)},\ldots, b_{\ell}^{p_{\ell,R,n}(r)})}_{C^\infty[R]}\leq M.
\end{equation}
For $i=1,\ldots,\ell$, let $\pi(b_i)=(\beta_{i,1}\Z,\ldots,\beta_{i,s}\Z)$, where $\beta_{i,j}\in \R$,  be the projection of $b_i$ on the horizontal torus $\T^s$ of $X$
(notice that
$s$ is bounded by the dimension of $X$).
Since each $b_i$ acts ergodically on $X$,
 the set of real numbers $\{1,\beta_{i,1},\ldots,\beta_{i,s}\}$  is rationally independent for
 $i=1,\ldots,\ell$.
    For $t\in \R$ we have $\pi(b_i^t)=(t\tilde{\beta}_{i,1}\Z,\ldots, t\tilde{\beta}_{i,s}\Z)$ for some
 $\tilde{\beta}_{i,j}\in \R$ with  $\tilde{\beta}_{i,j}\Z=\beta_{i,j} \Z$.
As a consequence
\begin{equation}\label{E:bve}
\chi(b_1^{p_{1,R,n}(r)},\ldots, b_{\ell}^{p_{\ell,R,n}(r)})=e\Big(\sum_{i=1}^s \big(p_{i,R,n}(n)\sum_{j=1}^s l_{i,j}\tilde{\beta}_{i,j}\big)\Big)
\end{equation}
for some $l_{i,j}\in\Z$ with $|l_{i,j}|\leq M$.
 Let $k_{\text{min}}=\min\{k_1,\ldots,k_{\ell}\}$
 and $k_{\text{max}}=\max\{k_1,\ldots,k_{\ell}\}$. It follows from \eqref{E:pirn}, \eqref{E:bve},
  and the definition of
  $\norm{\cdot}_{C^\infty[R]}$ (see \eqref{E:norms}), that there exists $k\in \N$ with
 $k_{\text{min}}\leq k\leq k_{\text{max}}$ such that
$$
\norm{\chi(b_1^{p_{1,R,n}(r)},\ldots, b_{\ell}^{p_{\ell,R,n}(r)})}_{C^\infty[R]}\geq R^k
\Big\|\sum_{i\in I} a_i^{(k)}(Rn)\beta_i\Big\|,
$$
where the sum ranges over those $i\in\{1,\ldots,\ell\}$ that satisfy $k_i=k$, and
the $\beta_i$'s are  real numbers, not all of them zero
(we used here that $\chi$ is non-trivial and  the rational independence of the $\tilde{\beta}_{i,j}$'s),
that belong to the finite set
$$
B=\bigcup_{i=1}^\ell\Big\{\frac{1}{k!}\sum_{j=1}^s l_{i,j}\tilde{\beta}_{i,j}\colon |l_{i,j}|\leq M\Big\}.
$$
 Combining this estimate with \eqref{E:C_R'} gives
\begin{equation}\label{E:a'(Rn)'}
\Big\|\sum_{i\in I} a_i^{(k)}(Rn)\beta_i\Big\|\leq \frac{M}{R^k}
\end{equation}
for some $\beta_i\in B$.

We are now ready to estimate the average
$\E_{1\leq n\leq N} |A_{R,n}|$.
Given $\varepsilon>0$
we split  it  as follows
$$
\E_{1\leq n\leq N} |A_{R,n}|=\E_{1\leq n\leq N} ({\bf 1}_{S_{1,R,\varepsilon}}(n)\cdot  |A_{R,n}|)
+ \E_{1\leq n\leq  N} ({\bf 1}_{S_{2,R,\varepsilon}}(n)\cdot  |A_{R,n}|)=\Sigma_{1,R,N,\varepsilon}+\Sigma_{2,R,N,\varepsilon},
$$
where
$$
S_{1,R,\varepsilon}=\Big\{n\in \N\colon
\Big\| \sum_{i\in I} a_i^{(k)}(Rx)\beta_i\Big\|\leq \varepsilon \text{ for some }
\beta_i\in B, \text{ not all of them } 0\Big\}, \ S_{2,R,\varepsilon}=\N\setminus S_{1,R,\varepsilon}.
$$

We estimate $\Sigma_{1,R,N,\varepsilon}$. Using Lemma~\ref{L:properties} and our assumptions, we
conclude that $\log{t}\prec a^{(k)}_i(t)\prec t$ for $i\in I$. Furthermore,
since the functions $a_i(t)$ for $i\in I$  have different growth rates and belong to the same Hardy field,
 we deduce that
the functions $a^{(k)}_i(t)$ for $i\in I$
have different growth rates. It follows that
$$
\log t\prec b_R(t)=\sum_{i\in I} a_i^{(k)}(Rt) \beta_i\prec t.
$$
 Since $b_R\in \H$  and $\log{t} \prec b_R(t)\prec t$, we get (e.g. using Theorem~\ref{T:Bos}) that  for every $R\in \N$ the sequence $(b_R(n)\Z)_{n\in\N}$ is equidistributed in $\T$. Hence,
$$
\frac{|1\leq n\leq N\colon \norm{b_R(n)}\leq \varepsilon|}{N}=2\varepsilon+o_{N\to\infty;R}(1).
$$
It follows that
  $$
  |\Sigma_{1,R,N,\varepsilon}|\leq 2\norm{F}_\infty \varepsilon+o_{N\to\infty;R}(1).
  $$

We estimate $\Sigma_{2,R,N,\varepsilon}$. Notice that for $n\in S_{2,R,\varepsilon}$ we have
$\norm{\sum_{i\in I} a_i^{(k)}(Rn)\beta_i}\geq\varepsilon$. As a result,
 if $R$ is large enough, then  \eqref{E:a'(Rn)'} fails, and as a consequence the finite sequence
$(b_1^{p_{1,R,n}(r)}x_1,\ldots, b_{\ell}^{p_{\ell,R,n}(r)}x_{\ell})_{1\leq r\leq R}$ is  $\delta$-equidistributed in $X$. Hence, if $R$ is large enough, then
$|A_{R,n}|\leq \delta$ for every  $n\in S_{2,R,\varepsilon}$.
Therefore, for every $N\in\N$ we have
$$
|\Sigma_{2,R,N,\varepsilon}|\leq \delta +o_{R\to\infty}(1).
$$

Putting the previous estimates together we find
$$
\E_{1\leq n\leq N} |A_{R,n}|\leq 2\norm{F}_\infty \varepsilon+\delta+ o_{N\to\infty;R}(1) +o_{R\to\infty}(1).
$$
Letting $N\to\infty$, then $R\to \infty$, and then $\varepsilon,\delta \to 0$, we deduce that
$$
\lim_{R\to\infty}\limsup_{N\to\infty}\E_{1\leq n\leq N} |A_{R,n}|=0,
$$
or equivalently that
$$
\lim_{R\to\infty}\limsup_{N\to\infty}\E_{1\leq n\leq N}\Big|
\E_{1\leq r\leq
R}F(b_1^{[a_1(Rn+r)]}\Gamma,\ldots,b_\ell^{[a_\ell(Rn+r)]}\Gamma)\Big|=0.
$$
Combining this with \eqref{E:Rn+r''} gives \eqref{EE:main}, completing the proof.
\end{proof}

\section{Random  sequences of sub-exponential growth}\label{S:random}
In this section we shall prove Theorem~\ref{T:random}. In what follows,
when we introduce   a nilpotent Lie group $G$ or a nilmanifold $X$,  we assume that it comes equipped
with a Mal'cev basis
and the corresponding (right invariant) metric $d_G$ or $d_X$  that was introduced in \cite{GT}.
When there is no danger of confusion we are going to denote $d_G$ or $d_X$ with $d$.
We denote by $B_M$ the ball in $G$ of radius $M$, that is
  $B_M=\{g\in G\colon d(g,\text{id}_G)\leq M\}.
  $


\subsection{A reduction}

We start with  some initial maneuvers that will allow us  to reduce Theorem~\ref{T:random} to
a more convenient statement.

We remind the reader of our setup. We are given
a sequence  $(X_n(\omega))_{n\in\N}$ of $0-1$ valued independent random
 variables  with  $P(\{\omega\in \Omega\colon X_n(\omega)=1\})=\sigma_n$, where
 $(\sigma_n)_{n\in\N}$ is a decreasing sequence of real numbers
 satisfying $\lim_{n\to \infty} n\sigma_n=\infty$.
Our objective is to show that
almost surely the averages
\begin{equation}\label{E:Av0}
\frac{1}{N}\sum_{n=1}^NF(b^{a_n(\omega)}\Gamma)
\end{equation}
converge as $N\to \infty$ for every nilmanifold $X=G/\Gamma$, function  $F\in C(X)$, and element $b\in G$.

We  caution the reader  that  the set of probability $1$ for which
the averages \eqref{E:Av0} converge has
to be independent of the nilmanifold $X=G/\Gamma$, the function $F\in C(X)$, and the
element  $b\in G$. On the other hand, since
up to isomorphism there exist  countably many
nilmanifolds $X$ (see for example \cite{CG}), and since the space $C(X)$ is separable, it suffices to
prove that for every fixed nilmanifold $X=G/\Gamma$ and $F\in C(X)$  the averages \eqref{E:Av0}
converge almost surely for every  $b\in G$. Furthermore, since $G$ is a countable union
of balls, it suffices to verify the previous statement  with $B_M$ in place of $G$ for every $M>0$.

Next notice that instead of working with the averages \eqref{E:Av0}, it suffices to work with  the averages
$$
\frac{1}{A(N,\omega)}\sum_{n=1}^N X_n(\omega)\ \! F(b^{n}\Gamma)
$$
where $A(N,\omega)=|\{n\in \{1,\ldots, N\} \colon  X_n(\omega)=1\}|$.
Since the expectation of $X_n$ is $\sigma_n$,  by the strong law of large numbers we
almost surely have that  $A(N,\omega)/w(N)\to 1,$ where $w(N)=\sum_{n=1}^N\sigma_n$.
It  therefore  suffices to work with the averages
$$
\frac{1}{w(N)}\sum_{n=1}^N X_n(\omega)\ \! F(b^{n}\Gamma).
$$
We shall establish convergence of these  averages
by comparing them with the averages
$$
\frac{1}{w(N)}\sum_{n=1}^N \sigma_n\ \! F(b^{n}\Gamma).
$$
Notice that these last averages can be compared with the averages
$$
\frac{1}{N}\sum_{n=1}^N F(b^{n}\Gamma)
$$
which, as we have mentioned repeatedly before,  are known to be convergent.

Up to this point we have reduced matters to showing that for every nilmanifold $X=G/\Gamma$,  $F\in C(X)$, and $M>0$,
we almost surely have
\begin{equation}\label{E:nilsequence}
\lim_{N\to \infty} \frac{1}{w(N)}\sum_{n=1}^N (X_n(\omega)-\sigma_n)\ \!F(b^n\Gamma)=0
\end{equation}
 for every $b\in B_M$.

Next we  show that we can impose a few extra assumptions on the nilmanifold $X$, and the function $F\in C(X)$.
 Using the lifting argument of Section~\ref{SS:nil} we see that every sequence
 $(F(b^n\Gamma))_{n\in\N}$ can be represented in the form
  $(\tilde{F}(\tilde{b}^n\tilde{\Gamma}))_{n\in\N}$
 for some nilmanifold $\tilde{X}=\tilde{G}/\tilde{\Gamma}$, with $\tilde{G}$
 connected and simply connected, $\tilde{F}\in C(\tilde{X})$, and $\tilde{b}\in \tilde{G}$.
 Therefore,  when proving \eqref{E:nilsequence}
 we can assume that the nilmanifold $X$ has the form $G/\Gamma$, where the  group
 $G$ is connected and simply connected. Furthermore, since the set $\text{Lip}(X)$,  of Lipschitz functions
 $F\colon X\to \C$,  is
 dense in $C(X)$ in the uniform topology,
 an easy approximation argument shows that it suffices to prove
 \eqref{E:nilsequence} for $F\in \text{Lip}(X)$.

Summarizing, we have reduced Theorem~\ref{T:random} to proving:
\begin{theorem}\label{T:nilsequence}
Let $(X_n(\omega))_{n\in\N}$ be a  sequence of $0-1$ valued independent random
 variables  with  $P(\{\omega\in \Omega\colon X_n(\omega)=1\})=\sigma_n$, where
 $(\sigma_n)_{n\in\N}$ is a decreasing sequence of real numbers
 satisfying $\lim_{n\to \infty} n\sigma_n=\infty$. Let $X=G/\Gamma$ be a nilmanifold,
  with $G$ connected and simply connected,   $F\in \text{Lip}(X)$,
   and $M>0$.

 Then almost surely we have
$$
\lim_{N\to\infty}\max_{b\in B_M} \left|\frac{1}{w(N)}\sum_{n=1}^N (X_n(\omega)-\sigma_n)\ \!F(b^n\Gamma)
\right| \quad=0
$$
where $w(N)=\sum_{n=1}^N\sigma_n$.
\end{theorem}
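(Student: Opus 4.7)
The plan is to adapt Bourgain's concentration-plus-$\varepsilon$-net strategy to this non-Abelian setting. Fix $X=G/\Gamma$, $F\in\text{Lip}(X)$, $M>0$, write $L=\|F\|_{\text{Lip}(X)}$, and set
$$S_N(\omega,b)=\sum_{n=1}^N\big(X_n(\omega)-\sigma_n\big)F(b^n\Gamma).$$
For each fixed $b\in B_M$ the summands are independent, centred, and bounded by $L$, so Hoeffding's (or Bennett's) inequality yields
$$P\big(|S_N(\omega,b)|>\tfrac{1}{2}\varepsilon\,w(N)\big)\le 2\exp\big(-c\varepsilon^{2}w(N)/L^{2}\big)$$
for an absolute constant $c>0$. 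The remaining task is to upgrade this single-$b$ tail bound to one on $\max_{b\in B_M}|S_N(\omega,b)|$, by discretising $B_M$ with a fine net and taking a union bound.

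The quantitative input needed is a deterministic modulus-of-continuity estimate for $b\mapsto F(b^n\Gamma)$ on $B_M$. A standard feature of the right-invariant Mal'cev metric on a connected and simply connected nilpotent Lie group of step $s$ is a polynomial bound
$$d(b_1^n,b_2^n)\le C_M\,n^{s}\,d(b_1,b_2)\qquad(b_1,b_2\in B_M),$$
which, combined with the Lipschitz bound on $F$ and summation over $n\le N$, produces $|S_N(\omega,b)-S_N(\omega,b')|\le C_M L N^{s+1}d(b,b')$. Choosing an $\eta_N$-net $\mathcal{N}_N$ of $B_M$ with $\eta_N=\varepsilon w(N)/(2C_M L N^{s+1})$ forces the discretisation error below $\tfrac{1}{2}\varepsilon w(N)$; since balls in $B_M$ satisfy a polynomial covering estimate, one has $|\mathcal{N}_N|\le CN^{A}$ for some $A$ depending only on $X$, $M$, $F$, $\varepsilon$. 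A union bound now gives
$$P\big(\max_{b\in B_M}|S_N(\omega,b)|>\varepsilon\,w(N)\big)\le CN^{A}\exp\big(-c\varepsilon^{2}w(N)/L^{2}\big).$$

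The hypothesis $n\sigma_n\to\infty$ forces $w(N)/\log N\to\infty$: given $K$ there is $n_0$ with $\sigma_n\ge K/n$ for $n\ge n_0$, whence $w(N)\ge K\sum_{n=n_0}^{N}1/n\ge (K/2)\log N$ for large $N$. Hence for $N$ large enough the right-hand side above is $\le N^{-2}$, which is summable. Borel--Cantelli, applied for each $\varepsilon=1/k$ and with the resulting null sets intersected, delivers the conclusion. The step I expect to be most delicate is the polynomial metric estimate $d(b_1^n,b_2^n)\ll_M n^{s}d(b_1,b_2)$: its exponent and its dependence on $M$ must be extracted carefully from the Mal'cev framework of \cite{GT}. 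Everything else is classical concentration combined with a covering argument, and the whole scheme hinges on absorbing the polynomial loss from the net using the logarithmic lower bound on $w(N)$.
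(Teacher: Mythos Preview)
Your approach is correct and follows the same overall architecture as the paper's proof: both arguments rest on the polynomial estimate $d(b_1^n,b_2^n)\ll_{G,M} n^{k}d(b_1,b_2)$ (which the paper proves as Lemma~\ref{L:g^n} from the Mal'cev coordinate description of multiplication), both pass to a polynomial-sized net in $B_M$, and both absorb the polynomial loss using $w(N)/\log N\to\infty$. The difference lies only in the concentration step: the paper follows Bourgain and bounds the $L^{\log N}(\Omega)$ norm of the maximum (using that $|B_{N,M}|^{1/\log N}$ is bounded, together with a moment estimate from \cite{RW} and a limsup lemma), whereas you use an exponential tail bound plus a union bound plus Borel--Cantelli. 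Your route is arguably more transparent and avoids the auxiliary limsup lemma.

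One point deserves care: the bound you write, $P(|S_N|>\tfrac12\varepsilon w(N))\le 2\exp(-c\varepsilon^2 w(N)/L^2)$, does \emph{not} follow from Hoeffding's inequality. Hoeffding only sees the range of the summands and would give $\exp(-c\varepsilon^2 w(N)^2/(NL^2))$, which is useless when $w(N)$ grows slowly (e.g.\ $\sigma_n=(\log n)/n$ gives $w(N)\asymp(\log N)^2\ll\sqrt{N\log N}$). You need Bernstein's or Bennett's inequality, which exploits the variance bound $\sum_n\operatorname{Var}\big((X_n-\sigma_n)F(b^n\Gamma)\big)\le L^2 w(N)$; this does give the stated exponent. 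Since you already mention Bennett parenthetically, this is a matter of phrasing rather than a gap, but you should drop ``Hoeffding'' and state explicitly which variance-sensitive inequality you invoke.
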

\begin{remark}
We shall not use the fact that the  convergence to zero is uniform; only the independence of the set of full measure
on the set $B_M$ will be used.
\end{remark}
To prove Theorem~\ref{T:nilsequence} we are going to extend an argument  used
by Bourgain in \cite{Bo} (where the case  $X=\T$ was covered).
A more detailed version of this argument can be found in \cite{RW}.
  Since several steps of \cite{RW} carry over verbatim  to our case we are only going to spell
 out the details of the genuinely  new steps.

\subsection{A key ingredient}
 In this subsection we shall prove the following key result:
\begin{proposition} \label{P:random}
 Let $X=G/\Gamma$ be a  nilmanifold, with $G$ connected and simply connected,
    and $M$ be a positive real number.

 Then  there exists $k=k(G,M)\in \N$ with the following property:
 for every $N\in\N$, there exists  $B_{N,M}\subset B_M$ with  $|B_{N,M}|=N^k$,
 and such that  for every  $F\in \text{Lip}(X)$ with $\norm{F}_{\text{Lip}(X)}\leq 1$,
  and sequence  of real numbers $(c_n)_{n\in\N}$ with norm bounded by $1$ , we have
\begin{equation}\label{E:approximation}
\max_{b\in B_M}\Big|\sum_{n=1}^N c_n\ \! F(b^n\Gamma)\Big|=
\max_{b\in B_{N,M}}\Big|\sum_{n=1}^N c_n\ \! F(b^n\Gamma)\Big|+o_{N\to\infty}(1).
\end{equation}
\end{proposition}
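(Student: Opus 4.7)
The plan is a straightforward discretisation argument. The function
$$
\Phi_{F,c}(b)=\sum_{n=1}^N c_n\,F(b^n\Gamma)
$$
is continuous on the compact ball $B_M$, so its maximum is attained and can be approximated by its values on any sufficiently fine net. What we need is a quantitative modulus of continuity for $\Phi_{F,c}$ that is polynomial in $N$, so that a net of polynomial size in $N$ suffices. Concretely, I would aim to show that
\begin{equation*}
|\Phi_{F,c}(b_1)-\Phi_{F,c}(b_2)|\ll_M N\cdot \max_{1\le n\le N}d_X(b_1^n\Gamma,b_2^n\Gamma),
\end{equation*}
and then that
\begin{equation*}
d_X(b_1^n\Gamma,b_2^n\Gamma)\le d_G(b_1^n,b_2^n)\ll_M n^{s}\,d_G(b_1,b_2),
\end{equation*}
where $s$ is the nilpotency step of $G$.

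The first estimate is immediate from $\|F\|_{\mathrm{Lip}(X)}\le 1$, $|c_n|\le 1$, and the triangle inequality. The second estimate (the main technical point) rests on the fact that in Mal'cev coordinates on a connected simply connected nilpotent Lie group of step $s$, the map $g\mapsto g^n$ is polynomial of degree at most $s$ in $n$ and the coordinates of $g$. Using the Mal'cev basis and the associated metric introduced by Green and Tao, a term-by-term comparison of the coordinates of $b_1^n$ and $b_2^n$, or alternatively an application of the Baker--Campbell--Hausdorff formula to $b_1^n(b_2^n)^{-1}=\exp(nX_1)\exp(-nX_2)$ (where $b_i=\exp X_i$), yields the claimed bound with the implied constant depending only on $M$ and $G$. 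This is the step I expect to be the main obstacle; everything else is bookkeeping.

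With these two ingredients at hand, choose $B_{N,M}$ to be a maximal $N^{-(s+2)}$-separated subset of $B_M$. Standard volume comparison in the Mal'cev coordinates gives $|B_{N,M}|\ll_{G,M} N^{(s+2)\dim G}$, so by enlarging this set arbitrarily inside $B_M$ we may assume $|B_{N,M}|=N^k$ for any fixed integer $k\ge (s+2)\dim G$ once $N$ is large (the small $N$ can be absorbed by an $o_{N\to\infty}(1)$ slack). For every $b\in B_M$ there exists $b'\in B_{N,M}$ with $d_G(b,b')\le N^{-(s+2)}$, and then for $1\le n\le N$,
\begin{equation*}
d_X(b^n\Gamma,(b')^n\Gamma)\ll_M n^{s}\,N^{-(s+2)}\le N^{-2}.
\end{equation*}
Consequently
\begin{equation*}
\bigl|\Phi_{F,c}(b)-\Phi_{F,c}(b')\bigr|\ll_M N\cdot N^{-2}=N^{-1},
\end{equation*}
uniformly in $b\in B_M$, $F$ with $\|F\|_{\mathrm{Lip}(X)}\le 1$, and $(c_n)$ with $|c_n|\le 1$. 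Taking maxima over $b\in B_M$ and $b\in B_{N,M}$ respectively gives \eqref{E:approximation}, since $B_{N,M}\subset B_M$ yields the trivial inequality in one direction and the net approximation yields it in the other, with error $o_{N\to\infty}(1)$.
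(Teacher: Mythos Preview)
Your approach is essentially the same as the paper's: build a polynomial-size net in $B_M$, use a polynomial bound on $d_G(b_1^n,b_2^n)$ in terms of $d_G(b_1,b_2)$, and conclude via the Lipschitz condition on $F$. The paper proves the powering estimate (your ``main obstacle'') exactly as you suggest, by working in Mal'cev coordinates and using the polynomial form of multiplication, combined with the Green--Tao comparison lemma between $d_G$ and the coordinate sup-norm.

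One small point: the exponent $s$ (the nilpotency step) that you claim for $d_G(b_1^n,b_2^n)\ll_M n^s d_G(b_1,b_2)$ is too optimistic with the Green--Tao metric. The coordinate estimate $|u^n-v^n|\ll_{G,M} n^{k_1}|u-v|$ does hold with $k_1$ bounded by the step, but converting back to $d_G$ via the comparison lemma costs an additional polynomial factor, since $|u^n|$ itself grows like $n^{k_1}$ and the comparison constants scale polynomially in that bound. The paper accordingly only claims some $k=k(G)$ rather than $s$. This does not affect your argument: replace $s$ by this $k$ throughout, choose the net of scale $N^{-(k+2)}$, and everything goes through. Your volume-comparison cardinality bound and the paper's direct coordinate-net construction are interchangeable here.
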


The proof of this result ultimately relies on the fact that
multiplication on a nilpotent Lie group
is  given by polynomial mappings.
To make this precise we shall use a convenient coordinate system,
the proof of its existence can be found  in \cite{GT} (for example).

 For every connected and simply connected Lie group $G$
there exist a non-negative integer $m$ (we call $m$ the  \emph{dimension} of $G$)
and a continuous isomorphism
$\phi$ from $(G,\cdot)$ to $(\R^m,\cdot)$ with multiplication defined as follows:
If   $u=(u_1,\ldots, u_m)$ and $v=(v_1,\ldots,v_m)$, then for $i=1,\ldots,m$ the $i$-th coordinate of
 $u\cdot v$ has the form
$$
u_i+v_i+P_i(u_1,\ldots,u_{i-1},v_1,\ldots, v_{i-1})
 $$
 where $P_i\colon \R^{i-1}\times \R^{i-1} \to \R$ is a polynomial of degree at most $i$.
  It follows that
 the $i$-th coordinate of $u^n$ has the form
 $$
 nu_i+Q_i(u_1, \ldots, u_{i-1},n)
 $$
where $Q_i\colon \R^{i-1}\times \R\to \R$ is a polynomial.




 We shall use the following result (Lemma A.4 in \cite{GT}):
 \begin{lemma}[{\bf Green \& Tao \cite{GT}}]\label{L:GT}
 Let $G$ be a connected and simply connected nilpotent Lie group of dimension $m$.

 Then  there  exists $k=k(G)\in \N$ such that for every $K>1$ we have
$$
   K^{-k}|u-v|\leq d(g,h) \leq K^k|u-v|,
  $$
for every $g,h\in G$, and  $u=\phi(g), v=\phi(h)\in \R^m$ that satisfy  $|u|$, $|v|\leq K$,
where $|\cdot|$ denotes the sup-norm in $\R^m$.
 \end{lemma}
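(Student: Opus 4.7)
The plan is to use right-invariance of $d$ to reduce the bi-Lipschitz comparison to the case where the second point is the identity, and then exploit the polynomial nature of multiplication in Mal'cev coordinates to absorb all distortion into polynomial factors of $K$.

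First, by right-invariance, $d(g,h) = d(gh^{-1},\text{id}_G)$. In Mal'cev coordinates the product $gh^{-1}$ takes the form $\phi(gh^{-1}) = P(u,v)$, where $P\colon\R^m\times\R^m\to\R^m$ is polynomial of degree at most $D = D(G)$, arising from the explicit polynomial formulas for multiplication and inversion recorded just above the lemma (together with the fact that inversion can be solved triangularly from the multiplication formula). Two observations are crucial: (i) $P(v,v)=0$, since $hh^{-1}=\text{id}_G$; (ii) writing $g = (gh^{-1})\cdot h$ gives $u = \tilde{P}(\phi(gh^{-1}),v)$, where $\tilde{P}$ is the multiplication polynomial and $\tilde{P}(0,v)=v$. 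From (i), applying the multivariate mean-value theorem to $t\mapsto P(v+t(u-v),v)$ yields
$$
|P(u,v)|\;\leq\; C(G)\,K^{D-1}\,|u-v|
$$
whenever $|u|,|v|\leq K$, since the partial derivatives of $P$ in the first argument are polynomials of bounded degree evaluated at points with coordinates of absolute value at most $K$. In the same regime one has $|P(u,v)|\ll K^{D}$, so (ii) combined with an analogous mean-value estimate applied to $\tilde{P}$ produces
$$
|u-v|\;\leq\; C(G)\,K^{D'}\,|\phi(gh^{-1})|
$$
for some $D'=D'(G)$.

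It then remains to pass between $|\phi(x)|$ and $d(x,\text{id}_G)$ for $x=gh^{-1}$, which by the previous step satisfies $|\phi(x)|\leq K^{D}$. On the unit ball $\{|\phi(x)|\leq 1\}$ the two quantities are comparable by the very construction of the Mal'cev-basis metric of \cite{GT}. For $1\leq |\phi(x)|\leq R$ I would factor $x$ as a product in $G$ of at most $\mathrm{poly}(R)$ near-identity factors $x_1,\ldots,x_N$ with $|\phi(x_i)|\leq 1$, using the polynomial multiplication formulas in coordinates to construct the factors explicitly; right-invariance plus the triangle inequality then give $d(x,\text{id}_G) \leq \sum_i d(x_i,\text{id}_G) \leq C\,R^{c_1}\,|\phi(x)|$, and the reverse inequality $|\phi(x)|\leq C\,R^{c_2}\,d(x,\text{id}_G)$ follows by a parallel chaining argument that tracks how coordinates accumulate under products of near-identity elements. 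Combining with the second paragraph and taking $R = K^{D}$ produces the asserted bi-Lipschitz bound with a constant of the form $K^k$ for some $k=k(G)\in\N$ independent of $K$.

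The main obstacle will be this last step: producing an explicit polynomial-length factorization of an arbitrary bounded-coordinate element of $G$ into near-identity factors whose coordinates remain controlled throughout, so that both $d$ and $|\phi|$ can be estimated along the factorization. This is essentially a structural feature of the Mal'cev-basis metric recorded in \cite{GT}, so one expects the needed factorization to be available directly from the construction of $d_G$ there; otherwise one would have to set it up from scratch using the lower-central-series decomposition and iterated commutator identities, which is where all the technical labor would concentrate.
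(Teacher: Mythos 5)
There is no internal proof to compare against: the paper does not prove Lemma~\ref{L:GT} at all, but imports it verbatim as Lemma~A.4 of Green and Tao \cite{GT}. So your proposal can only be judged as a standalone argument, and as such it has a genuine gap at its crux. Your opening reduction (right-invariance to pass to $d(gh^{-1},\text{id}_G)$, plus mean-value estimates on the multiplication polynomials giving $|\phi(gh^{-1})|\ll_G K^{O(1)}|u-v|$ and the converse bound) is sound. But the remaining step --- that $|\phi(x)|$ and $d(x,\text{id}_G)$ are comparable on a ball of radius $R=K^{D}$ up to factors polynomial in $R$ --- is not an auxiliary fact: it is precisely the lemma in the special case $h=\text{id}_G$, $v=0$, from which the general case follows by your own first paragraph. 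Deferring it to ``a structural feature of the Mal'cev-basis metric recorded in \cite{GT}'' is therefore circular in this context; you are assuming the essential content of what is to be proved.

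Moreover, the part you do sketch addresses only the easy direction. In \cite{GT} the metric $d_G$ is defined as an infimum over chains of steps measured by the coordinate norm of the quotient, so the upper bound $d(g,h)\leq K^{k}|u-v|$ follows from a one-step chain together with your polynomial estimate --- no factorization into near-identity pieces is needed. The delicate direction is the lower bound $K^{-k}|u-v|\leq d(g,h)$: one must show that \emph{every} chain joining $g$ to $h$, including chains that wander, has total length at least a polynomial-in-$K$ fraction of $|u-v|$, which requires controlling how much a single small step can move Mal'cev coordinates at points whose coordinates are themselves only polynomially bounded, and handling chains that leave the bounded region. Your ``parallel chaining argument that tracks how coordinates accumulate'' is exactly where all of this work lives, and since the proposal never states (or uses) the actual definition of $d_G$ --- on which the truth of the statement depends --- none of it can be carried out or verified as written. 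The honest resolution here is the paper's own: cite Lemma~A.4 of \cite{GT} rather than reprove it.
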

 Using this, we are going to  show:
 \begin{lemma}\label{L:g^n}
 Let $G$ be a connected and simply connected  nilpotent Lie group.

Then   there exists $k=k(G)\in \N$ such that for every $M>0$ we have
$$
d(g^n,h^n)\ll_{G,M}  n^k d(g,h)
$$
for every $n\in \N$ and  $g, h\in B_M$.
 \end{lemma}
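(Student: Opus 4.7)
The plan is to work in the global coordinate system $\phi\colon G\to \R^m$ described just above the statement, where multiplication and the power map are given by polynomial formulas, and then invoke the bi-Lipschitz comparison of Lemma~\ref{L:GT} twice: once to pass from $d$ to the sup-norm $|\cdot|$ on $\R^m$, and once to pass back after the polynomial computation.

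First I would reduce everything to coordinates. Writing $u=\phi(g)$ and $v=\phi(h)$, the assumption $g,h\in B_M$ gives $d(g,\mathrm{id}_G),d(h,\mathrm{id}_G)\leq M$, so applying Lemma~\ref{L:GT} with $K=\max(M,1)$ yields $|u|,|v|\leq C_1$ for some $C_1=C_1(G,M)$, and moreover $|u-v|\leq C_1 \, d(g,h)$. The next step is to exploit the polynomial description of the power map: the $i$-th coordinate of $g^n$ is $nu_i+Q_i(u_1,\dots,u_{i-1},n)$. Since $|u|\leq C_1$ and each $Q_i$ is a polynomial, this immediately gives a bound $|\phi(g^n)|\leq C_2 n^D$ for some $D=D(G)$ and $C_2=C_2(G,M)$, and the same for $h^n$.

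The core computation is to estimate coordinate-by-coordinate the difference
\[
\phi(g^n)_i-\phi(h^n)_i \;=\; n(u_i-v_i)\,+\,\bigl(Q_i(u_1,\dots,u_{i-1},n)-Q_i(v_1,\dots,v_{i-1},n)\bigr).
\]
The first term is $\leq n|u-v|$. For the second term I would apply the mean value theorem along the segment from $(v_1,\dots,v_{i-1})$ to $(u_1,\dots,u_{i-1})$; the partial derivatives of the polynomial $Q_i(\cdot,n)$ are themselves polynomials in the spatial coordinates (which are bounded by $C_1$) and in $n$, and hence are bounded by $C_3 n^{D'}$ for some $D'=D'(G)$. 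This yields $|\phi(g^n)_i-\phi(h^n)_i|\leq C_4 n^{D''}|u-v|$, and taking the maximum over $i$ gives $|\phi(g^n)-\phi(h^n)|\ll_{G,M} n^{D''}|u-v|$.

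Finally I would apply Lemma~\ref{L:GT} in the reverse direction with $K=\max(C_2 n^D,1)$, so that the hypothesis $|\phi(g^n)|,|\phi(h^n)|\leq K$ is satisfied; this gives $d(g^n,h^n)\leq K^k|\phi(g^n)-\phi(h^n)|\ll_{G,M} n^{kD+D''}|u-v|\ll_{G,M} n^{kD+D''}d(g,h)$. Absorbing the exponent into a single integer yields the claim. The only minor subtlety is bookkeeping the dependence of $K$ in Lemma~\ref{L:GT} on $n$ and making sure the resulting $K^k$ factor stays polynomial in $n$, but this is harmless since $K$ itself is polynomial in $n$.
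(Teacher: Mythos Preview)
Your proposal is correct and follows essentially the same route as the paper: pass to Mal'cev coordinates via Lemma~\ref{L:GT}, bound $|\phi(g^n)-\phi(h^n)|$ polynomially in $n$ using the polynomial form of the power map, then apply Lemma~\ref{L:GT} in reverse with $K$ a polynomial in $n$. The only cosmetic difference is that the paper factors the coordinate difference as $\sum_j (u_j-v_j)\,R_j(u,v,n)$ directly rather than invoking the mean value theorem, but the two arguments are interchangeable.
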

\begin{proof}
We first establish the corresponding estimate in  ``coordinates''. Suppose that the
dimension of $G$ is $m$.
Let $\phi(g)=u=(u_1,\ldots, u_m)$ and $\phi(h)=v=(v_1,\ldots,v_m)$   satisfy
$|u|, |v|\leq K$.
Using the multiplication formula in local coordinates we deduce that
 $$
 |(u^n)_i-(v^n)_i|\leq \sum_{j=1}^i |(u_j-v_j)|\ \! |R_j(u_1,\ldots,u_{j-1},v_1,\ldots, v_{j-1}, n) |
 $$
 for some polynomials $R_j\colon \R^{j-1} \times \R^{j-1}\times \R \to \R$ of degree of degree depending only on
 $G$.
If we consider $R_j$   as a polynomial of a single variable $n$,
then  its coefficients depend polynomially on  the parameters $u_i, v_i$
 (which are bounded by $K$) and the structure constants of the Mal'cev basis of  $G$. Hence,
 $
 |R_j(u,v,n)|\ll_{G,K}  n^{l_j}
 $
 for some $l_j=l_j(G)\in \N$. It follows that
 $$
 |(u^n)_i-(v^n)_i|\ll_{G,K}  n^{k_1} \sum_{j=1}^i |(u_j-v_j)|
 $$
 for some $k_1=k_1(G)$. As a consequence
 \begin{equation}\label{E:local}
 |u^n-v^n|\ll_{G,K} n^{k_1} |u-v|
 \end{equation}
 for some $k_1=k_1(G)$. 

To finish the proof, we use \eqref{E:local} to deduce an analogous estimate for elements of $G$ with the metric $d$.
We argue as follows.
First, using  Lemma~\ref{L:GT} we conclude that  if $g\in B_M$,
then $|u|\ll_{G,M} 1$. As a result, \eqref{E:local} gives that
 \begin{equation}\label{E:ooo1}
|u^n|\ll_{G,M} n^{k_1}
\end{equation}
 for every $g\in B_M$.
Next, notice that by Lemma~\ref{L:GT}     there exists $k_2=k_2(G)\in \N$
 such that for every $K>1$ we have
\begin{equation}\label{E:ooo2}
   K^{-k_2}|u-v|\leq d(g,h) \leq K^{k_2}|u-v|
   \end{equation}
for every $g,h\in G$  that satisfy  $|u|$, $|v|\leq K$ (remember that $u=\phi(g), v=\phi(h)\in \R^m$).
 Combining the estimates \eqref{E:local}, \eqref{E:ooo1}, and \eqref{E:ooo2}  we get
$$
 d(g^n,h^n)\ll_{G,M} n^{k_1k_2} |u^n-v^n|\ll_{G,M} n^{k_1+k_1k_2} |u-v|\ll_{G,M} n^{k_1+k_1k_2}d(g,h).
 $$
This establishes the advertised estimate with $k=k_1+k_1k_2$.
\end{proof}

\begin{proof}[Proof of Proposition~\ref{P:random}]
By Lemma~\ref{L:g^n} we get
that there exists $k_1=k_1(G)$ such that
\begin{equation}\label{L:g^nGamma}
d_G(g^n,h^n)\ll_{G,M} n^{k_1} d_G(g,h)
\end{equation}
 for every $g,h\in B_M$.
For every $K\in \N$ there exist
 $K^{m}$ points that form  an $1/K$-net for the set $[0,1)^m$ with
the  sup-norm. Combining this with    Lemma~\ref{L:GT} we get
that there exists $k_2=k_2(G)$ with the following property: for every $N\in \N$  there exists an  $1/N^{k_1+2}$
net of $B_M$ consisting of $N^{k_2}$ points.

Let $B_{N,M}$ be any such   $1/N^{k_1+2}$-net of $B_M$.
By construction,  $|B_{N,M}|=N^{k_2}$
for some  $k_2$ that depends only on $G$.
Furthermore, for every  $b\in B_M$
there exists $b_N\in B_{N,M}$ such that
 $d_G(b,b_N)\leq 1/N^{k_1+2}$.
 It follows from  \eqref{L:g^nGamma} that
$$\max_{1\leq n\leq N}d_G(b^n,b^n_N)\ll_{G,M} N^{k_1} d_G(b,b_N)\leq 1/N^2.$$
Therefore,
$$\max_{1\leq n\leq N}d_X(b^n\Gamma,b^n_N\Gamma)\ll_{G,M}  1/N^2.$$
Using this,
we deduce  \eqref{E:approximation} (with $o_{N\to\infty}(1)=\norm{c_n}_\infty\norm{F}_{\text{Lip}(X)}/N\leq 1/N$), completing the proof.
\end{proof}

\subsection{Proof of Theorem~\ref{T:nilsequence}}
We give a sketch of the proof of Theorem~\ref{T:nilsequence}. The missing details
 can be  extracted  from \cite{RW}.

Without loss of generality we can assume that  $\norm{F}_{\text{Lip}(X)}\leq 1$.

From Proposition~\ref{P:random} we conclude that there exists a $k=k(G,M)\in \N$ and a subset $B_{N,M}$ of $B_M$ with $|B_{N,M}|=N^k$ such that
  \begin{equation}\label{E:approximation'}
\max_{b\in B_M}\Big|\frac{1}{w(N)}\sum_{n=1}^N (X_n(\omega)-\sigma_n)
\ \! F(b^n\Gamma)\Big|= \max_{b\in B_{N,M}}\Big|\frac{1}{w(N)}\sum_{n=1}^N
(X_n(\omega)-\sigma_n)\ \! F(b^n\Gamma)\Big|+o_{N\to\infty}(1).
\end{equation}

Since the cardinality of $B_{N,M}$ is a power of $N$ that depends only on $G$ and $M$,
it follows that $|B_{N,M}|^{1/\log N}$ is bounded  by some  constant
that depends only on $G$ and $M$. Hence,
\begin{align}\label{E:hv1}
\norm{\max_{b\in B_{N,M}}\Big|\frac{1}{w(N)}\sum_{n=1}^N (X_n(\omega)-\sigma_n)\ \! F(b^n\Gamma)\Big|}_{L^{\log{N}}(\Omega)}
 &\ll_{G,M}\\ \notag \max_{b\in B_{N,M}} & \norm{\frac{1}{w(N)}\sum_{n=1}^N (X_n(\omega)-\sigma_n)\ \! F(b^n\Gamma)}_{L^{\log{N}}(\Omega)}.
\end{align}
Furthermore, arguing exactly as in \cite{RW} (pages 40-41), it can be  shown that for every sequence of complex numbers $(c_n)_{n\in\N}$ with $\norm{c_n}_\infty\leq 1$
one has
\begin{equation}\label{E:hv2}
\norm{\frac{1}{w(N)}\sum_{n=1}^N (X_n(\omega)-\sigma_n)\ \! c_n}_{L^{\log{N}}(\Omega)}
\ll
 \sqrt{\frac{\log{N}}{w(N)}}.
\end{equation}
Combining  \eqref{E:approximation'}, \eqref{E:hv1}, and \eqref{E:hv2} (with $c_n=F(b^n\Gamma)$), gives
\begin{equation}\label{E:hv3}
\norm{\max_{b\in B_M}\Big|\frac{1}{w(N)}\sum_{n=1}^N (X_n(\omega)-\sigma_n)\ \! F(b^n\Gamma)\Big|}_{L^{\log{N}}(\Omega)}
\ll_{G,M} \sqrt{\frac{\log{N}}{w(N)}}+o_{N\to\infty}(1).
\end{equation}

Next we  make use of the following simple lemma:
\begin{lemma}
Let $(Y_k)_{k\in\N}$ be a sequence of bounded, complex-valued random variables
 on a probability space $(\Omega,\Sigma, P)$.

Then almost surely we have
$$
\limsup_{k\to \infty} \frac{|Y_k(\omega)|}{\norm{Y_k}_{L^{\log{k}}(\Omega)}} \leq e
$$
where $e$ is the Euler number.
\end{lemma}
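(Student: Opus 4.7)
The plan is to apply Markov's inequality with the $L^{\log k}$ norm and then invoke the first Borel--Cantelli lemma. First, I would fix $\varepsilon>0$ and consider the event
$$
E_{k,\varepsilon}=\bigl\{\omega\in \Omega\colon |Y_k(\omega)|>(e+\varepsilon)\,\norm{Y_k}_{L^{\log{k}}(\Omega)}\bigr\}.
$$
By Markov's inequality applied to the non-negative random variable $|Y_k|^{\log k}$,
$$
P(E_{k,\varepsilon})\leq \frac{\E[|Y_k|^{\log k}]}{\bigl((e+\varepsilon)\norm{Y_k}_{L^{\log k}(\Omega)}\bigr)^{\log k}}=\frac{1}{(e+\varepsilon)^{\log k}}=\frac{1}{k^{\log(e+\varepsilon)}}.
$$

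Next, since $\log(e)=1$ and the logarithm is strictly increasing, we have $\log(e+\varepsilon)>1$ whenever $\varepsilon>0$. Consequently the series $\sum_{k\geq 2} k^{-\log(e+\varepsilon)}$ converges, so $\sum_k P(E_{k,\varepsilon})<\infty$. The Borel--Cantelli lemma then yields $P(E_{k,\varepsilon} \text{ i.o.})=0$, i.e.\ almost surely only finitely many of the $E_{k,\varepsilon}$ occur, which gives
$$
\limsup_{k\to\infty}\frac{|Y_k(\omega)|}{\norm{Y_k}_{L^{\log{k}}(\Omega)}}\leq e+\varepsilon \quad\text{almost surely.}
$$

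To conclude, I would apply this for $\varepsilon=1/n$ and intersect the corresponding full-measure sets over $n\in \N$: the resulting set still has full measure, and on it the limsup is bounded above by $e+1/n$ for every $n$, hence by $e$. The only point worth a line of care is ensuring that Markov's inequality can indeed be applied in this form when $\norm{Y_k}_{L^{\log k}(\Omega)}=0$ (in which case $Y_k=0$ almost surely and the ratio can be defined to be $0$) and that the exponent $\log k$ is at least $1$ for $k\geq 3$, which is harmless since we only care about the tail. No substantial obstacle is expected; the lemma is a routine consequence of the Borel--Cantelli lemma, with the key numerical observation being $\log(e+\varepsilon)>1$.
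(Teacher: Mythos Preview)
Your argument is correct and is exactly the standard proof one would expect: Markov's inequality at exponent $\log k$ followed by Borel--Cantelli, with the key arithmetic being $(e+\varepsilon)^{-\log k}=k^{-\log(e+\varepsilon)}$ and $\log(e+\varepsilon)>1$. The paper itself does not supply a proof of this lemma at all---it is stated as a ``simple lemma'' and used immediately---so there is nothing to compare against; your write-up fills in precisely the routine verification the author left to the reader.
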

 Combining this lemma with \eqref{E:hv3}, we conclude that for  every nilmanifold $X=G/\Gamma$, with $G$ connected and
 simply connected,  and $F\in \text{Lip}(X)$  with $\norm{F}_{\text{Lip}(X)}\leq 1$,
  there exists a set $\Omega_{F,G,M}$
 of probability $1$, such that for every $\omega\in \Omega_{F,G,M}$ we have
$$
\max_{b\in B_M}\Big|\frac{1}{w(N)}\sum_{n=1}^N (X_n(\omega)-\sigma_n)\ \! F(b^n\Gamma)
\Big|\ll_{\omega, G,M} \sqrt{\frac{\log{N}}{w(N)}}+o_{N\to\infty}(1).
$$
Since by assumption $\log{N}/w(N)\to 0$, we get  that for  every nilmanifold $X$,  $F\in \text{Lip}(X)$, and $M>0$,
we almost surely have
$$
\lim_{N\to \infty}\max_{b\in B_M}\Big|\frac{1}{w(N)}\sum_{n=1}^N (X_n(\omega)-\sigma_n)\ \! F(b^n\Gamma)\Big|=0.
$$
This completes the proof of Theorem~\ref{T:nilsequence}, and finishes the proof of Theorem~\ref{T:random}.






\end{document}